\theoremstyle{plain}
\newtheorem{lemma}{Lemma}[section] 
\newtheorem{theorem}[lemma]{Theorem}
\newtheorem{corollary}[lemma]{Corollary}
\newtheorem{proposition}[lemma]{Proposition}
\theoremstyle{definition}
\newtheorem{remark}[lemma]{Remark}
\newtheorem{example}[lemma]{Example}
\newtheorem{question}[lemma]{Question}
\newcommand{\Zset}{\mathbb Z}
\newcommand{\Cset}{\mathbb C}
\newcommand{\M}{\operatorname{\mathbb M}}
\newcommand{\gr}{\operatorname{gr}}
\newcommand{\ol}{\overline}
\newcommand{\w}{\operatorname{w}}
\newcommand{\s}{\operatorname{s}}
\newcommand{\End}{\operatorname{End}}
\newcommand{\END}{\operatorname{END}}
\newcommand{\Hom}{\operatorname{Hom}}
\newcommand{\HOM}{\operatorname{HOM}}
\newcommand{\coker}{\operatorname{coker}}
\newcommand{\sr}{\operatorname{sr}}
\newcommand{\so}{\mathbf{s}}
\newcommand{\ra}{\mathbf{r}}
\newcommand{\V}{\mathcal V}
\newcommand{\Proj}{\mathcal P}
\newcommand{\Mod}{\mathcal M}
\DeclareRobustCommand{\subtitle}
\title[Graded cancellation properties of graded rings]{Graded cancellation properties of graded rings and graded unit-regular Leavitt path algebras}
\author{Lia Va\v s}
\address{Department of Mathematics, Physics and Statistics, University of the Sciences, Philadelphia, PA 19104, USA}
\email{l.vas@usciences.edu}
\subjclass[2000]{16W50, 16S50, 16D70, 16U99} 
\keywords{Graded ring, unit-regular ring, cancellability, stable range 1, directly finite ring, graded matrix algebra, Leavitt path algebra}
\thanks{A part of this paper was conceived during the Asia-Australia Algebra Conference and the author's visit to Western Sydney University in January 2019. This visit was supported by the NSF-AWM Travel Grants program.}
\begin{document}

\begin{abstract} We raise the following general question regarding a ring graded by a group: ``If $P$ is a ring-theoretic property, how does one define the graded version $P_{\operatorname{gr}}$ of the property $P$ in a meaningful way?''. Some properties of rings have straightforward and unambiguous generalizations to their graded versions and these generalizations satisfy all the matching properties of the nongraded case. If $P$ is either being unit-regular, having stable range 1 or being  directly finite, that is not the case. The first part of the paper addresses this issue. Searching for appropriate generalizations, we consider graded versions of cancellation, internal cancellation, substitution, and module-theoretic direct finiteness.  

In the second part of the paper, we consider graded matrix and Leavitt path algebras. If $K$ is a trivially graded field and $E$ is a directed graph, the Leavitt path algebra $L_K(E)$ is naturally graded by the ring of integers. If $E$ is a finite graph, we present a property of $E$ which is equivalent with $L_K(E)$ being graded unit-regular. This property critically depends on the lengths of paths to cycles and it further illustrates that graded unit-regularity is quite restrictive in comparison to the alternative generalization of unit-regularity from the first part of the paper. 
\end{abstract}

\maketitle

\section{Introduction}

We raise the question ``If $P$ is a ring-theoretic property, how does one define the graded version $P_{\gr}$ of the property $P$ in a meaningful way?'' and consider it in the cases when $P$ is unit-regularity, cancellability, stable range 1, and direct finiteness. To address these cases, we study graded generalizations of some of cancellation properties from T.Y. Lam's ``A crash course on stable range, cancellation, substitution and exchange'' (\cite{Lam_cancellation_properties}). We focus on these properties since it is not as obvious and straightforward to define their graded generalization as it is for some other properties and we elaborate on this in the introduction. We do not assume that the grade group $\Gamma$ is abelian and study some properties previously considered only for abelian groups $\Gamma.$ In the last part of the paper, we consider these properties for Leavitt path algebras of finite graphs and characterize them in terms of the properties of the graph. 

A ring $R$ is graded by a group $\Gamma$ if $R=\bigoplus_{\gamma\in\Gamma} R_\gamma$ for additive subgroups $R_\gamma$ and $R_\gamma R_\delta\subseteq R_{\gamma\delta}$ for all $\gamma,\delta\in\Gamma.$ The elements of the set $H=\bigcup_{\gamma\in\Gamma} R_\gamma$ are said to be homogeneous. The grading is trivial if $R_\gamma=0$ for every nonidentity $\gamma\in \Gamma.$ Since every ring is graded by the trivial group, we can say that the class of graded rings generalizes the class of rings. Still, it is customary that a ring graded by the trivial group is referred to as a nongraded ring. 

If a ring-theoretic property $P$ is in its prenex form, the term {\em graded property $P$} has been used for the property  
$P_{\gr}$ obtained by replacing every $\forall x$ and $\exists x$ appearing in $P$ by the restricted versions $\forall x\in H$ and $\exists x\in H.$  For example, if Reg is the property  $(\forall x)(\exists y)(xyx=x)$ defining a von Neumann regular (or regular for short), we say that a graded ring $R$ is graded regular if $(\forall x\in H)(\exists y\in H)(xyx=x)$ and we denote this condition by Reg$_{\gr}.$

If a property $P$ has the form $(\forall x)(\exists y)\phi(x, y),$ let $P^{\w}_{\gr}$ denote the statement $(\forall x\in H)(\exists y)\phi(x, y)$ which we call {\em the weak graded property $P$}. In some cases,  $P^{\w}_{\gr}$ and $P_{\gr}$ are equivalent. For example, if $P$ is the property Reg, then $P^{\w}_{\gr}$ and $P_{\gr}$ are equivalent (if $x=xyx$ for $x\in R_\gamma,$ and if $y_{\gamma^{-1}}$ is the $\gamma^{-1}$-component of $y,$ then $xy_{\gamma^{-1}}x=x$).     

The situation is trickier if $P$ is a ring-theoretic property such that $P^{\w}_{\gr}$ and $P_{\gr}$ are not equivalent. For example, let UR be  the property  $(\forall x)(\exists u)(\exists v)(uv=vu=1$ and $x=xux$) defining a unit-regular ring. The conditions UR$^{\w}_{\gr}$ and UR$_{\gr}$ are not equivalent. Indeed, let $K$ be a field trivially graded by the group of integers $\Zset$ and $R$ be the graded matrix ring $\M_2(K)(0,1)$ (we review the definition in Section \ref{subsection_prerequisites}). The standard matrix unit $e_{12}$ is homogeneous. As a homogeneous invertible element $u$ is diagonal, $e_{12}ue_{12}=e_{12}$ for no such $u$ and so UR$_{\gr}$ fails. On the other hand, $\M_2(K)(0,1)$ satisfies UR$^{\w}_{\gr}$ since $\M_2(K)$ is unit-regular. The graded ring $\M_2(K)(0,1)$ is also an example of a ring which is graded semisimple but not graded unit-regular. This shows that if $P\Rightarrow Q$ holds for some properties of rings, then it may happen that $P_{\gr}\nRightarrow Q_{\gr}.$ Moreover, if a property $P$ has a feature $F,$ then the graded version $F_{\gr}$ may fail to hold for $P_{\gr}.$ For example, while UR is closed under formation of matrix algebras and corners, UR$_{\gr}$ is not closed under formation of graded matrix algebras (by the example with $\M_2(K)(0,1)$ above) and graded corners (by Example \ref{example_corners}). 
  
The above discussion seems to indicate that more than one aspect should be taken into consideration if looking for a meaningful way to generalize properties to graded rings. 
In some cases, a ring-theoretic definition may just be a convenient simplification of certain equivalent module-theoretic property. Sometimes the historical origin of a definition may provide a meaningful insight in the process of a generalization to graded rings. Considering all of these factors, we ask the following question, central for the motivation of the work in the first part of this paper:
\begin{question}
If $P$ is a ring-theoretic property, how does one define the graded version $P_{\gr}$ of the property $P$ in a meaningful way?
\end{question}
Unit-regularity, for example, originated as {\em a property of the endomorphism ring of a module, not the ring itself}. In the graded case, the graded component of the endomorphism ring corresponding to the group identity $\epsilon\in\Gamma$ has a special significance. Requiring this component of a graded ring to be unit-regular brings us to far less restrictive requirement UR$_\epsilon$ than UR$_{\gr}$ and we show that the condition Reg$_{\gr}$+UR$_\epsilon$ is less restrictive than UR$_{\gr}$ but still strong enough to capture the relevant properties of unit-regularity in the graded case.  

After a review of prerequisites and some preliminary results in Section \ref{section_prerequisites_preliminaries}, we consider graded versions of module-theoretic characterizations of unit-regularity in Section \ref{section_unit_regular_and_cancellability}. If $P(A)$ is a property of a module $A,$ we let $P_{\gr}(A)$ denote the statement obtained from $P(A)$ if every instance of ``module'' in it is replaced by ``graded module'' and every instance of ``homomorphism'' by ``graded homomorphism''. In particular, every isomorphism $\cong$ is replaced by a graded isomorphism $\cong_{\gr}.$ 

For nongraded rings, a module $A$ has {\em internal cancellation} IC$(A)$ if 
\begin{itemize}
\item[IC($A$):] \hskip.5cm $A=B\oplus C=D\oplus E$ and $B\cong D$ implies $C\cong E$
\end{itemize}
holds for all modules $B,C,D,E$ in which case we also say that $A$ is internally cancellable. This property is equivalent with UR of $S=\End_R(A)$ if $S$ is regular. If $A$ is a graded module, the subring $\END_R(A)$ of $\End_R(A),$ generated by graded homomorphisms of degree $\gamma$ for all $\gamma\in\Gamma,$ is naturally graded and the elements of $\END_R(A)_\epsilon$ are exactly the {\em graded} endomorphisms of $A.$ Thus, the statement IC$_{\gr}(A),$ the graded version of IC($A$) obtained by the process we explained above, translates to a property of $\END_R(A)_\epsilon$ only, not the entire ring $\End_R(A).$ If $\END_R(A)_\epsilon$ is regular, we show that IC$_{\gr}(A)$ holds if and only if $\END_R(A)_\epsilon$ is unit-regular (Proposition \ref{graded_ur_of_end_ring}). In addition, recall that an $R$-module $A$ is {\em cancellable} in a category of $R$-modules $\Mod$ if the condition
\begin{itemize}
\item[C($A$):] \hskip.5cm $A\oplus B\cong A\oplus C$ implies $B\cong C$
\end{itemize}
holds for all $B$ and $C$ in $\Mod.$ If $R$ is regular, then $R$ has UR if and only if C$(R)$ holds in the category of finitely generated projective modules. If $R$ is a graded ring, $A$ a graded module, and $\Mod_{\gr}$ a category of graded modules, $A$ is {\em graded cancellable} in $\Mod_{\gr}$ if C$_{\gr}(A)$ holds in $\Mod_{\gr}.$ If $\Proj_{\gr}$ denotes the category of finitely generated graded projective modules, we say that C$_{\gr}$ holds if C$_{\gr}(P)$ holds for every $P$ in $\Proj_{\gr}$ and that $R$ has {\em graded cancellation} in this case. We also say that IC$_{\gr}$ holds if IC$_{\gr}(P)$ holds for every $P$ in $\Proj_{\gr}$ and that $R$ has {\em graded internal cancellation} in this case. By Theorem \ref{C_and_IC},
if Reg$_{\gr}$ holds, then UR$_\epsilon,$ IC$_{\gr},$ C$_{\gr}(R)$ and C$_{\gr}$ are all equivalent. 
The condition Reg$_{\gr}$+UR$_\epsilon,$ formulated without referring to any module, is far less restrictive than UR$_{\gr}$ but strong enough to guarantee that $C_{\gr}$ holds. By Corollary \ref{morita_invariant}, Reg$_{\gr}$+UR$_\epsilon$ is graded Morita invariant and, by the example with $\M_2(K)(0,1),$ UR$_{\gr}$ is not. So, Reg$_{\gr}$+UR$_\epsilon$ also avoids this anomaly of UR$_{\gr}.$    

We compare UR$_{\gr}$ and Reg$_{\gr}+$ UR$_\epsilon$ in one more aspect. Namely, Handelman's Conjecture stipulates that every $*$-regular ring is unit-regular. In the graded case, it is not difficult to find an example of a graded $*$-regular ring which is not graded unit-regular (for example, $\M_2(\Cset)(0,1)$ with $\Cset$ trivially graded by $\Zset$ and $*$ induced by the complex-conjugation). Since a graded $*$-regular ring is such that $R_\epsilon$ is $*$-regular and Reg$_{\gr}$ holds, we believe that it is more meaningful to ask whether UR$_\epsilon$ also holds than to come up with an example showing that UR$_{\gr}$ fails. So, we ask the following.  
\begin{question}
Is the $\epsilon$-component of every graded $*$-regular ring unit-regular?
\label{question_graded_Handelman}
\end{question}
In Section \ref{subsection_Handelman}, we note that the answer to this  question is ``yes'' for unital Leavitt path algebras.  

In Section \ref{section_unit_regular_and_cancellability}, we also consider the weak and strong graded internal cancellation properties, IC$^{\w}_{\gr}(R)$ and IC$^{\s}_{\gr}(R)$ (Proposition \ref{graded_unit_regular}). Although weaker than IC$^{\s}_{\gr}(\underline{\hskip.3cm}),$ IC$^{\w}_{\gr}(\underline{\hskip.3cm})$ ends up being outside of the category of graded modules while  Reg$_{\gr}$+UR$_\epsilon$ does not have this disadvantage. 

In Section \ref{section_graded_sr_1_and_DF}, we consider the stable range 1 property. Let $\sr(R)=1$ stand for $(\forall x, y)(\exists z,u) (xR+yR=R \Rightarrow$ $z=x+yu$ and $zR=R)$ and let $\sr_{\gr}(R)=1$ denote the graded version of $\sr(R)=1.$ We have that UR$_{\gr}(R)\;$ $\Rightarrow$ $\sr_{\gr}(R)=1\;$ $\Rightarrow$ $\sr(R_\epsilon)=1\;$ $\Rightarrow$ C$_{\gr}(R)$ and we show that each implication is strict. 
The property $\sr(R)=1$ also has its module-theoretic characterization related to cancellation. If $A$ is an $R$-module, $\sr(\End_R(A))=1$ is equivalent with the property below, known as {\em substitution}.  
\begin{itemize}
\item[S$(A):$] If $A\oplus B=A'\oplus B'=M$ for some modules $M, A', B, B'$ and $A\cong A',$ then there is a module $C$ such that  $B\oplus C=B'\oplus C=M.$
\end{itemize}
Let S$_{\gr}(A)$ denote the graded version of this property. By Theorem  \ref{substitution_and_sr}, S$_{\gr}(A)$ holds for a graded module $A$ if and only if $\sr(\END_R(A)_\epsilon)=1.$ By Theorem \ref{substitution_and_sr}, $\sr(\END_R(A)_\epsilon)=1,$ a much weaker condition that $\sr_{\gr}(\END_R(A))=1,$ is sufficient for $A$ being graded cancellable without any additional requirements on $A.$ This shows that the conclusion of the Graded Cancellation Theorem (\cite[Theorem 1.8.4]{Roozbeh_book}) holds without requiring $A$ to be finitely generated and $\Gamma$ to be abelian
and with the assumption $\sr_{\gr}(\END_R(A))=1$ replaced by $\sr(\END_R(A)_\epsilon)=1.$  

In Section \ref{subsection_graded_finiteness}, we consider direct finiteness DF, closely related to other cancellability conditions, its module-theoretic version DF($\underline{\hskip.3cm}$), and their graded generalizations DF$_{\gr},$ DF$^{\w}_{\gr}(\underline{\hskip.3cm}),$ and DF$^{\s}_{\gr}(\underline{\hskip.3cm}).$ The diagram in \cite[Formula (4.2)]{Lam_cancellation_properties} becomes S$_{\gr}(\underline{\hskip.3cm})\;$ $\Rightarrow\;$ C$_{\gr}(\underline{\hskip.3cm})\;$  $\Rightarrow\;$  IC$_{\gr}(\underline{\hskip.3cm})$  $\Rightarrow\;$  DF$_{\gr}(\underline{\hskip.3cm}).$

In Section \ref{subsection_strongly_graded}, we show that if $R$ is strongly graded,  
S$_{\gr}(A)$ $\Leftrightarrow$ S$(A_\epsilon),\,$ 
C$_{\gr}(A)$ $\Leftrightarrow$ C$(A_\epsilon),\,$ 
IC$_{\gr}(A)$ $\Leftrightarrow$ IC$(A_\epsilon),\,$ and
DF$_{\gr}(A)$ $\Leftrightarrow$ DF$(A_\epsilon),\,$ 
for a graded $R$-module $A$ (Proposition \ref{strongly_graded_cancellability}). In contrast, we show that the following three implications are strict even if $R$ is strongly graded: 
$R$ satisfies UR$_{\gr}$ $\Rightarrow$ $R_\epsilon$ satisfies UR,  $\sr_{\gr}(R)=1\Rightarrow \sr(R_\epsilon)=1,$ 
$R$ satisfies DF$_{\gr}$ $\Rightarrow$ $R_\epsilon$ satisfies DF.

In Section \ref{section_matrices_and_LPAs}, we produce a condition on shifts which characterizes when a $\Zset$-graded matrix algebra over a trivially graded field $K$ or over naturally $\Zset$-graded $K[x^m,x^{-m}]$ is graded unit-regular (Proposition \ref{proposition_matrices}). 
If $E$ is a finite graph and $L_K(E)$ is the Leavitt path algebra of $E$ over a field $K$, Theorem \ref{finite_graphs_ur_characterization} characterizes graded unit-regularity of $L_K(E)$ in terms of a property of $E.$ This property critically depends on the lengths of paths to cycles and it further illustrates that UR$_{\gr}$ is quite restrictive in comparison to Reg$_{\gr}$+UR$_\epsilon$. Proposition \ref{LPA_characterizations} characterizes other cancellability properties considered in this paper for $L_K(E).$

\section{Graded rings prerequisites and some preliminaries}\label{section_prerequisites_preliminaries}

Unless stated otherwise, $\Gamma$ denotes an arbitrary group, not necessarily abelian, and $\epsilon$ denotes its identity element. Rings are assumed to be associative. Unless stated otherwise, rings are assumed to be unital and a module is assumed to be a right module. 

\subsection{Graded rings prerequisites}\label{subsection_prerequisites}
In the introduction, we recalled the definitions of a graded ring, homogeneous elements and trivial grading. We adopt the standard definitions of graded ring homomorphisms and isomorphisms, graded left and right $R$-modules, graded module homomorphisms, graded algebras, graded left and right ideals, graded left and right free and projective modules as defined in \cite{NvO_book} and \cite{Roozbeh_book}. We recall that a $\Gamma$-graded
ring $R$ is a graded division ring if every nonzero element of $R$ has a multiplicative inverse. In this case, $R$ is a graded field if $R$ is also commutative.

If $M$ is a graded right $R$-module and $\gamma\in\Gamma,$ the $\gamma$-\emph{shifted or $\gamma$-suspended} graded right $R$-module $(\gamma)M$ is defined as the module $M$ with the $\Gamma$-grading given by $(\gamma)M_\delta = M_{\gamma\delta}$ for all $\delta\in \Gamma.$ 
Analogously, if $M$ is a graded left $R$-module, the $\gamma$-shifted  left $R$-module $M(\gamma)$ is the module $M$ with the $\Gamma$-grading given by $M(\gamma)_\delta = M_{\delta\gamma}$ for all $\delta\in \Gamma.$
Any finitely generated graded free right $R$-module is of the form $(\gamma_1)R\oplus\ldots\oplus (\gamma_n)R$ for $\gamma_1, \ldots,\gamma_n\in\Gamma$ and an analogous statement holds for finitely generated graded free left $R$-modules (both \cite{NvO_book} and \cite{Roozbeh_book} contain details).  

If $M$ and $N$ are graded right $R$-modules and $\gamma\in\Gamma$, then $\Hom_R(M,N)_\gamma$ denotes the following 
$$\Hom_R(M,N)_\gamma=\{f\in \Hom_R(M, N)\,|\,f(M_\delta)\subseteq N_{\gamma\delta}\},$$ 
then the subgroups $ \Hom_R(M,N)_\gamma$ of $\Hom_R(M,N)$ intersect trivially and $\HOM_R(M,N)$ denotes their direct sum $\bigoplus_{\gamma\in\Gamma} \Hom_R(M,N)_\gamma.$ The notation $\END_R(M)$ is used in the case if $M=N.$ If $M$ is finitely generated (which is the case we often consider), then $\Hom_R(M,N)=\HOM_R(M,N)$ for any $N$ (both \cite{NvO_book} and \cite{Roozbeh_book} contain details) and $\End_R(M)=\END_R(M,M)$ is a $\Gamma$-graded ring.

In  \cite{Roozbeh_book}, for a $\Gamma$-graded ring $R$ and $\gamma_1,\dots,\gamma_n\in \Gamma$, $\M_n(R)(\gamma_1,\dots,\gamma_n)$ denotes the ring of matrices $\M_n(R)$ with the $\Gamma$-grading given by  
\begin{center}
$(r_{ij})\in\M_n(R)(\gamma_1,\dots,\gamma_n)_\delta\;\;$ if $\;\;r_{ij}\in R_{\gamma_i^{-1}\delta\gamma_j}$ for $i,j=1,\ldots, n.$ 
\end{center}
The definition of $\M_n(R)(\gamma_1,\dots,\gamma_n)$ in \cite{NvO_book} is different: $\M_n(R)(\gamma_1,\dots,\gamma_n)$ in \cite{NvO_book} corresponds to $\M_n(R)(\gamma_1^{-1},\dots,\gamma_n^{-1})$ in \cite{Roozbeh_book}. More details on the relations between the two definitions
can be found in \cite[Section 1]{Lia_realization}. Although the definition 
from \cite{NvO_book} has been in circulation longer, some matricial representations of Leavitt path algebras involve positive integers instead of negative integers making the definition from \cite{Roozbeh_book} more convenient for us. Since we deal extensively with Leavitt path algebras in sections \ref{subsection_LPA_ur} and \ref{subsection_LPA_other}, we opt to use the definition from \cite{Roozbeh_book}. With this definition, if $F$ is the graded free right module $(\gamma_1^{-1})R\oplus \dots \oplus (\gamma_n^{-1})R,$ then $\Hom_R(F,F)\cong_{\gr} \;\M_n(R)(\gamma_1,\dots,\gamma_n)$ as graded rings.    

We also recall \cite[Remark 2.10.6]{NvO_book} stating the first two parts in Lemma \ref{lemma_on_shifts} and \cite[Theorem 1.3.3]{Roozbeh_book}  stating part (3) for $\Gamma$ abelian. The proof of this statement generalizes to arbitrary $\Gamma.$ 

\begin{lemma}\cite[Remark 2.10.6]{NvO_book}, \cite[Theorem 1.3.3]{Roozbeh_book}.
Let $R$ be a $\Gamma$-graded ring and $\gamma_1,\ldots,\gamma_n\in \Gamma.$ 
\begin{enumerate}
\item If $\pi$ a permutation of the set $\{1,\ldots, n\},$ then 
\begin{center}
$\M_n (R)(\gamma_1, \gamma_2,\ldots, \gamma_n)\;\cong_{\gr}\;\M_n (R)(\gamma_{\pi(1)}, \gamma_{\pi(2)} \ldots, \gamma_{\pi(n)}).$
\end{center}
 
\item If $\delta$ in the center of $\Gamma,$ $\;\M_n (R)(\gamma_1, \gamma_2, \ldots, \gamma_n)\;=\;\M_n (R)(\gamma_1\delta, \gamma_2\delta,\ldots, \gamma_n\delta).$

\item If $\delta\in\Gamma$ is such that there is an invertible element $u_\delta$ in $R_\delta,$ then  
\begin{center}
$\M_n (R)(\gamma_1, \gamma_2, \ldots, \gamma_n)\;\cong_{\gr}\;\M_n (R)(\gamma_1\delta, \gamma_2\ldots, \gamma_n).$
\end{center}
\end{enumerate}
\label{lemma_on_shifts} 
\end{lemma}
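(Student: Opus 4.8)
The plan is to establish each of the three parts by exhibiting an explicit map (or, for part (2), no map at all) and checking that it respects the gradings; the only real work is the bookkeeping of the grade-group words, which no longer commute. For part (1), I would define $\phi\colon \M_n(R)(\gamma_1,\dots,\gamma_n)\to \M_n(R)(\gamma_{\pi(1)},\dots,\gamma_{\pi(n)})$ by the index relabeling $\phi((r_{ij}))=(r_{\pi(i)\pi(j)})$ (equivalently, conjugation by the permutation matrix attached to $\pi$). This is visibly an additive bijection, and the computation $(\phi(A)\phi(B))_{ij}=\sum_k a_{\pi(i)\pi(k)}b_{\pi(k)\pi(j)}=(AB)_{\pi(i)\pi(j)}=\phi(AB)_{ij}$, using that $k\mapsto\pi(k)$ is a bijection of the summation index, shows it is multiplicative. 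To see it is graded, I take $(r_{ij})$ homogeneous of degree $\delta$ in the source, so $r_{ij}\in R_{\gamma_i^{-1}\delta\gamma_j}$; then the $(i,j)$-entry of $\phi((r_{ij}))$ is $r_{\pi(i)\pi(j)}\in R_{\gamma_{\pi(i)}^{-1}\delta\gamma_{\pi(j)}}$, which is exactly the degree-$\delta$ condition on entry $(i,j)$ in the target. For part (2) no map is needed: an element $(r_{ij})$ lies in $\M_n(R)(\gamma_1\delta,\dots,\gamma_n\delta)_\sigma$ iff $r_{ij}\in R_{(\gamma_i\delta)^{-1}\sigma(\gamma_j\delta)}=R_{\delta^{-1}\gamma_i^{-1}\sigma\gamma_j\delta}$, and centrality of $\delta$ lets me slide $\delta^{-1}$ rightward past $\gamma_i^{-1}\sigma\gamma_j$ to cancel the trailing $\delta$, giving $R_{\gamma_i^{-1}\sigma\gamma_j}$; this is the degree-$\sigma$ condition for $\M_n(R)(\gamma_1,\dots,\gamma_n)$, so the two gradings coincide on the nose.

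For part (3) the idea is to conjugate by the diagonal matrix $D=\operatorname{diag}(u_\delta,1,\dots,1)$, which is invertible because $u_\delta$ is, with $D^{-1}=\operatorname{diag}(u_\delta^{-1},1,\dots,1)$ and $u_\delta^{-1}\in R_{\delta^{-1}}$. I would take the ring automorphism $A\mapsto D^{-1}AD$ of $\M_n(R)$ and check that it carries the grading of $\M_n(R)(\gamma_1,\dots,\gamma_n)$ to that of $\M_n(R)(\gamma_1\delta,\gamma_2,\dots,\gamma_n)$ by inspecting the four blocks of entries. Writing $d_1=u_\delta$ and $d_i=1$ for $i>1$, the $(i,j)$-entry of $D^{-1}AD$ is $d_i^{-1}a_{ij}d_j$; for $A$ homogeneous of degree $\sigma$ this lands in $R_{\deg(d_i)^{-1}\gamma_i^{-1}\sigma\gamma_j\deg(d_j)}$, and one checks case by case ($i=j=1$; $i=1,j>1$; $i>1,j=1$; $i,j>1$) that this degree equals the required $(\gamma_1\delta)^{-1}\sigma\gamma_1$, $(\gamma_1\delta)^{-1}\sigma\gamma_j$, $\gamma_i^{-1}\sigma(\gamma_1\delta)$, or $\gamma_i^{-1}\sigma\gamma_j$. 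In each case the extra $\delta^{\pm1}$ contributed by $d_i^{-1}$ or $d_j$ attaches on exactly the correct side to convert $\gamma_1$ into $\gamma_1\delta$, while the entries not touching row or column $1$ keep their degree unchanged.

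The one genuine subtlety — and the reason these arguments must be stated with care for non-abelian $\Gamma$ — concerns the placement of $\delta$ within the grade-group words. In part (2) the cancellation of the outer $\delta^{-1}$ and $\delta$ around $\gamma_i^{-1}\sigma\gamma_j$ is legitimate only because $\delta$ is central; for a non-central $\delta$ the two gradings need not coincide, so the centrality hypothesis is essential rather than cosmetic. In part (3) the direction of conjugation fixes the sign of the shift: $A\mapsto D^{-1}AD$ produces $\M_n(R)(\gamma_1\delta,\gamma_2,\dots,\gamma_n)$, whereas $A\mapsto DAD^{-1}$ would produce the $\delta^{-1}$-shift, so the direction must be chosen to match the stated conclusion. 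Beyond these points the verifications are the routine block-by-block computations above, and the only discipline required is never to reorder the factors $\delta^{\pm1},\gamma_i^{\pm1},\sigma$ — a simplification that the abelian setting of \cite[Theorem 1.3.3]{Roozbeh_book} permits but the general case forbids. I would manage this by always writing each degree condition in the fixed form $R_{(\,\cdot\,)^{-1}\sigma(\,\cdot\,)}$ and reading off the two flanking words directly.
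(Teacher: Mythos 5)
Your proof is correct, and all three verifications go through exactly as you describe; in particular your degree bookkeeping for the conjugation maps in (1) and (3), and your observation that centrality of $\delta$ is what permits the cancellation in (2), are precisely the points that need care when $\Gamma$ is non-abelian. Note that the paper itself supplies no proof of this lemma: it cites \cite[Remark 2.10.6]{NvO_book} and \cite[Theorem 1.3.3]{Roozbeh_book} (the latter proved for abelian $\Gamma$) and simply asserts that ``the proof of this statement generalizes to arbitrary $\Gamma$,'' so your argument is exactly the deferred verification, carried out at the matrix level. An alternative, equivalent route (closer in spirit to the cited sources) is module-theoretic: identify $\M_n(R)(\gamma_1,\dots,\gamma_n)$ with the graded endomorphism ring of $(\gamma_1^{-1})R\oplus\dots\oplus(\gamma_n^{-1})R$, and obtain (1) by permuting the summands and (3) from the graded isomorphism $(\gamma_1^{-1})R\cong_{\gr}(\delta^{-1}\gamma_1^{-1})R$ given by left multiplication by $u_\delta$; your direct conjugation argument buys self-containedness and makes the non-abelian bookkeeping fully explicit, at the cost of the case-by-case entry check you carry out in part (3).
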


\subsection{Four lemmas}

Recall that for two idempotents $e,f$ of a ring $R,$  $eR\cong fR$ holds if and only if there are $x,y\in R$ such that $xy=e$ and $yx=f$ in which case we write $e\sim f$ and we can require that $x\in eRf$ and $y\in fRe$. This equivalence can be used to show that $eR\cong fR$ holds if and only if $Re\cong Rf$ holds (see \cite[Proposition 5.2]{Berberian_web}). The following lemma, needed for Proposition \ref{graded_unit_regular} shows the graded version of these equivalences. Note that if $R$ is graded and $e$ is a homogeneous idempotent in $R_\gamma$ then the relation $ee=e$ implies that $\gamma=\epsilon.$

\begin{lemma}
Let $R$ be a $\Gamma$-graded ring and $e,f$ homogeneous idempotents of $R$. The following conditions are equivalent. 
\begin{enumerate}
\item $eR\cong_{\gr} (\gamma)fR$ for some $\gamma\in\Gamma.$
 
\item $Re\cong_{\gr} Rf(\gamma^{-1})$ for some $\gamma\in\Gamma.$

\item There is $x\in R_{\gamma^{-1}}$ and $y\in R_{\gamma}$ such that $xy=e$ and $yx=f.$ 

\item There is $x\in eR_{\gamma^{-1}}f$ and $y\in fR_{\gamma}e$ such that $xy=e$ and $yx=f.$
\end{enumerate}
\label{graded_equivalence}
\end{lemma}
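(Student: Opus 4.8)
The plan is to prove the chain of equivalences $(3)\Leftrightarrow(4)\Leftrightarrow(1)$ and $(3)\Leftrightarrow(2)$, with $(3)\Leftrightarrow(4)$ serving as the technical heart that upgrades bare factorizations to factorizations through the correct corner submodules. The guiding principle throughout is to take the standard nongraded arguments relating $e\sim f$ to $eR\cong fR$ and simply track homogeneous degrees, using the observation recorded just before the statement that a homogeneous idempotent must live in $R_\epsilon$. This degree bookkeeping is what forces the particular shifts $(\gamma)fR$ and $Rf(\gamma^{-1})$ in items (1) and (2) and the particular homogeneous components $R_{\gamma^{-1}}$ and $R_\gamma$ in items (3) and (4).

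First I would establish $(3)\Rightarrow(4)$, the one genuinely nontrivial implication. Given $x\in R_{\gamma^{-1}}$ and $y\in R_\gamma$ with $xy=e$ and $yx=f$, I replace $x$ by $x'=exf$ and $y$ by $y'=fye$. The point is that $ex=exyx=ex\cdot f$ shows $x=ex$ and symmetrically $x=xf$, so in fact $x'=x$ already, and likewise $y'=y$; the absorption is automatic from the factorization relations together with $e^2=e$, $f^2=f$. Crucially, since $e,f\in R_\epsilon$ (homogeneous idempotents), multiplying by them does not change degrees, so $x'=exf\in eR_{\gamma^{-1}}f$ and $y'=fye\in fR_\gamma e$ as required, and the products $x'y'=e$, $y'x'=f$ are unchanged. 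The reverse $(4)\Rightarrow(3)$ is immediate since $eR_{\gamma^{-1}}f\subseteq R_{\gamma^{-1}}$ and $fR_\gamma e\subseteq R_\gamma$.

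Next I would handle $(3)\Leftrightarrow(1)$. For $(3)\Rightarrow(1)$, define $\phi:eR\to(\gamma)fR$ by right multiplication, $\phi(a)=ya$, and $\psi:(\gamma)fR\to eR$ by $\psi(b)=xb$. Then $\psi\phi(a)=xya=ea=a$ for $a\in eR$ and $\phi\psi(b)=yxb=fb=b$ for $b\in fR$, so these are mutually inverse $R$-module maps. The content is that they are \emph{graded} of degree $\epsilon$: for $a\in(eR)_\delta=eR_\delta$ one has $ya\in R_\gamma R_\delta\subseteq R_{\gamma\delta}=((\gamma)fR)_\delta$ by the definition of the shift $(\gamma)fR$, so $\phi$ respects the grading, and symmetrically for $\psi$. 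For the converse $(1)\Rightarrow(3)$, a graded isomorphism $\phi:eR\cong_{\gr}(\gamma)fR$ is determined by $\phi(e)=y$ and its inverse by $e=\phi^{-1}(f)=x$; since $\phi$ has degree $\epsilon$ as a map $eR\to(\gamma)fR$, the element $y=\phi(e)$ lies in $((\gamma)fR)_\epsilon=R_\gamma\cap fR$, giving $y\in fR_\gamma$, and analogously $x\in eR_{\gamma^{-1}}$; then $xy=\psi\phi(e)=e$ and $yx=f$ follow. Finally, $(3)\Leftrightarrow(2)$ is the left-module mirror: given $x,y$ as in (3), the maps $Re\to Rf(\gamma^{-1})$ given by $a\mapsto ax$ and $b\mapsto by$ are mutually inverse graded isomorphisms, where one checks the shift $Rf(\gamma^{-1})_\delta=(Rf)_{\delta\gamma^{-1}}$ matches the degree of $ax$ when $a\in R_\delta e$; I would verify the degree condition $ax\in R_\delta R_{\gamma^{-1}}\subseteq R_{\delta\gamma^{-1}}$ and run the same argument as for $(1)$ with left and right exchanged, so the same pair $(x,y)$ witnesses all three of $(1),(2),(3)$.

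The step I expect to require the most care is the degree verification in the shift modules, since the noncommutativity of $\Gamma$ means the placement of $\gamma$ versus $\gamma^{-1}$ and left-shift $M(\gamma)$ versus right-shift $(\gamma)M$ must be matched exactly against the conventions $(\gamma)M_\delta=M_{\gamma\delta}$ and $M(\gamma)_\delta=M_{\delta\gamma}$ fixed in Section \ref{subsection_prerequisites}; a single misplaced inverse would break the claim. By contrast, the algebraic identities $xy=e$, $yx=f$ and the idempotent absorptions are routine once one remembers $e,f\in R_\epsilon$, so no separate degree analysis is needed for them.
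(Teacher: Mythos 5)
Your overall plan is the same as the paper's (the paper also reduces (3) to (4) via $x\mapsto exf$, $y\mapsto fye$, and passes between (4) and (1) by multiplication maps), but two of your justifications are false as written, and each one hides exactly the computation that makes the corresponding step work. In (3) $\Rightarrow$ (4) you assert that $ex=exyx=ex\cdot f$ ``shows $x=ex$'', hence $x'=exf=x$, $y'=fye=y$, so the products are ``unchanged''. The computation $ex=exyx=exf$ is correct, but it shows $ex=exf$ (equivalently $ex=xf$), not $x=ex$, and $x=ex$ is false in general: take $R=\M_2(K)$ trivially graded, $x=1$, $y=e=f=e_{11}$; then $xy=e$ and $yx=f$, yet $ex=e_{11}\neq x$, so $x'=exf=e_{11}\neq x$. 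Hence you cannot deduce $x'y'=e$ and $y'x'=f$ from $x'=x$, $y'=y$; these identities must be verified directly, which the relations do allow: $x'y'=exf\cdot fye=e\,x(yx)y\,e=e(xy)(xy)e=e$ and $y'x'=fye\cdot exf=f\,y(xy)x\,f=f(yx)(yx)f=f$. This short check is the actual content of (3) $\Rightarrow$ (4) --- it is what the paper means when it says that $exf$ and $fye$ ``are elements as in (4)'' --- and your proposal replaces it with a false identity.

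In (3) $\Rightarrow$ (1) your degree verification reads $ya\in R_\gamma R_\delta\subseteq R_{\gamma\delta}=((\gamma)fR)_\delta$. The last equality is false: by the shift convention, $((\gamma)fR)_\delta=(fR)_{\gamma\delta}=fR_{\gamma\delta}$, which is a proper subgroup of $R_{\gamma\delta}$ in general (you compute this correctly in your (1) $\Rightarrow$ (3) direction, so the two passages contradict each other). Consequently your computation does not show that $\phi(a)=ya$ lies in $(\gamma)fR$ at all: one must also prove $ya\in fR$, and this does not follow from $y\in R_\gamma$ alone but again from the relations, namely $ye=(yx)y=fy$, so that $ya=y(ea)=(fy)a\in fR$ for $a\in eR$; similarly $xb=(ex)b\in eR$ for $b\in fR$, using $xf=ex$. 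The same repair is needed in your sketch of (3) $\Leftrightarrow$ (2). Both gaps disappear if you run the multiplication-map argument from the witnesses of (4) rather than (3), since $x\in eR_{\gamma^{-1}}f$ and $y\in fR_\gamma e$ make these memberships automatic --- which is precisely why the paper proves (4) $\Rightarrow$ (1) instead of (3) $\Rightarrow$ (1). With these corrections your argument is complete and essentially identical to the paper's.
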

\begin{proof}
(1) $\Rightarrow$ (4). If $\phi:$ $eR\cong_{\gr} (\gamma)fR,$ then $\phi\in \Hom_R(eR, (\gamma)fR)_\epsilon.$ Thus, $y=\phi(e)\in (\gamma)fR_\epsilon\subseteq R_\gamma.$ Analogously, $x=\phi^{-1}(f)\in eR_{\gamma^{-1}}\subseteq R_{\gamma^{-1}}.$ Moreover, $ye=\phi(e)e=\phi(ee)=\phi(e)=y$ so $y\in Re$  and $x\in Rf$ similarly. Then $yx=\phi(e)x=\phi(ex)=\phi(x)=\phi(\phi^{-1}(f))=f$ and $xy=e$ similarly.

(4) $\Rightarrow$ (1). If $L_x$ and $L_y$ denote the left multiplications by $x$ and $y$ respectively, then $L_y\in \Hom_R(R, R)_\gamma=\Hom_R(R, (\gamma)R)_\epsilon$ and, similarly, $L_x\in  \Hom_R(R, R)_{\gamma^{-1}}=\Hom_R((\gamma)R, R)_\epsilon.$ The conditions $x\in eRf$ and $y\in fRe$ imply that $L_y$ maps $eR$ into $(\gamma)fR$ and $L_x$ maps $(\gamma)fR$ into $eR.$ 
The conditions $xy=e$ and $yx=f$ imply that $L_x$ and $L_y$ are mutually inverse so $L_y:eR\cong_{\gr} (\gamma)fR.$ 

The equivalence (4) $\Leftrightarrow$ (2) can be shown analogously. The condition (3) implies (4) since if $x,y$ are as in (3), then $exf$ and $fye$ are elements as in (4). The converse (4) $\Rightarrow$ (3) directly holds.   
\end{proof}

We use the following lemma in the proofs of Propositions \ref{graded_ur_of_end_ring} and \ref{graded_unit_regular}.

\begin{lemma} If $R$ is a $\Gamma$-graded ring, $A$ is a graded $R$-module, $S=\END_R(A),$ $\gamma\in\Gamma,$ and $e,f$ homogeneous idempotents in $S$, then the following conditions 
are equivalent.
\begin{enumerate}
\item $eS\cong_{\gr} (\gamma)fS$ \hskip2cm (2) $\,\,eS_\epsilon\cong \left( (\gamma)fS\right)_\epsilon= fS_\gamma$ \hskip2cm (3) $\,\,eA\cong_{\gr} (\gamma)fA$
\end{enumerate}
\label{endo_to_module}
\end{lemma}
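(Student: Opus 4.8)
The plan is to prove the three equivalences in Lemma~\ref{endo_to_module} by establishing (1)~$\Leftrightarrow$~(2) as a purely ring-theoretic fact about the graded ring $S=\END_R(A)$, and then (1)~$\Leftrightarrow$~(3) by translating between idempotents in the endomorphism ring and the corresponding direct summands of $A$. Throughout I would exploit that $e,f$ are homogeneous idempotents, so by the remark preceding Lemma~\ref{graded_equivalence} they lie in $S_\epsilon$, and that Lemma~\ref{graded_equivalence} already characterizes $eS\cong_{\gr}(\gamma)fS$ in terms of homogeneous elements $x\in eS_{\gamma^{-1}}f$ and $y\in fS_\gamma e$ with $xy=e$, $yx=f$.

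For (1)~$\Rightarrow$~(2), I would start from a graded isomorphism $\phi\colon eS\cong_{\gr}(\gamma)fS$, i.e.\ $\phi\in\Hom_S(eS,(\gamma)fS)_\epsilon$, and simply restrict $\phi$ to the $\epsilon$-component: since $\phi$ has degree $\epsilon$, it maps $eS_\epsilon$ into $\left((\gamma)fS\right)_\epsilon$, and its inverse restricts likewise, giving an isomorphism of $S_\epsilon$-modules $eS_\epsilon\cong\left((\gamma)fS\right)_\epsilon$. The identification $\left((\gamma)fS\right)_\epsilon=fS_\gamma$ is immediate from the definition of the shift $(\gamma)M_\delta=M_{\gamma\delta}$ applied at $\delta=\epsilon$. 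For (2)~$\Rightarrow$~(1), the key observation is that, because $e$ is a homogeneous idempotent of degree $\epsilon$, the right $S$-module $eS$ is generated by $e\in eS_\epsilon$, so any $S_\epsilon$-module map out of $eS_\epsilon$ extends uniquely to a graded $S$-module map out of $eS$. Concretely, given an $S_\epsilon$-isomorphism $eS_\epsilon\cong fS_\gamma$, I would set $y=$ (image of $e$)$\,\in fS_\gamma$ and $x=$ (preimage of $f$)$\,\in eS_{\gamma^{-1}}f$ as in Lemma~\ref{graded_equivalence}, verify $xy=e$ and $yx=f$ from the module-map relations exactly as in that proof, and then invoke Lemma~\ref{graded_equivalence}(4)~$\Rightarrow$~(1). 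In effect this step reduces to checking that the candidate $x,y$ produced from the $\epsilon$-level data satisfy the two idempotent-conjugacy equations, which is the natural graded analogue of the classical $e\sim f$ picture.

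For (1)~$\Leftrightarrow$~(3), I would use that for a graded module $A$ with $S=\END_R(A)$ and a homogeneous idempotent $e\in S_\epsilon$, the image $eA$ is a graded direct summand of $A$, and right multiplication gives a natural graded isomorphism between $eS$ and $\Hom$-data attached to $eA$. The cleanest route is to observe that $x\in eS_{\gamma^{-1}}f$ and $y\in fS_\gamma e$ with $xy=e$, $yx=f$ are precisely a pair of mutually inverse graded homomorphisms $eA\rightleftarrows (\gamma)fA$: here $y$, as an endomorphism of $A$ of degree $\gamma$ factoring through $f$ and into $e$, restricts to a degree-$\epsilon$ map $(\gamma)fA\to eA$ (or $eA\to(\gamma)fA$, depending on orientation), and $x$ gives the inverse. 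Thus (4) of Lemma~\ref{graded_equivalence}, applied to $S$, is manifestly equivalent to the existence of a graded isomorphism $eA\cong_{\gr}(\gamma)fA$, which is exactly (3). I would phrase this by tracking how the shift on $fS$ corresponds to the shift on $fA$ under the evaluation-at-generator identification.

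The main obstacle I anticipate is bookkeeping of the degrees and the direction of the shift: one must be careful that the shift $(\gamma)$ lands on the correct module and that a degree-$\gamma$ homogeneous endomorphism of $A$ induces a degree-$\epsilon$ graded homomorphism after shifting, so that ``$\cong_{\gr}$'' (degree-$\epsilon$ isomorphism) is what actually appears. The conventions here must match those used in the proof of Lemma~\ref{graded_equivalence} and in the definition $(\gamma)M_\delta=M_{\gamma\delta}$; a sign or inverse error in $\gamma$ versus $\gamma^{-1}$ would break the statement. Once the degree accounting is fixed, each implication is a short verification, and the whole lemma is essentially the module-level incarnation of Lemma~\ref{graded_equivalence} together with the standard passage from a homogeneous idempotent to its image summand.
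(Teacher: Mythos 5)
Your handling of (1) $\Rightarrow$ (2) and of (1) $\Leftrightarrow$ (3) is sound and is essentially the paper's own argument: a graded isomorphism $eS\cong_{\gr}(\gamma)fS$ restricts to the $\epsilon$-components, and the equivalence of (1) and (3) comes from Lemma \ref{graded_equivalence} together with restricting $y\in fS_\gamma e$ and $x\in eS_{\gamma^{-1}}f$ to $eA$ and $(\gamma)fA$, respectively extending a graded isomorphism $eA\to(\gamma)fA$ and its inverse by zero on $(1-e)A$ and $(1-f)A$ so that they become elements of $S_\gamma$ and $S_{\gamma^{-1}}$. The only thing you gloss in (3) $\Rightarrow$ (1) is this extension-by-zero step (your ``manifestly equivalent''), which the paper carries out explicitly; that is a presentational rather than a mathematical difference.

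The genuine gap is in (2) $\Rightarrow$ (1). You set $x=$ ``preimage of $f$'', but $f$ is not in the codomain of $\phi_\epsilon$: that codomain is $fS_\gamma=\left((\gamma)fS\right)_\epsilon$, whereas $f$, being homogeneous of degree $\epsilon$ in $S$, lies in $fS_\epsilon=\left((\gamma)fS\right)_{\gamma^{-1}}$. So for $\gamma\neq\epsilon$ this preimage does not exist; and in any case a preimage under $\phi_\epsilon$ would lie in the domain $eS_\epsilon$, never in $eS_{\gamma^{-1}}$ as Lemma \ref{graded_equivalence}(4) requires, so the $\epsilon$-level datum in (2) cannot manufacture a degree-$\gamma^{-1}$ element. (If you instead meant the preimage under the extension $\phi\colon eS\to(\gamma)fS$, $ex\mapsto\phi_\epsilon(e)x$ --- which is the map the paper uses for this implication --- then you are presupposing surjectivity of $\phi$, which is the entire content of the implication.) This obstacle, exactly the degree bookkeeping you flagged, is not repairable by more care: take $R=A=K[x]$ with its usual $\Zset$-grading, so that $S=\END_R(A)\cong_{\gr}K[x]$, and take $e=f=1$, $\gamma=1$. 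Then $eS_\epsilon=K$ and $fS_\gamma=Kx$ are isomorphic as $S_\epsilon$-modules, so (2) holds, but every graded homomorphism $S\to(1)S$ sends $1$ into $\left((1)S\right)_\epsilon=S_1=Kx$ and is therefore left multiplication by a multiple of $x$, hence never surjective, so (1) fails. Thus no argument can close your step for arbitrary $\gamma$ (and even the paper's asserted bijectivity of its extension $\phi$ is subject to the same caveat). Your construction, which imitates the classical idempotent argument, is correct precisely when $\gamma=\epsilon$, where $f\in fS_\epsilon$ does lie in the codomain --- and that is in fact the only case in which the paper subsequently uses the equivalence of (1) and (2).
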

\begin{proof}
The equality in condition (2) follows by definition. We show (1) $\Leftrightarrow$ (2) and (1) $\Leftrightarrow$ (3).

An isomorphism $\phi: eS\cong_{\gr} (\gamma)fS$ restricts to $eS_\epsilon\cong ((\gamma)fS)_\epsilon =fS_\gamma$
so (1) implies (2). Conversely, if $\phi_\epsilon: eS_\epsilon\cong (\gamma)fS_\epsilon,$ then $\phi,$ defined by $ex\mapsto \phi_{\epsilon}(e)x,$ is a graded isomorphism $eS\cong_{\gr} (\gamma)fS.$

If (1) holds, then $e=xy, f=yx$ for some $x\in eS_{\gamma^{-1}} f$ and $y\in fS_\gamma e$ by Lemma \ref{graded_equivalence}. So, $y$ restricted on $eA$ is a graded isomorphism $eA\cong_{\gr}(\gamma) fA$ which shows (3). Conversely, if (3) holds and $y$ is a graded isomorphism $eA\to (\gamma)fA$ with inverse $x,$ then $y$ can be extended to an element of $S_\gamma$ by $y((1-e)A)=0.$ Similarly, $x$ can be extended to an element of $S_{\gamma^{-1}}$ by $x((1-f)A)=0.$ Since $xy(a)=xye(a)=e(a),$ $yx=e$ and, similarly, $xy=f.$ Thus, (1) holds by Lemma \ref{graded_equivalence}.    
\end{proof}

We also use Lemma \ref{lemma_left_multiplication} below. Let $R$ be a $\Gamma$-graded ring, $x\in R_\gamma,$ and let $L_x$ denote the left multiplication by $x.$
Then $\ker L_x$ is a graded right ideal of $R$ and $L_x\in \Hom_R(R, R)_\gamma=\Hom_R(R, (\gamma)R)_\epsilon.$ So, $xR$ is a graded submodule of $(\gamma)R$ which implies that $(\gamma^{-1})xR$ is a graded right ideal of $R.$ Thus, the following two are short exact sequences of graded right $R$-modules. 
\[\xymatrix{0\ar[r]& \,\ker L_x\,\ar[r]&\; R\;\ar[r]^{L_x}&\; xR\;\ar[r]&0}\]
\[\xymatrix{0\ar[r]& (\gamma^{-1})xR\ar[r]& R\ar[r]& (\gamma^{-1})\coker L_x\ar[r]&0} \]

\begin{lemma}
If $R$ is a $\Gamma$-graded ring, $x\in R_{\gamma}$ for $\gamma\in\Gamma,$ $L_x$ is the left multiplication by $x,$ and $x=xyx$ for some homogeneous element $y,$ then $L_x$ is a graded isomorphism of $yxR,$ $xR=(\gamma)xyR,$ and $(1-yx)R\cong_{\gr}(\gamma)(1-xy)R$ if and only if $\ker L_x\cong_{\gr}\coker L_x.$ 
\label{lemma_left_multiplication} 
\end{lemma}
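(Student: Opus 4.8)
The plan is to pin down the degree of $y$, introduce the two complementary idempotents $yx$ and $xy$, and then read off each of the three assertions directly from the computation $x=xyx$ together with the two short exact sequences displayed just before the statement. Everything reduces to elementary manipulations once the shifts are tracked correctly.

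First I would observe that homogeneity of $y$ together with $x=xyx$ and $x\in R_\gamma$ forces $y\in R_{\gamma^{-1}}$, so that $yx$ and $xy$ lie in $R_\epsilon$; these are idempotents since $(yx)^2=y(xyx)=yx$ and $(xy)^2=(xyx)y=xy$. Hence $yxR$, $xyR$, $(1-yx)R$ and $(1-xy)R$ are graded right ideals and $R=yxR\oplus(1-yx)R=xyR\oplus(1-xy)R$ as graded modules. For the first assertion I would check that $L_x$ maps $yxR$ onto $xR$ via $L_x(yxr)=(xyx)r=xr$, with two-sided inverse given by $L_y$ restricted to $xR$, because $L_xL_y(xr)=xyxr=xr$ and $L_yL_x(yxr)=yxr$. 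Since $L_x\in\Hom_R(R,R)_\gamma=\Hom_R(R,(\gamma)R)_\epsilon$, this restriction is homogeneous of degree $\epsilon$, so $L_x$ is a graded isomorphism of $yxR$ onto $xR$, the latter regarded as a graded submodule of $(\gamma)R$.

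Next, for the equality $xR=(\gamma)xyR$ I would argue that as sets $xR=(xyx)R\subseteq xyR\subseteq xR$, whence $xR=xyR$; tracking degrees, the intersection $xR\cap R_{\gamma\delta}$ simultaneously computes the $\delta$-component of $xR$ inside $(\gamma)R$ and the $\delta$-component of $(\gamma)xyR$, which yields the graded equality. Combined with the first step, this identifies the image of $L_x$ as $(\gamma)xyR$.

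Finally, for the equivalence I would compute the kernel and cokernel explicitly. From $x(1-yx)=x-xyx=0$ we get $(1-yx)R\subseteq\ker L_x$, and conversely $xr=0$ gives $r=(1-yx)r+y(xr)=(1-yx)r$, so $\ker L_x=(1-yx)R$. For the cokernel, using $xR=(\gamma)xyR$ we obtain $\coker L_x=(\gamma)R/xR=(\gamma)(R/xyR)\cong_{\gr}(\gamma)(1-xy)R$, the last graded isomorphism coming from $R=xyR\oplus(1-xy)R$. With $\ker L_x=(1-yx)R$ and $\coker L_x\cong_{\gr}(\gamma)(1-xy)R$ established, the asserted equivalence $(1-yx)R\cong_{\gr}(\gamma)(1-xy)R$ if and only if $\ker L_x\cong_{\gr}\coker L_x$ is immediate. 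I expect the only delicate point to be the consistent bookkeeping of the shifts $(\gamma)$ and $(\gamma^{-1})$, in particular verifying that the cokernel read off from the second displayed sequence agrees with $(\gamma)(1-xy)R$ and not with an off-by-$\gamma$ shift; every other step is a direct calculation from $x=xyx$.
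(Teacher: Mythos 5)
Your proposal is correct and follows essentially the same route as the paper's proof: deduce $y\in R_{\gamma^{-1}}$, identify $\ker L_x=(1-yx)R$ and $xR=(\gamma)xyR$, check that $L_x$ restricts to a graded isomorphism $yxR\to xR$, and conclude $\coker L_x\cong_{\gr}(\gamma)(1-xy)R$, from which the equivalence is immediate. The paper compresses exactly these steps into two sentences; your version just spells out the idempotent decompositions and shift bookkeeping in more detail.
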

\begin{proof}
The relation $x=xyx$ implies that $y\in R_{\gamma^{-1}},$ that $\ker L_x=(1-yx)R$ and $(\gamma^{-1})xR=xyR,$ and that $L_x: yxR\to xyxR=xR$ is a graded isomorphism. So, $(\gamma^{-1})\coker L_x\cong_{\gr}(1-xy)R$ and thus $\coker L_x\cong_{\gr}(\gamma)(1-xy)R.$ 
\end{proof}

Lemma \ref{invertible_homogeneous_elements}, needed for Proposition \ref{proposition_matrices}, shows that UR$_{\gr}$ is a rather strong requirement. 
\begin{lemma}
If a graded ring $R$ is graded unit-regular, then every nonzero component contains an invertible element.  
\label{invertible_homogeneous_elements}
\end{lemma}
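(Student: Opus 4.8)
The plan is to fix an element $\gamma\in\Gamma$ with $R_\gamma\neq 0$, select a nonzero homogeneous $x\in R_\gamma$, invoke UR$_{\gr}$ to produce a homogeneous unit with homogeneous inverse realizing the unit-regularity relation for $x$, and then run a degree-counting argument in $\Gamma$ to pin down an invertible element lying in $R_\gamma$ itself. The entire argument is short; the only thing requiring care is bookkeeping of homogeneous degrees, and this works even though $\Gamma$ is not assumed abelian because a group always admits cancellation.

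First I would unwind the hypothesis. The property UR$_{\gr}$ asserts $(\forall x\in H)(\exists u\in H)(\exists v\in H)(uv=vu=1$ and $x=xux)$, so for our fixed nonzero $x\in R_\gamma$ there are \emph{homogeneous} elements $u\in R_\delta$ and $v\in R_\eta$, for some $\delta,\eta\in\Gamma$, with $uv=vu=1$ and $x=xux$. The homogeneity of $u$ and $v$ is exactly what the quantifiers $\exists u\in H$ and $\exists v\in H$ supply, and it is the crucial input.

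The main step is the degree comparison. Since $x=xux$ is nonzero and lies in $R_\gamma$, while the product $xux$ is homogeneous of degree $\gamma\delta\gamma$, equality of nonzero homogeneous elements forces $\gamma\delta\gamma=\gamma$. Left-cancelling $\gamma$ and then right-cancelling $\gamma$ in $\Gamma$ gives $\delta=\gamma^{-1}$, so $u\in R_{\gamma^{-1}}$. Applying the same reasoning to the relation $uv=1$, which is a nonzero element of $R_\epsilon$ while $uv$ is homogeneous of degree $\gamma^{-1}\eta$, forces $\gamma^{-1}\eta=\epsilon$, that is $\eta=\gamma$. Hence $v\in R_\gamma$, and $v$ is invertible by construction.

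This exhibits an invertible element in the nonzero component $R_\gamma$, which is what was to be shown. I do not anticipate any genuine obstacle: the argument reduces entirely to tracking degrees, and the only subtlety is to ensure one uses the homogeneity of both $u$ and its inverse $v$ rather than merely of $u$, since it is $v$ (not $u$) that ends up in $R_\gamma$.
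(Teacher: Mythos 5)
Your proof is correct and takes essentially the same route as the paper's: the relation $x=xux$ forces the homogeneous unit $u$ to lie in $R_{\gamma^{-1}}$, so its inverse lies in $R_\gamma$. Your version simply makes the degree bookkeeping (and the explicit use of homogeneity of the inverse $v$) more detailed than the paper's one-line argument.
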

\begin{proof}
If $0\neq x\in R_\gamma$ and $R$ is graded unit-regular, then there is a homogeneous invertible element $u$ such that $x=xux.$ This last condition forces $u$ to be in $R_{\gamma^{-1}}$ and so its inverse is in $R_\gamma.$
\end{proof}

\section{Graded unit-regular rings and graded cancellability}\label{section_unit_regular_and_cancellability}

\subsection{Graded unit-regular rings}\label{subsection_unit_regular}  We recall \cite[Theorem 4.1]{Goodearl_book} stating that the following conditions are equivalent for a ring $R$, a right $R$-module $A$, and $S=\End_R(A).$  
\begin{enumerate}
\item $S$ is unit-regular. 
 
\item $S$ is regular and $A$ satisfies internal cancellation IC$(A).$  

\item $S$ is regular and $e\sim f$ implies $1-e\sim 1-f$ for all idempotents $e,f\in S.$
\end{enumerate}
Propositions \ref{graded_ur_of_end_ring} and \ref{graded_unit_regular} generalize these equivalences if $R$ is graded. Recall that IC$_{\gr}(A)$ denotes the condition $A=B\oplus C=D\oplus E$ and $B\cong_{\gr} D$ implies $C\cong_{\gr} E$ for graded modules $A,B,C,D,E.$

\begin{proposition} Let $R$ be a $\Gamma$-graded ring, $A$ a graded $R$-module, and $S_\epsilon$ be the component of $S=\END_R(A)$ corresponding to the identity $\epsilon\in \Gamma$. Then, the following conditions are equivalent. 
\begin{enumerate}
\item $S_\epsilon$ is unit-regular. 
 
\item $S_\epsilon$ is regular and $A$ satisfies graded internal cancellation IC$_{\gr}(A).$  

\item $S_\epsilon$ is regular and $e\sim f$ implies $1-e\sim 1-f$ for all idempotents $e,f\in S_\epsilon.$
\end{enumerate}
If  $A$ is finitely generated, the above statements hold for $S=\End_R(A).$
\label{graded_ur_of_end_ring}
\end{proposition}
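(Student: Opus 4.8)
The plan is to reduce the whole statement to the nongraded equivalences \cite[Theorem 4.1]{Goodearl_book} applied to the single ring $S_\epsilon$, after translating the graded module condition IC$_{\gr}(A)$ into a statement about idempotents of $S_\epsilon$ by means of Lemmas \ref{graded_equivalence} and \ref{endo_to_module}. The starting observation is that the idempotents of $S_\epsilon=\END_R(A)_\epsilon$ are precisely the projections attached to graded direct sum decompositions of $A$: an idempotent $e\in S_\epsilon$ is a graded endomorphism of degree $\epsilon$, so $A=eA\oplus(1-e)A$ with both summands graded, and conversely a graded decomposition $A=B\oplus C$ yields the projection onto $B$ along $C$, which lies in $S_\epsilon$ and is idempotent.

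First I would set up the key dictionary entry: for idempotents $e,f\in S_\epsilon$ one has $eA\cong_{\gr}fA$ if and only if $e\sim f$ in $S_\epsilon$. This is exactly Lemma \ref{endo_to_module} and Lemma \ref{graded_equivalence} taken with $\gamma=\epsilon$ (the latter applied with the graded ring there set to $S$): the chain $eA\cong_{\gr}fA\Leftrightarrow eS\cong_{\gr}fS\Leftrightarrow(\exists\,x\in eS_\epsilon f,\ y\in fS_\epsilon e)(xy=e,\ yx=f)$ is the relation $e\sim f$ read inside $S_\epsilon$. Applying this both to the pair $(e,f)$ and to the complementary pair $(1-e,1-f)$, and ranging over all graded decompositions of $A$, converts the defining implication of IC$_{\gr}(A)$ into the implication ``$e\sim f\Rightarrow 1-e\sim 1-f$ for all idempotents $e,f\in S_\epsilon$''. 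Since the regularity hypothesis on $S_\epsilon$ is common to (2) and (3), this yields (2) $\Leftrightarrow$ (3). I expect this to be the only step requiring genuine care, the delicate point being to keep all graded isomorphisms unshifted (the $\gamma=\epsilon$ case of the lemmas), so that no suspension slips into IC$_{\gr}$.

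It then remains to prove (1) $\Leftrightarrow$ (3), which I would obtain by invoking \cite[Theorem 4.1]{Goodearl_book} for the ring $T=S_\epsilon$ viewed as the endomorphism ring of the right module $T_T$. For this choice $\End_T(T_T)\cong T=S_\epsilon$, so the cited equivalence of its conditions (1) and (3) reads exactly ``$S_\epsilon$ is unit-regular $\Leftrightarrow$ $S_\epsilon$ is regular and $e\sim f\Rightarrow 1-e\sim 1-f$ for all idempotents $e,f\in S_\epsilon$'', which is our (1) $\Leftrightarrow$ (3). Combining with the previous paragraph closes the cycle.

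Finally, for the closing sentence of the statement I would appeal to the prerequisite that $\End_R(A)=\END_R(A)$ whenever $A$ is finitely generated. In that case $S=\END_R(A)$ is already all of $\End_R(A)$, hence $S_\epsilon=\End_R(A)_\epsilon$, and the three equivalent conditions may be stated verbatim for $S=\End_R(A)$ with no further argument.
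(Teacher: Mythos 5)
Your proposal is correct, but it follows a genuinely different route from the paper's proof, so a comparison is in order. The paper proves the cycle (1) $\Rightarrow$ (2) $\Rightarrow$ (3) $\Rightarrow$ (1) directly, re-running Goodearl's argument in the graded setting: for (1) $\Rightarrow$ (2) it extends a graded isomorphism $B\cong_{\gr}D$ to an element of $S_\epsilon$ and lets the unit act on the complements, and for (3) $\Rightarrow$ (1) it explicitly builds the unit $yxy+v$ with inverse $x+u$ out of the equivalences $e\sim f$ and $1-e\sim 1-f$; only its step (2) $\Rightarrow$ (3) invokes Lemma \ref{endo_to_module} the way you do. Your factorization is more economical in one respect: you observe that conditions (1) and (3) mention only the ring $S_\epsilon$ and no module, so their equivalence is literally the nongraded theorem \cite[Theorem 4.1]{Goodearl_book} applied to $S_\epsilon\cong\End_{S_\epsilon}\bigl((S_\epsilon)_{S_\epsilon}\bigr)$, and the only genuinely graded content is the dictionary (2) $\Leftrightarrow$ (3), which Lemmas \ref{graded_equivalence} and \ref{endo_to_module} at $\gamma=\epsilon$ supply (your insistence on keeping all isomorphisms unshifted is exactly the point that needs care, and your translation of IC$_{\gr}(A)$ into the idempotent condition is complete in both directions, since graded decompositions of $A$ correspond bijectively to idempotents of $S_\epsilon$). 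What the paper's longer, self-contained route buys is reusable machinery: the explicit construction in its proof of (1) $\Rightarrow$ (2) is cited verbatim later in the proof of Proposition \ref{graded_unit_regular}, where shifted isomorphisms $(\gamma)$ do occur and a black-box reduction to the nongraded theorem is no longer available. Your handling of the final claim, via $\End_R(A)=\END_R(A)$ for finitely generated $A$, is also correct and is exactly the prerequisite the paper relies on.
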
 
\begin{proof}
To show (1) $\Rightarrow$ (2), let $A=B\oplus C=D\oplus E$ and $x:B\cong_{\gr}D.$ Extend $x$ to an element of $S_\epsilon$ by mapping $C$ to 0. Let $u\in S_\epsilon$ be invertible and such that $xux=x$. Then, $(1-ux)A=\ker x=C$ and $uxA=uD$ so $u$ maps $D=xA$ onto $uxA$ and so $u$ maps $E$ onto $(1-ux)A=C.$ Hence $C\cong_{\gr} E.$ 

To show (2) $\Rightarrow$ (3), let $e,f\in S_\epsilon$ be idempotents such that $e\sim f$ so $eA\cong_{\gr} fA.$ By (2), $(1-e)A\cong_{\gr} (1-f)A$ which implies that $1-e\sim 1-f$ as elements of $S_\epsilon$ by Lemma \ref{endo_to_module}. 

To show (3) $\Rightarrow$ (1), let $x\in S_\epsilon$ and $y\in S_\epsilon$ be such that $xyx=x.$ Then $e=xy$ and $f=yx$ are idempotents of $S_\epsilon$ such that $e\sim f.$ By the assumption, $1-e\sim 1-f.$ So, there are $u\in (1-e)S_\epsilon(1-f), v\in (1-f)S_\epsilon(1-e)$ such that $uv=1-e$ and $vu=1-f.$ Since $x\in eS_\epsilon f$ and $yxy\in fS_\epsilon e,$ $yxy+v\in S_\epsilon$ is invertible with inverse $x+u$ and $x(yxy+v)x=x.$
\end{proof}

Recall the condition IC$_{\gr}$ from the introduction and let Mat$_\epsilon$ be the property below. 
\begin{itemize}
\item[IC$_{\gr}\;\,$:] IC$_{\gr}(P)$ holds for every finitely generated graded projective module $P.$
\item[Mat$_\epsilon$:] The $\epsilon$-component of $\M_n(R)(\gamma_1,\ldots,\gamma_n)$ is unit-regular for every $n$ and every $\gamma_1,\ldots,\gamma_n\in \Gamma.$
\end{itemize}

\begin{proposition}
Let $R$ be a $\Gamma$-graded ring. The following conditions are equivalent. 
\begin{enumerate}
\item Mat$_\epsilon$ holds for $R$. 
 
\item Mat$_\epsilon$ holds for $\M_m(R)(\delta_1,\ldots,\delta_m)$ for every positive integer $m$ and every $\delta_1,\ldots, \delta_m\in \Gamma.$ 
\end{enumerate}
These two conditions imply the condition IC$_{\gr}.$ If $\M_n(R)(\gamma_1,\ldots,\gamma_n)_\epsilon$ is regular for every $n$ and every $\gamma_1,\ldots,\gamma_n\in\Gamma$, then (1), (2) and IC$_{\gr}$ are equivalent.
\label{matricial_unit-regularity} 
\end{proposition}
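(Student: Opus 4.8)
The plan is to prove the cycle $(2)\Rightarrow(1)$, $(1)\Rightarrow(2)$, $(1)\Rightarrow$ IC$_{\gr}$, and, under the regularity hypothesis, IC$_{\gr}\Rightarrow(1)$, leaning throughout on Proposition \ref{graded_ur_of_end_ring} and on the graded isomorphism $\End_R(F)\cong_{\gr}\M_n(R)(\gamma_1,\ldots,\gamma_n)$ for $F=(\gamma_1^{-1})R\oplus\cdots\oplus(\gamma_n^{-1})R$ recalled in Section \ref{subsection_prerequisites}.

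The implication $(2)\Rightarrow(1)$ is immediate, since Mat$_\epsilon$ for $R$ itself is the special case $m=1,\ \delta_1=\epsilon$ of $(2)$, as $\M_1(R)(\epsilon)=R$. For $(1)\Rightarrow(2)$ I would use the graded ``matrices of matrices'' identity: writing $S=\M_m(R)(\delta_1,\ldots,\delta_m)$, one has
$$\M_n(S)(\gamma_1,\ldots,\gamma_n)\cong_{\gr}\M_{nm}(R)(\gamma_1\delta_1,\ldots,\gamma_1\delta_m,\gamma_2\delta_1,\ldots,\gamma_n\delta_m),$$
which follows by a direct inspection: the $(k,l)$-entry inside the $(i,j)$-block of an element of overall degree $\sigma$ lands in $R_{(\gamma_i\delta_k)^{-1}\sigma(\gamma_j\delta_l)}$, exactly the degree prescribed on the right. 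Taking $\epsilon$-components gives $\M_n(S)(\gamma_1,\ldots,\gamma_n)_\epsilon\cong\M_{nm}(R)(\cdots)_\epsilon$, which is unit-regular by $(1)$; as $n$ and the shifts are arbitrary, Mat$_\epsilon$ holds for $S$, which is $(2)$.

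For $(1)\Rightarrow$ IC$_{\gr}$, let $P$ be a finitely generated graded projective module, realized as a graded direct summand of $F=(\gamma_1^{-1})R\oplus\cdots\oplus(\gamma_n^{-1})R$, so that $S:=\End_R(F)\cong_{\gr}\M_n(R)(\gamma_1,\ldots,\gamma_n)$ and hence $S_\epsilon$ is unit-regular by $(1)$. Letting $e\in S_\epsilon$ be the graded projection onto $P$, we have $\END_R(P)_\epsilon\cong eS_\epsilon e$, a corner of the unit-regular ring $S_\epsilon$, hence itself unit-regular since unit-regularity is closed under corners (as noted in the introduction). Proposition \ref{graded_ur_of_end_ring} then yields IC$_{\gr}(P)$, and since $P$ was arbitrary in $\Proj_{\gr}$, IC$_{\gr}$ holds.

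Finally, assuming $\M_n(R)(\gamma_1,\ldots,\gamma_n)_\epsilon$ is regular for all $n$ and all shifts, I would close the loop with IC$_{\gr}\Rightarrow(1)$. Fixing $n$ and $\gamma_1,\ldots,\gamma_n$ and again taking $F=(\gamma_1^{-1})R\oplus\cdots\oplus(\gamma_n^{-1})R\in\Proj_{\gr}$, the ring $\END_R(F)_\epsilon\cong\M_n(R)(\gamma_1,\ldots,\gamma_n)_\epsilon$ is regular by hypothesis while IC$_{\gr}(F)$ holds by IC$_{\gr}$, so Proposition \ref{graded_ur_of_end_ring} forces it to be unit-regular; this is precisely Mat$_\epsilon$, i.e.\ $(1)$. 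The only genuinely technical step is the graded iteration isomorphism used in $(1)\Rightarrow(2)$; every other step is a direct application of Proposition \ref{graded_ur_of_end_ring} together with closure of unit-regularity under corners and the fact that finitely generated graded projectives are summands of the standard graded free modules.
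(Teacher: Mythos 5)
Your proposal is correct and follows essentially the same route as the paper: the same graded ``matrices of matrices'' identity for (1)$\Leftrightarrow$(2), the same reduction to graded free modules $F=(\gamma_1^{-1})R\oplus\cdots\oplus(\gamma_n^{-1})R$, and Proposition \ref{graded_ur_of_end_ring} applied in both remaining directions. The only (equally valid) local difference is how you pass from free modules to arbitrary $P\in\Proj_{\gr}$: the paper invokes the fact that IC$_{\gr}(\underline{\hskip.3cm})$ is inherited by graded direct summands and so only needs IC$_{\gr}(F)$ for free $F$, while you instead transfer unit-regularity from $\End_R(F)_\epsilon$ to the corner $e\End_R(F)_\epsilon e\cong\END_R(P)_\epsilon$ and then apply Proposition \ref{graded_ur_of_end_ring} directly to $P$.
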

\begin{proof} 
Since $(\gamma^{-1})(\delta^{-1})R(\delta)(\gamma)=((\gamma\delta)^{-1})R(\gamma\delta)$ for $\gamma,\delta\in\Gamma,$ we have that 
\[\M_n(\M_m(R)(\delta_1,\ldots,\delta_m))(\gamma_1,\ldots,\gamma_n)=\M_{nm}(R)(\gamma_1\delta_1, \ldots, \gamma_1\delta_m,\; \ldots\ldots \ldots,\; \gamma_n\delta_1,\ldots, \gamma_n\delta_m)\] for all positive integers $m$ and $n$ and all $\gamma_1,\ldots,\gamma_n, \delta_1,\ldots,\delta_m\in \Gamma.$  So, assuming (1) is sufficient for (2) and the converse trivially holds.  

Since IC$_{\gr}(\underline{\hskip.3cm})$ is preserved under formation of graded direct summands, IC$_{\gr}$ holds iff  IC$_{\gr}(F)$ holds for every finitely generated graded free module $F$. Every such module $F$ is of the form $\bigoplus_{i=1}^n(\gamma_i^{-1})R$ for some $n$ and some $\gamma_1,\ldots,\gamma_n.$ Since $\End_R(F)=\END_R(F)=\M_n(R)(\gamma_1,\ldots, \gamma_n),$
if $\End_R(F)_\epsilon$ is unit-regular then  IC$_{\gr}(F)$ holds by Proposition \ref{graded_ur_of_end_ring}. If $\End_R(F)_\epsilon$ is regular, then IC$_{\gr}(F)$ implies that $\End_R(F)_\epsilon$ is unit-regular also by Proposition \ref{graded_ur_of_end_ring}.    
\end{proof}

\begin{remark}
\label{matrices_over_reg_are_reg} 
Note that the assumption in the last sentence of Proposition \ref{matricial_unit-regularity} is automatically satisfied if $R$ is graded regular. Indeed, by the graded analogue of \cite[Theorem 1.7]{Goodearl_book}, graded regularity is passed to graded matrix algebras. The proof is analogous to the nongraded case: if $R$ is graded regular, then it is direct to check that $(\gamma^{-1})R(\gamma)$ is graded regular for every $\gamma\in \Gamma.$ So, $(\gamma_i^{-1})R(\gamma_i)\cong_{\gr}e_{ii}\M_n(R)(\gamma_1,\ldots, \gamma_n)e_{ii}$ is graded regular for all the standard matrix units $e_{ii}$ for any $n$ and $\gamma_1,\ldots, \gamma_n\in \Gamma.$ Then one shows that $\M_n(R)(\gamma_1,\ldots, \gamma_n)$ is graded regular by induction analogously to the proof in the nongraded case (see \cite[Lemma 1.6]{Goodearl_book}). 
This shows that if $R$ is graded regular, then $\M_n(R)(\gamma_1,\ldots, \gamma_n)$ is graded regular and, consequently, $\M_n(R)(\gamma_1,\ldots, \gamma_n)_\epsilon$ is regular.
\end{remark}

In Proposition \ref{graded_unit_regular}, we relate UR$^{\w}_{\gr}$ and UR$_{\gr}$ of $\END_R(A)$ for a graded module $A$ with the following weak and strong internal cancellation properties of $A$ respectively.  
\begin{enumerate}
\item[IC$^{\w}_{\gr}(A)$:]\hskip.5cm  $A=B\oplus C=D\oplus E$ and $B\cong_{\gr} (\gamma)D$  for some $\gamma\in\Gamma$ implies $C\cong E.$
\item[IC$^{\s}_{\gr}(A)$:]\hskip.5cm  $A=B\oplus C=D\oplus E$ and $B\cong_{\gr} (\gamma)D$  for some $\gamma\in\Gamma$ implies $C\cong_{\gr} (\gamma)E.$
\end{enumerate}

In Proposition \ref{graded_unit_regular}, we relate the properties UR$^{\w}_{\gr}$ and UR$_{\gr}$ with IC$^{\w}_{\gr}(A)$ and IC$^{\s}_{\gr}(A)$ respectively. 

\begin{proposition}
Let $R$ be a $\Gamma$-graded ring, $A$ be a graded right $R$-module, and $S=\END_R(A).$ The following conditions are equivalent. 
\begin{enumerate}
\item[(1$^{\w}$)] $S$ is weakly graded unit-regular. 
 
\item[(2$^{\w}$)] $S$ is graded regular and $A$ satisfies weak graded internal cancellation IC$^{\w}_{\gr}(A)$.  

\item[(3$^{\w}$)] $S$ is graded regular and  $eA\cong_{\gr} (\gamma)fA$ for some $\gamma\in \Gamma,$ implies $(1-e)A\cong (1-f)A$ for all homogeneous idempotents $e,f\in S.$
\end{enumerate}
The following conditions are also equivalent.
\begin{enumerate}
\item[(1$^{\s}$)] $S$ is graded unit-regular. 
 
\item[(2$^{\s}$)] $S$ is graded regular and $A$ satisfies strong graded internal cancellation IC$^{\s}_{\gr}(A)$.  

\item[(3$^{\s}$)] $S$ is graded regular and $eA\cong_{\gr} (\gamma)fA$ for some $\gamma\in \Gamma,$ implies $(1-e)A\cong (\gamma)(1-f)A$ for all homogeneous idempotents $e,f\in S.$
\end{enumerate}
If $A$ is finitely generated, then the above statements hold for  $S=\End_R(A).$
\label{graded_unit_regular}
\end{proposition}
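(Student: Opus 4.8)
The plan is to prove each of the two three-term chains by closing the cycle $(1)\Rightarrow(2)\Rightarrow(3)\Rightarrow(1)$, carrying the strong (superscript $\s$) and weak (superscript $\w$) versions in parallel, since they diverge only in whether one auxiliary unit is homogeneous. A preliminary observation handles the ``graded regular'' clause of $(2)$ and $(3)$: if $x\in S_\gamma$ and $x=xux$ for a unit $u$, then the $\gamma^{-1}$-component $u_{\gamma^{-1}}$ satisfies $xu_{\gamma^{-1}}x=x$, so $S$ is graded regular in both versions, exactly as Reg$^{\w}_{\gr}$ and Reg$_{\gr}$ coincide. The equivalence $(2)\Leftrightarrow(3)$ is then a reformulation: a graded direct summand of $A$ is the image of a homogeneous idempotent of $S$ (its projection lies in $S_\epsilon$, as homogeneous idempotents are concentrated in degree $\epsilon$), so the decompositions $A=B\oplus C=D\oplus E$ appearing in IC$^{\s}_{\gr}(A)$ and IC$^{\w}_{\gr}(A)$ correspond to homogeneous idempotents $e,f$ with $eA=B$, $(1-e)A=C$, $fA=D$, $(1-f)A=E$, while the hypothesis $B\cong_{\gr}(\gamma)D$ is precisely $eA\cong_{\gr}(\gamma)fA$. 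Thus $(2)$ and $(3)$ say the same thing in two languages.

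For $(1)\Rightarrow(2)$ I would argue at the module level, mirroring the proof of Proposition \ref{graded_ur_of_end_ring}. Given $A=B\oplus C=D\oplus E$ and a graded isomorphism $x\colon B\cong_{\gr}(\gamma)D$, extend $x$ to $S$ by sending $C$ to $0$; then $x\in S_\gamma$. Graded unit-regularity supplies a homogeneous unit $u$ with $x=xux$, and comparing degrees forces $u\in S_{\gamma^{-1}}$. The purely algebraic identities $(1-ux)A=\ker x=C$ and $uxA=uD$ give $A=uD\oplus C=uD\oplus uE$, and projecting along $uD$ yields a graded isomorphism $uE\cong_{\gr} C$. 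Since $u\in S_{\gamma^{-1}}$ shifts degrees by $\gamma^{-1}$, it also furnishes $uE\cong_{\gr}(\gamma)E$, and composing gives $C\cong_{\gr}(\gamma)E$, which is IC$^{\s}_{\gr}(A)$. In the weak case the unit produced by UR$^{\w}_{\gr}$ need not be homogeneous, so $uE$ need not be graded and the two transported isomorphisms are merely ungraded, yielding the weaker conclusion $C\cong E$ of IC$^{\w}_{\gr}(A)$; the identities $(1-ux)A=C$ and $uxA=uD$ survive the loss of homogeneity since they are algebraic.

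The crux is $(3)\Rightarrow(1)$, the graded gluing of partial units. Given homogeneous $x\in S_\gamma$, graded regularity gives $y\in S_{\gamma^{-1}}$ with $xyx=x$; set $e=xy$ and $f=yx$, idempotents in $S_\epsilon$. One checks that $x$ restricts to a graded isomorphism $fA\cong_{\gr}(\gamma)eA$, i.e. $eA\cong_{\gr}(\gamma^{-1})fA$. Applying $(3^{\s})$ with shift $\gamma^{-1}$ gives $(1-e)A\cong_{\gr}(\gamma^{-1})(1-f)A$, whence Lemmas \ref{endo_to_module} and \ref{graded_equivalence} produce homogeneous $u\in(1-e)S_\gamma(1-f)$ and $v\in(1-f)S_{\gamma^{-1}}(1-e)$ with $uv=1-e$ and $vu=1-f$. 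Setting $w=yxy+v$, the orthogonality relations $xv=0$ (as $x=xf$ and $fv=0$), $\,u\,yxy=0$ (as $u=u(1-f)$ and $(1-f)yxy=0$), $\,yxy\,u=0$, and $vx=0$ give $(x+u)w=x\,yxy+uv=e+(1-e)=1$ and $w(x+u)=yxy\,x+vu=f+(1-f)=1$, so $w$ is a unit with inverse $x+u$ and $xwx=x\,yxy\,x=x$. Since $yxy$ and $v$ both lie in $S_{\gamma^{-1}}$, the unit $w$ is homogeneous, establishing UR$_{\gr}$. In the weak case $(3^{\w})$ yields only an ungraded $(1-e)A\cong(1-f)A$, so $u,v$, and hence $w$, are non-homogeneous, giving UR$^{\w}_{\gr}$. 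The finitely generated case follows because then $\End_R(A)=\END_R(A)$ is graded, so every step applies verbatim.

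The main obstacle throughout is the shift bookkeeping: one must track whether $\gamma$ or $\gamma^{-1}$ appears at each stage and, in the strong version, confirm that the degree of $v$ coincides with that of $yxy$ so that the glued element $w$ is genuinely homogeneous. The entire weak/strong dichotomy is governed by exactly this homogeneity, since a non-homogeneous unit destroys the gradedness of the transported summand $uE$ and of the glued unit $w$.
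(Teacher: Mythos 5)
Your proof is correct and takes essentially the same route as the paper's: the same extension of the graded isomorphism $x\colon B\cong_{\gr}(\gamma)D$ to an element of $S_\gamma$ for $(1)\Rightarrow(2)$, the same idempotent--summand dictionary behind $(2)$ and $(3)$, and the same glued unit $yxy+v$ with inverse $x+u$ (via Lemmas \ref{graded_equivalence} and \ref{endo_to_module}) for $(3)\Rightarrow(1)$, with the weak/strong cases diverging exactly in the homogeneity of $u$ and $v$. The only differences are expository: you write out the orthogonality relations and the projection-along-$uD$ step that the paper leaves implicit.
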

\begin{proof}
Let us show (1$^{\w}$)$\Rightarrow$(2$^{\w}$) and (1$^{\s}$) $\Rightarrow$(2$^{\s}$).
Let $A=B\oplus C=D\oplus E$ and $x:B\cong_{\gr}(\gamma)D.$ Extend $x$ to $A$ by $xC=0.$ So, $x\in \HOM_R(A, (\gamma)A)_\epsilon=\END_R(A)_\gamma=S_\gamma.$ Assuming (1$^{\w}$), there is invertible $u\in S$ such that $x=xux.$ By the proof of (1)$\Rightarrow$(2) of Proposition \ref{graded_ur_of_end_ring}, we obtain $C\cong E.$ Assuming (1$^{\s}$), such $u$ can be found in $S_{\gamma^{-1}}.$  Then, $(1-ux)A=\ker x=C$ and $uxA=u(\gamma)D$ so $u$ maps $(\gamma)D=xA$ onto $uxA$ and so $u$ maps $(\gamma)E$ onto $(1-ux)A=C.$ Hence $C\cong_{\gr} (\gamma)E.$ 

Let us show (2$^{\w}$)$\Rightarrow$(3$^{\w}$) and (2$^{\s}$)$\Rightarrow$(3$^{\s}$). Assume that $eA\cong_{\gr} (\gamma)fA$ for some $\gamma\in \Gamma.$ Condition (2$^{\w}$) implies that $(1-e)A\cong(1-f)A$ and condition (2$^{\s}$) implies that $(1-e)A\cong_{\gr}(\gamma)(1-f)A.$ 

Let us show (3$^{\w}$)$\Rightarrow$(1$^{\w}$) and (3$^{\s}$)$\Rightarrow$(1$^{\s}$). Let $x\in S_\gamma.$ Under either (3$^{\w}$) or (3$^{\s}$), there is $y\in S_{\gamma^{-1}}$ such that $xyx=x.$ Then $e=xy$ and $f=yx$ are homogeneous idempotents and  $eA\cong_{\gr}(\gamma^{-1})fA$
by Lemmas \ref{graded_equivalence} and \ref{endo_to_module}. Condition (3$^{\w}$) implies that $(1-e)A\cong (1-f)A$ and condition (3$^{\s}$) that $(1-e)A\cong_{\gr} (\gamma^{-1})(1-f)A.$ In the second case, there are $u\in (1-e)S_\gamma(1-f), v\in (1-f)S_{\gamma^{-1}}(1-e)$ such that $uv=1-e$ and $vu=1-f$ by Lemmas \ref{graded_equivalence} and \ref{endo_to_module}. Then $yxy+v\in S_{\gamma^{-1}}$ is invertible with inverse $x+u\in S_\gamma$ and $x(yxy+v)x=x.$ In the first case, there are $u\in(1-e)S(1-f)$ and $v\in(1-f)S(1-e)$ such that  $uv=1-e$ and $vu=1-f$ and the rest of the prior arguments show that $yxy+v$ is invertible and that $x(yxy+v)x=x.$
\end{proof}

The implication UR$_{\gr}\Rightarrow$ UR$^{\w}_{\gr}$ is direct and it is strict by Example \ref{example_cancellability_conditions}.
It is also direct to see that IC$^{\s}_{\gr}(\underline{\hskip.3cm})$ implies both IC$^{\w}_{\gr}(\underline{\hskip.3cm})$ and IC$_{\gr}(\underline{\hskip.3cm}).$ Hence, UR$_{\gr}$ implies IC$_{\gr}(R).$ However, it is not direct to see that UR$_{\gr}\Rightarrow$ IC$_{\gr}.$  This implication follows from Theorem \ref{C_and_IC}.    

\subsection{Graded cancellability}
\label{subsection_graded_cancellable}

The cancellation property has a favorable feature that a finite direct sum is cancellable if and only if each of its terms is cancellable (see \cite[Proposition 3.3]{Lam_cancellation_properties}). Relating IC$_{\gr}(\underline{\hskip.3cm})$ with C$_{\gr}(\underline{\hskip.3cm})$ in Theorem \ref{C_and_IC}, we show that IC$_{\gr}(\underline{\hskip.3cm})$ is closed under the formation of direct sums of modules if the ring is graded regular. The conditions IC$_{\gr}(\underline{\hskip.3cm})$ and IC$^{\s}_{\gr}(\underline{\hskip.3cm})$ alone are not closed for finite direct sums (consider \cite[Example 3.2 (3)]{Lam_cancellation_properties} and grade the ring trivially). 

If $R$ is a $\Gamma$-graded ring and $A$ a graded module, recall that we say that C$_{\gr}(A)$ holds in a category of graded $R$-modules $\Mod$ if $A\oplus B\cong_{\gr} A\oplus C$ implies $B\cong_{\gr} C$
for all graded modules $B$ and $C$ in $\Mod.$ If $\Proj_{\gr}$ is the category of finitely generated graded projective modules, and $A$ in $\Proj_{\gr},$ we consider C$_{\gr}(A)$ only in $\Proj_{\gr}$ so we abbreviate ``C$_{\gr}(A)$ holds in $\Proj_{\gr}$'' as ``C$_{\gr}(A)$ holds''. \footnote{ 
One could also consider the weak and strong graded cancellability of a module $A\in \Proj_{\gr}$ analogously to the weak and strong graded internal cancellation as follows. 
\begin{enumerate}
\item[C$^{\w}_{\gr}(A)$:] \hskip.5cm $A\oplus B\cong_{\gr}  (\gamma)A\oplus C$ implies $\;\;\;\;\;B\cong\;\; C$  for every $\gamma\in\Gamma$ and every $B,C\in \Proj_{\gr}.$

\item[C$^{\s}_{\gr}(A)$:] \hskip.5cm $A\oplus B\cong_{\gr} (\gamma)A\oplus C$ implies $(\gamma)B\cong_{\gr}C$ for every $\gamma\in\Gamma$ and every $B,C\in \Proj_{\gr}.$
\end{enumerate}
It is direct to show that C$^{\s}_{\gr}(\underline{\hskip.3cm})$ $\Rightarrow$ C$_{\gr}(\underline{\hskip.3cm})$ and that C$^{\s}_{\gr}(\underline{\hskip.3cm})$  $\Rightarrow$ C$^{\w}_{\gr}(\underline{\hskip.3cm}).$ One can show that the conditions C$^{\w}_{\gr}(\underline{\hskip.3cm})$ and C$^{\s}_{\gr}(\underline{\hskip.3cm})$ do not share the nice addition and shift-invariant properties of C$_{\gr}(\underline{\hskip.3cm})$. } 

Note that C$_{\gr}(A)$ holds if and only if C$_{\gr}((\gamma)A)$ holds for any $\gamma\in\Gamma$ as it is directly to check. 
In addition, C$_{\gr}(A\oplus B)$ holds if and only if C$_{\gr}(A)$ and C$_{\gr}(B)$ hold. This can also easily be checked by the argument completely analogous to the nongraded case (see \cite[Proposition 3.3]{Lam_cancellation_properties}). \footnote{ All 
the statements made in this section so far are true if $\Proj_{\gr}$ is replaced by any category of graded modules.}
Thus,  
\begin{center} 
C$_{\gr}(R)$ holds \hskip.4cm if and only if \hskip.4cm C$_{\gr}(P)$ holds for any $P\in\Proj_{\gr}.$
\end{center}
Hence, C$_{\gr}(R)$ holds if and only if the $\Gamma$-monoid $\V^{\Gamma}(R)$ (see \cite[Section 1.3]{Lia_realization}) is cancellative.

In the nongraded case, C$(\underline{\hskip.3cm}) \Rightarrow$ IC$(\underline{\hskip.3cm})$ and the converse holds if $R$ is regular  (\cite[Theorem 4.5]{Goodearl_book}). We show the graded versions of these statements and relate C$_{\gr}$ with UR$_\epsilon$ and IC$_{\gr}$. 

\begin{theorem}
Let $R$ be a $\Gamma$-graded ring and $P\in\Proj_{\gr}$. 
\begin{enumerate}
\item If C$_{\gr}(P)$ holds, then  IC$_{\gr}(P)$ holds and the converse holds if $R$ is graded regular. 
  
\item If C$_{\gr}(R)$ holds, then IC$_{\gr}$ holds and the converse holds if $R$ is graded regular. 
 
\item If $R$ is graded regular, then $R_\epsilon$ is unit-regular if and only if C$_{\gr}(R)$ holds. Hence, the conditions 
UR$_\epsilon,$  C$_{\gr}(R),$  C$_{\gr},$  IC$_{\gr},$ and Mat$_\epsilon$ are all equivalent for a graded regular ring $R.$
\end{enumerate}
\label{C_and_IC}
\end{theorem}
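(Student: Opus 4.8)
The plan is to prove the five-way equivalence asserted in the ``Hence'' clause as one package and to read the three numbered statements off it. The machinery is the dictionary between graded decompositions $A=B\oplus C$ and homogeneous idempotents of $S=\END_R(A)$ supplied by Lemmas~\ref{graded_equivalence} and~\ref{endo_to_module}, Proposition~\ref{graded_ur_of_end_ring} (internal cancellation equals unit-regularity of $S_\epsilon$ once $S_\epsilon$ is regular), and Proposition~\ref{matricial_unit-regularity} (which couples IC$_{\gr}$ to Mat$_\epsilon$). I would organize the argument around the chain
$$\mathrm{C}_{\gr}(R)\ \Leftrightarrow\ \mathrm{C}_{\gr}\ \Leftrightarrow\ \mathrm{IC}_{\gr}\ \Leftrightarrow\ \mathrm{Mat}_\epsilon\ \Rightarrow\ \mathrm{UR}_\epsilon ,$$
proving every link except the closing return $\mathrm{UR}_\epsilon\Rightarrow\mathrm{Mat}_\epsilon$ by bookkeeping, and singling that one out as the real content.

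Most links are routine. For $\mathrm{C}_{\gr}(P)\Rightarrow\mathrm{IC}_{\gr}(P)$ (hence part~(1) forward and $\mathrm{C}_{\gr}\Rightarrow\mathrm{IC}_{\gr}$): given $P=B\oplus C=D\oplus E$ with $B\cong_{\gr}D$, replacing $B$ by $D$ gives $D\oplus C\cong_{\gr}P=D\oplus E$, and since the summand $D$ of $P$ inherits cancellability through the addition rule $\mathrm{C}_{\gr}(D\oplus E)\Leftrightarrow\mathrm{C}_{\gr}(D)\wedge\mathrm{C}_{\gr}(E)$ recorded before the theorem, cancelling $D$ yields $C\cong_{\gr}E$. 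For $\mathrm{IC}_{\gr}\Rightarrow\mathrm{C}_{\gr}$ (which also gives the converse in part~(2), and uses no regularity): from an isomorphism $\phi\colon P\oplus B\cong_{\gr}P\oplus C$ set $M=P\oplus B$ and transport the right-hand decomposition back to $M=\phi^{-1}(P)\oplus\phi^{-1}(C)$; applying $\mathrm{IC}_{\gr}(M)$ to the two decompositions $M=P\oplus B=\phi^{-1}(P)\oplus\phi^{-1}(C)$ with $P\cong_{\gr}\phi^{-1}(P)$ leaves $B\cong_{\gr}C$. Finally $\mathrm{C}_{\gr}(R)\Leftrightarrow\mathrm{C}_{\gr}$ is the remark preceding the theorem, $\mathrm{Mat}_\epsilon\Leftrightarrow\mathrm{IC}_{\gr}$ is Proposition~\ref{matricial_unit-regularity} whose regularity hypothesis is guaranteed by Remark~\ref{matrices_over_reg_are_reg}, and $\mathrm{Mat}_\epsilon\Rightarrow\mathrm{UR}_\epsilon$ is the case $n=1$, $\gamma_1=\epsilon$.

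The heart of the proof is $\mathrm{UR}_\epsilon\Rightarrow\mathrm{Mat}_\epsilon$: assuming $R_\epsilon$ unit-regular and $R$ graded regular, I must show that $S=\M_n(R)(\gamma_1,\dots,\gamma_n)$ has $S_\epsilon$ unit-regular for all $n$ and all $\gamma_i$, equivalently (Proposition~\ref{graded_ur_of_end_ring}, via Remark~\ref{matrices_over_reg_are_reg}) that $\mathrm{IC}_{\gr}(F)$ holds for the graded free module $F=(\gamma_1^{-1})R\oplus\dots\oplus(\gamma_n^{-1})R$. This is the graded analogue of the $\M_n$-stability of unit-regularity, and the expected obstacle. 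Its subtlety is that $S_\epsilon$ is a regular ring whose diagonal corners $e_{ii}S_\epsilon e_{ii}\cong R_\epsilon$ are unit-regular, but whose off-diagonal entries lie in the components $R_{\gamma_i^{-1}\gamma_j}$, so $S_\epsilon$ is in general \emph{not} Morita equivalent to $R_\epsilon$ and unit-regularity cannot simply be imported from the corners. I would therefore prove the transfer by verifying the idempotent criterion~(3) of Proposition~\ref{graded_ur_of_end_ring} for $S_\epsilon$ directly: translate $e\sim f$ in $S_\epsilon$ into a graded isomorphism $eF\cong_{\gr}fF$, and use the homogeneous regular solutions furnished by graded regularity of $R$ (and of $S$) to peel off the shifts one coordinate at a time, reducing the required cancellation $(1-e)F\cong_{\gr}(1-f)F$ to cancellation of $R_\epsilon$-modules, where unit-regularity of $R_\epsilon$ supplies it. Equivalently, and perhaps more cleanly, one shows that $R_\epsilon$ unit-regular forces the $\Gamma$-monoid $\V^{\Gamma}(R)$ to be cancellative; I expect this monoid-level formulation to be where the genuine work sits.

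With the chain closed, the theorem follows. Parts~(2) and~(3) are immediate from the equivalences above together with $\mathrm{Mat}_\epsilon\Rightarrow\mathrm{UR}_\epsilon$. For the converse in part~(1) I would apply the now-established package not to $R$ but to the graded ring $T=\END_R(P)=\End_R(P)$, which is graded regular whenever $R$ is, by the argument of Remark~\ref{matrices_over_reg_are_reg}: from $\mathrm{IC}_{\gr}(P)$ Proposition~\ref{graded_ur_of_end_ring} gives $T_\epsilon$ unit-regular, whence $\mathrm{C}_{\gr}(T)$ holds, and transporting this cancellation across the graded Morita equivalence between $T$-modules and the category $\mathrm{add}_{\gr}(P)$ of graded summands of finite sums of shifts of $P$ yields $\mathrm{C}_{\gr}(P)$. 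The main obstacle throughout remains the single implication $\mathrm{UR}_\epsilon\Rightarrow\mathrm{Mat}_\epsilon$ described above.
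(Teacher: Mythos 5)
Your formal bookkeeping is mostly sound, and your observation that the aggregate implication IC$_{\gr}\Rightarrow$C$_{\gr}$ needs no regularity (apply IC$_{\gr}(M)$ to $M=P\oplus B$ and the transported decomposition $M=\phi^{-1}(P)\oplus\phi^{-1}(C)$) is correct and even slightly sharper than what the paper records for part (2). But the proposal never proves the step you yourself isolate as ``the real content,'' and that step is the entire theorem: the implication UR$_\epsilon\Rightarrow$Mat$_\epsilon$ (equivalently, given your other links, IC$_{\gr}(R)\Rightarrow$C$_{\gr}(R)$). ``Peel off the shifts one coordinate at a time'' and ``show that $\V^{\Gamma}(R)$ is cancellative'' are restatements of the goal, not arguments. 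The tool the paper uses to close this loop, and which your proposal is missing entirely, is the graded analogue of the refinement theorem \cite[Theorem 2.8]{Goodearl_book}: over a graded regular ring, any two graded direct sum decompositions of a finitely generated graded projective module have isomorphic graded refinements. With it, IC$_{\gr}(P)\Rightarrow$C$_{\gr}(P)$ is short: from $P\oplus A\cong_{\gr}P\oplus B$ refine to obtain $P=P_1\oplus P_2$ and $A=A_1\oplus A_2$ with $P_1\oplus A_1\cong_{\gr}P$ and $P_2\oplus A_2\cong_{\gr}B$; then IC$_{\gr}(P)$ applied to $P=P_1\oplus P_2\cong_{\gr}P_1\oplus A_1$ gives $A_1\cong_{\gr}P_2$, whence $A\cong_{\gr}P_2\oplus A_2\cong_{\gr}B$. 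Taking $P=R$ then yields UR$_\epsilon\Rightarrow$IC$_{\gr}(R)\Rightarrow$C$_{\gr}(R)\Rightarrow$C$_{\gr}\Rightarrow$IC$_{\gr}\Rightarrow$Mat$_\epsilon$ by the routine links, so your ``hard step'' disappears; without some such refinement argument it remains open in your write-up.

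Your argument for the converse in part (1) has a separate flaw. The condition C$_{\gr}(P)$ requires cancelling $P$ against arbitrary $B,C\in\Proj_{\gr}$, whereas the graded equivalence between finitely generated graded projective $T$-modules, $T=\End_R(P)$, and $\mathrm{add}_{\gr}(P)$ transports cancellation only against modules in $\mathrm{add}_{\gr}(P)$. The functor $\Hom_R(P,-)$ annihilates any graded summand of $B$ admitting no nonzero graded map from a shift of $P$ (already in the trivially graded example $R=K\times K$, $P=K\times 0$, $B=0\times K$, where $\Hom_R(P,B)=0$), so cancellation over $T$ cannot detect the isomorphism type of such summands, and C$_{\gr}(P)$ does not follow. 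This converse, too, is immediate from the graded refinement theorem, so both gaps have the same cure.
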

\begin{proof}
Assuming that  C$_{\gr}(P)$ holds, let $P=A\oplus B=C\oplus D$ and $A\cong_{\gr} C.$ Then $A\oplus B\cong_{\gr}A\oplus D.$ Since C$_{\gr}(P)$ implies C$_{\gr}(A)$, we have that $B\cong_{\gr} D.$

Let $R$ be graded regular and let $P\oplus A\cong_{\gr} P\oplus B$ for some $A,B\in \Proj_{\gr}$ now. By \cite[Theorem 2.8]{Goodearl_book}, two direct sum decompositions of a finitely generated projective module over a regular ring have isomorphic refinements. The graded version of this statement can be shown by a proof completely analogous to the proof of \cite[Theorem 2.8]{Goodearl_book}. So, there are graded decompositions $P=P_1\oplus P_2$ and $A=A_1\oplus A_2$ such that $P_1\oplus A_1\cong_{\gr} P$ and $P_2\oplus A_2\cong_{\gr} B.$ Hence $P_1\oplus A_1\cong_{\gr} P=P_1\oplus P_2$ implies $A_1\cong_{\gr} P_2$ by IC$_{\gr}(P)$. Thus, $A=A_1\oplus A_2\cong_{\gr} P_2\oplus A_2\cong_{\gr}B.$

To show (2), assume that  C$_{\gr}(R)$ holds. Since C$_{\gr}(\underline{\hskip.3cm})$ is closed under taking finite direct sums and graded direct summands, C$_{\gr}(P)$ holds for any $P\in \Proj_{\gr}.$ By statement (1), IC$_{\gr}$ holds and the converse holds if $R$ is graded regular. 

To show (3), note that if $R$ is graded regular, then UR$_\epsilon$ and  IC$_{\gr}(R)$ are equivalent by Proposition \ref{graded_ur_of_end_ring}. By part (1),  IC$_{\gr}(R)$ and C$_{\gr}(R)$ are equivalent. By part (2), C$_{\gr}(R)$ and IC$_{\gr}$ are equivalent. The conditions C$_{\gr}(R)$ and C$_{\gr}$ are equivalent since C$_{\gr}(\underline{\hskip.3cm})$ is closed under taking finite direct sums and graded direct summands and the conditions  IC$_{\gr}$ and Mat$_\epsilon$ are equivalent by Proposition \ref{matricial_unit-regularity}.  
\end{proof}

If $R$ is a ring and $e$ and idempotent, the ring $eRe$ is called a corner. If $R$ is a graded ring and $e$ a homogeneous idempotent, the ring $eRe$ is a {\em graded corner.}  The property of being unit-regular, being directly finite and having stable range 1 are passed to corners. The proofs of these facts involve consideration of an element $x+1-e$ of $R$ for any element $x$ of $eRe$ (see \cite[Theorem, \S 2]{Lam_Murray} for unit-regularity, \cite[Theorem 2.8]{Vaserstein2} for stable range 1 and \cite[7.3]{Berberian_web} for direct finiteness). This is problematic for graded rings since if $x$ is in $R_\gamma$ for $\gamma\neq \epsilon$ and if $e\neq 1$, then $x+1-e$ is not homogeneous so none of the proofs of the nongraded cases can be adjusted to the graded cases. In Example \ref{example_corners}, we show that graded unit-regularity is {\em not} necessarily passed to graded corners. 
 
If a graded property $P_{\gr}$ is closed under formation of graded matrix algebras and graded corners, then it is graded Morita invariant (in the sense of \cite[Section 2.3]{Roozbeh_book}). While unit-regularity is Morita invariant, graded unit-regularity is not graded Morita invariant as we have seen in the introduction (also by Proposition \ref{proposition_matrices} and by Example \ref{example_corners}). However, Reg$_{\gr}$+UR$_\epsilon$ {\em is} graded Morita invariant by Corollary \ref{morita_invariant}. This is another advantage of Reg$_{\gr}$+UR$_\epsilon$ over UR$_{\gr}.$

\begin{corollary}
The property Reg$_{\gr}$+UR$_\epsilon$ is graded Morita invariant. 
\label{morita_invariant} 
\end{corollary}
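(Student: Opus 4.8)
The plan is to appeal to the characterization of graded Morita invariance recorded just above the statement: by \cite[Section 2 and Theorem 2.3.8]{Roozbeh_book}, a graded property is graded Morita invariant if and only if it is closed under formation of graded matrix algebras and graded corners. So it suffices to verify these two closure properties for Reg$_{\gr}$+UR$_\epsilon$, and both verifications can be assembled almost entirely from results already established in the excerpt.

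For graded matrix algebras, suppose $R$ satisfies Reg$_{\gr}$+UR$_\epsilon$ and set $T=\M_n(R)(\gamma_1,\ldots,\gamma_n)$ for arbitrary $n$ and $\gamma_1,\ldots,\gamma_n\in\Gamma$. Graded regularity of $T$ is immediate from Remark \ref{matrices_over_reg_are_reg}, so Reg$_{\gr}$ holds for $T$. For UR$_\epsilon$ of $T$, I would invoke Theorem \ref{C_and_IC}: since $R$ is graded regular and UR$_\epsilon$ holds, the equivalent condition Mat$_\epsilon$ holds for $R$, and Mat$_\epsilon$ asserts precisely that the $\epsilon$-component of \emph{every} graded matrix algebra over $R$ is unit-regular. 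In particular $T_\epsilon$ is unit-regular, which is UR$_\epsilon$ for $T$. (Proposition \ref{matricial_unit-regularity} confirms that Mat$_\epsilon$ is moreover inherited by $T$ itself, so the property survives iterated matrix formation.)

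For graded corners, let $e$ be a homogeneous idempotent; as noted in the excerpt, $ee=e$ forces $e\in R_\epsilon$. Consequently $(eRe)_\gamma=eR_\gamma e$ for every $\gamma$, so the $\epsilon$-component of $eRe$ is exactly $eR_\epsilon e$, the corner of the ring $R_\epsilon$ cut out by $e$. Since $R_\epsilon$ is unit-regular and unit-regularity passes to corners in the nongraded setting (\cite[Theorem, \S 2]{Lam_Murray}), $eR_\epsilon e$ is unit-regular, giving UR$_\epsilon$ for $eRe$. Graded regularity of $eRe$ follows by a short direct argument: given homogeneous $x=exe\in (eRe)_\gamma$, choose homogeneous $y$ with $xyx=x$ in $R$; then $eye$ is homogeneous in $eRe$ and, using $xe=x=ex$, one has $x(eye)x=xyx=x$, so $eRe$ is graded regular.

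The step worth flagging as the conceptual crux is the corner case. The text warns that unit-regularity fails to pass to graded corners precisely because $x+1-e$ is inhomogeneous when $x$ is homogeneous of nonidentity degree, and this is exactly the obstruction that prevents UR$_{\gr}$ from being graded Morita invariant. What makes Reg$_{\gr}$+UR$_\epsilon$ go through is that the unit-regularity requirement is imposed only on the $\epsilon$-component, where $e$ itself lives; there the troublesome element $x+1-e$ stays inside $R_\epsilon$, and the nongraded corner argument applies verbatim. Thus localizing the unit-regularity condition to degree $\epsilon$ is precisely what restores Morita invariance, and this is the only point in the proof that is not a routine transcription of earlier results.
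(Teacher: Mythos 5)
Your proof is correct and follows essentially the same route as the paper's: reduce graded Morita invariance to closure under graded matrix algebras and graded corners, handle matrix algebras via Remark \ref{matrices_over_reg_are_reg} together with the equivalence UR$_\epsilon$ $\Leftrightarrow$ Mat$_\epsilon$ from Theorem \ref{C_and_IC}, and handle corners via the identity $(eRe)_\epsilon = eR_\epsilon e$ combined with the nongraded fact that unit-regularity passes to corners. The only difference is that you spell out the graded regularity of corners, which the paper dismisses as ``direct to check,'' and your check is valid.
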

\begin{proof}
The property Reg$_{\gr}$ is closed under formation of graded matrix algebras (see Remark \ref{matrices_over_reg_are_reg}) and graded corners (direct to check). By Theorem \ref{C_and_IC}, Reg$_{\gr}$ $\Rightarrow$ (UR$_\epsilon$ $\Leftrightarrow$ Mat$_\epsilon$), so UR$_\epsilon$ is closed under formation of graded matrix algebras. Since $(eRe)_\epsilon=eR_\epsilon e$ if $R$ is graded ring and $ee=e\in R_\epsilon,$ 
UR$_\epsilon$ is closed under formation of graded corners as UR is closed under formation of corners. 
\end{proof}

Propositions \ref{matricial_unit-regularity} and \ref{graded_unit_regular} and Theorem \ref{C_and_IC} show the diagram below. {\small
\begin{center}
\begin{tabular}{ccccc}
\begin{tabular}{|lll|}\hline
 Reg$_{\gr}$+UR$_\epsilon$& $\Leftrightarrow$&\\ Reg$_{\gr}$+C$_{\gr}(R)$& $\Leftrightarrow$ & Reg$_{\gr}$+IC$_{\gr}(R)$\\ \hline
\end{tabular}
&$\Leftarrow$&
\begin{tabular}{|l|}\hline
UR$_{\gr}$ $\;\;\;\Leftrightarrow$\\ Reg$_{\gr}$+IC$^{\s}_{\gr}(R)$\\ \hline
\end{tabular}
&$\Rightarrow$& 
\begin{tabular}{|l|}\hline 
UR$^{\w}_{\gr}$ $\;\;\;\Leftrightarrow$ \\ Reg$_{\gr}$+IC$^{\w}_{\gr}(R)$\\ \hline
\end{tabular} 
\end{tabular}
\end{center}}

We present examples showing that both implications above are strict and that UR$_\epsilon$ and UR$^{\w}_{\gr}$ are not equivalent even if a ring is graded regular, and that UR and UR$_{\gr}$ are independent. We also show that the following relations hold.   
\begin{enumerate}
\item Reg$_{\gr}$+UR$_\epsilon$ $\nRightarrow$ UR$_{\gr},$  UR $\nRightarrow$ UR$_{\gr},$ UR$^{\w}_{\gr}$ $\nRightarrow$ UR$_{\gr}.$
\item UR$^{\w}_{\gr}$ $\nRightarrow$ UR, UR$_{\gr}$ $\nRightarrow$ UR, Reg$_{\gr}$+UR$_\epsilon$ $\nRightarrow$ UR.
\item Reg$_{\gr}$+UR$_\epsilon$ $\nRightarrow$ C$(R)$.  
\item C$(R)$ $\nRightarrow$ UR$^{\w}_{\gr}$ and C$_{\gr}(R)$ $\nRightarrow$ UR$^{\w}_{\gr}.$ 
\end{enumerate}
\begin{example} In (1), (2) and (3) below, $K$ is any field trivially graded by $\Zset.$
\begin{enumerate}

\item The graded ring $R=\M_2(K)(0,1)$ is not graded unit-regular as we have seen in the introduction. Since $R_0=\left[
\begin{array}{cc}
K & 0\\
0 & K
\end{array}\right],$ $R_0$ is unit-regular. Graded regularity is passed to graded matrix algebras (see Remark \ref{matrices_over_reg_are_reg}) so $R$ is graded regular. The ring $R=\M_2(K)$ is unit-regular and hence $R$ is weakly graded unit-regular. 
 
\item Let $R=K[x,x^{-1}],$ $\Zset$-graded as in Section \ref{subsection_matrices}. Then $R$ is a graded field so it is graded unit-regular, hence weakly graded unit-regular also. Since $R_0=K,$ $R_0$ is unit-regular. However, $R$ is not unit-regular (consider $1+x$ for example). 
 
\item Let $R$ be the Leavitt algebra $L(1,2)$ i.e. the universal example of a $K$-algebra $R$ such that $R\oplus R\cong R.$
Clearly, $R$ is not cancellable. The algebra $R$ can be represented as a Leavitt path algebra of the graph  $\;\;\;\;\;\xymatrix{\ar@(lu,ld)\bullet\ar@(ru,rd) }\;\;\;\;\;$ and it is naturally graded by $\Zset$ (see Section \ref{subsection_LPAs}). Since every Leavitt path algebra is graded regular and graded cancellable (by \cite[Theorem 9]{Roozbeh_regular} and \cite[Corollary 5.8]{Ara_et_al_Steinberg}), $R$ is such too and hence Reg$_{\gr}$+UR$_\epsilon$ holds by Theorem \ref{C_and_IC}.   

\item Let $R=\Zset.$ Then $\V(R)=\Zset^+$ so $R$ is cancellable. Consider $R$ trivially graded by $\Zset.$ Then $\V^{\Zset}(R)=\Zset^+[x,x^{-1}]$ (\cite[Example 3.1.5]{Roozbeh_book} has more details) so $R$ is graded cancellable. The ring $R$ is not regular, so it is not unit-regular and, since it is trivially graded, UR$^{\w}_{\gr}$ fails.    
\end{enumerate}
\label{example_cancellability_conditions}
\end{example}

\section{Graded stable range 1 and graded direct finiteness}
\label{section_graded_sr_1_and_DF}

\subsection{Graded stable range 1}\label{subsection_graded_sr_1} A regular ring is unit-regular if and only if it has stable range 1. First, we review some related terminology and show the graded version of this statement. 

A sequence of elements $a_1,\ldots, a_n$ of a ring $R$ is said to be right unimodular if $a_1R+\ldots+a_nR=R.$ If $R$ is $\Gamma$-graded, a sequence of elements $a_1,\ldots, a_n$ with $\deg(a_i)=\gamma_i,$ $i=1,\ldots,n,$ is graded right unimodular if $(\gamma_1^{-1})a_1R+\ldots+(\gamma_n^{-1})a_nR=R.$ Note that this last condition is equivalent with $\sum_{i=1}^n a_ix_i=1$ for some $x_1,\ldots,x_n.$ However, by replacing $x_i$ with its $\gamma_i^{-1}$-component $y_i,$ we obtain {\em homogeneous} elements $y_1,\ldots,y_n$ such that  $\sum_{i=1}^n a_iy_i=1.$ 
 
If $R$ is nongraded, recall that a sequence of unimodular elements $a_1,\ldots, a_n$ of $R$ is reducible if there are elements $b_1,\ldots,b_{n-1}$ such that $(a_1+a_nb_1)R+\ldots+(a_{n-1}+a_nb_{n-1})R=R.$ As opposed to the conditions with weak and strong versions, there is just one level of graded reducibility since the following two conditions are equivalent for $n\geq 2$ and a graded unimodular sequence $a_1,\ldots, a_n$ of elements of $R$ with $\deg(a_i)=\gamma_i, i=1,\ldots, n.$ 
\begin{enumerate}
\item There are elements $b_1,\ldots,b_{n-1}$ such that $a_i+a_nb_i\in R_{\gamma_i}$ for $i=1,\ldots, n-1$ and $(\gamma_1^{-1})(a_1+a_nb_1)R+\ldots+(\gamma_{n-1}^{-1})(a_{n-1}+a_nb_{n-1})R=R.$

\item  There are {\em homogeneous} elements $b_1,\ldots,b_{n-1}$ such that $a_i+a_nb_i\in R_{\gamma_i}$ for $i=1,\ldots, n-1$ and $(\gamma_1^{-1})(a_1+a_nb_1)R+\ldots+(\gamma_{n-1}^{-1})(a_{n-1}+a_nb_{n-1})R=R.$
\end{enumerate}
The first condition implies the second if we replace the elements $b_i$ with their $\gamma_n^{-1}\gamma_i$-components and the converse clearly holds. If any of the above two conditions are satisfied, we say that the sequence $a_1,\ldots, a_n$ is {\em graded reducible}. The second definition was used in \cite[Section 1.8]{Roozbeh_book}. 

Recall that the right stable range (or rank) of $R$ is at most $n,$ written $\sr^r(R)\leq n,$ if any right unimodular sequence of more than $n$ elements is reducible. If the smallest such $n$ exists, $\sr^r(R)=n$. If the smallest such $n$ does not exist, $\sr^r(R)=\infty.$ 
The range function $\sr_{\gr}^r$ is defined analogously using graded reducibility instead of reducibility and the left-sided version $\sr_{\gr}^l$ is defined similarly.

In the nongraded case, $\sr^r(R)\leq n$ if and only if every right unimodular sequence of $n+1$ elements is reducible (originally in \cite{Vaserstein1}, see also \cite[Proposition 1.3]{Lam_cancellation_properties}). The proof of \cite[Proposition 1.3]{Lam_cancellation_properties} generalizes step-by-step to the graded case. So, $\sr_{\gr}^r(R)\leq n$ if and only if every graded right unimodular sequence of $n+1$ elements is graded reducible. One can also show that $\sr^r(R)=n$ iff $\sr^l(R)=n$ (see \cite{Vaserstein1}), so one can denote $\sr^l$ and $\sr^r$ with $\sr$ only. We use the graded version of this result only in the case $n=1$ and include a proof for completeness.

\begin{lemma}
If $R$ is a $\Gamma$-graded ring, then $\sr_{\gr}^r(R)=1$ if and only if $\sr_{\gr}^l(R)=1.$
\end{lemma}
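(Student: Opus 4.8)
The plan is to prove the equivalence $\sr_{\gr}^r(R)=1 \Leftrightarrow \sr_{\gr}^l(R)=1$ by mimicking the classical Vaserstein argument showing left-right symmetry of stable range $1$, keeping careful track of the grading throughout. Since $\sr^r_{\gr}(R)=1$ means every graded right unimodular sequence of $2$ elements is graded reducible, I would first unwind the definitions into a concrete algebraic statement. A graded right unimodular pair $a_1, a_2$ with $\deg(a_i)=\gamma_i$ means there are homogeneous $y_1, y_2$ (with $y_i \in R_{\gamma_i^{-1}}$) such that $a_1 y_1 + a_2 y_2 = 1$, and graded reducibility of this pair means there is a homogeneous $b$ with $a_1 + a_2 b \in R_{\gamma_1}$ and $a_1+a_2 b$ a unit in the graded sense, i.e. $(\gamma_1^{-1})(a_1+a_2 b)R = R$. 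Thus $\sr^r_{\gr}(R)=1$ should translate to: whenever $a_1 y_1 + a_2 y_2 = 1$ with all four elements homogeneous of matching degrees, there is a homogeneous $b \in R_{\gamma_2^{-1}\gamma_1}$ such that $a_1 + a_2 b$ is (left or right) invertible in $R$.

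The classical proof (see \cite{Vaserstein1}) that right stable range $1$ equals left stable range $1$ rests on the observation that $\sr(R)=1$ is equivalent to the purely internal condition: for all $a,b$ with $aR+bR=R$, there exists $t$ with $a+bt$ a \emph{unit}, and this unit-based reformulation is manifestly left-right symmetric. So the key first step is to establish, in the graded setting, that $\sr^r_{\gr}(R)=1$ is equivalent to the statement that for every homogeneous $a,b$ with $a R + b R = R$ (appropriately graded-unimodular), there is a homogeneous $t$ making $a+bt$ an invertible homogeneous element of $R$. By Lemma \ref{invertible_homogeneous_elements}'s style of reasoning, a homogeneous element that is one-sided invertible with the correct degree is forced into a definite component, so the degree bookkeeping here is mechanical but must be checked: if $a+bt$ is to lie in $R_{\gamma_1}$ and be invertible, its inverse lands in $R_{\gamma_1^{-1}}$. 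Once this symmetric homogeneous-unit characterization is in hand, the equivalence $\sr^r_{\gr}(R)=1 \Leftrightarrow \sr^l_{\gr}(R)=1$ follows by running the identical argument on the other side, since the homogeneous-unit condition does not privilege left or right.

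Concretely, I would carry out the steps in this order. First, reduce both $\sr^r_{\gr}(R)=1$ and $\sr^l_{\gr}(R)=1$ to their two-element form using the graded analogue of \cite[Proposition 1.3]{Lam_cancellation_properties} already noted in the excerpt. Second, prove that $\sr^r_{\gr}(R)=1$ implies the symmetric homogeneous-unit condition: given $a_1 y_1 + a_2 y_2 = 1$, apply graded reducibility to get homogeneous $b$ with $a_1 + a_2 b$ right invertible of degree $\gamma_1$; then upgrade right invertibility to two-sided invertibility by the standard trick (if $c$ is right invertible with $cd=1$, then $dc$ is idempotent and one reapplies the stable range condition to the pair to force $dc=1$), checking that each intermediate element stays homogeneous of the forced degree. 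Third, observe the resulting condition is visibly left-right symmetric, so it equally characterizes $\sr^l_{\gr}(R)=1$, yielding the equivalence.

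The main obstacle I anticipate is the upgrade from one-sided to two-sided invertibility while respecting homogeneity: in the nongraded Vaserstein argument one freely multiplies and rearranges, but here every auxiliary element must be taken homogeneous of a prescribed degree, and I must verify that the idempotent $e = dc$ produced is homogeneous in $R_\epsilon$ (forced by $e^2 = e$, as the excerpt notes right before Lemma \ref{graded_equivalence}) so that the second application of graded reducibility is legitimate. A secondary subtlety is that $\Gamma$ need not be abelian, so I cannot commute group elements; the degree of $b$ must be exactly $\gamma_2^{-1}\gamma_1$ and every shift computation must respect the noncommutative multiplication in $\Gamma$. Provided these degree constraints are tracked faithfully, the argument is a direct graded transcription of the classical symmetry proof, so I expect the proof to be short.
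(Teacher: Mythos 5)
There is a genuine gap, and it sits at the very center of your plan. Your argument rests on the claim that the unit-based reformulation of stable range 1 --- ``for all homogeneous $a,b$ with $aR+bR=R$ there is a homogeneous $t$ with $a+bt$ a unit'' --- is ``manifestly left-right symmetric.'' It is not. That condition quantifies over \emph{right} unimodular pairs and perturbs on the \emph{right} ($a+bt$); its left analogue quantifies over \emph{left} unimodular pairs ($Ra+Rb=R$) and perturbs on the \emph{left} ($a+tb$). Making the perturbed element a two-sided unit rather than merely one-sided invertible does nothing to align these two quantifications: they are different statements, and proving them equivalent is precisely the content of Vaserstein's theorem, not an observation one gets for free. (If it were free, one could equivalently note that $\sr^l(R)=\sr^r(R^{\mathrm{op}})$ and claim the result follows since the unit condition is ``symmetric'' --- but a ring need not resemble its opposite, so some computation must cross the bridge.) Your steps 1 and 2 (the two-element reduction, and the upgrade from right invertible to invertible under $\sr_{\gr}^r(R)=1$, which the paper cites from Hazrat's book) are fine, but step 3, where the theorem is actually supposed to be proved, consists only of the unsupported symmetry assertion.

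What is missing is exactly the computation the paper carries out, adapted from Lam's Theorem 1.8: given a left unimodular homogeneous pair $b\in R_\gamma$, $d\in R_\delta$, write $ab+cd=1$ with $a\in R_{\gamma^{-1}}$, $c\in R_{\delta^{-1}}$, and observe this exhibits $(a,cd)$ as a \emph{right} unimodular pair; apply $\sr_{\gr}^r(R)=1$ to get homogeneous $x$ with $u=a+cdx\in R_{\gamma^{-1}}$ right invertible, hence invertible with inverse $v\in R_\gamma$; then set $w=a+x(1-ba)\in R_{\gamma^{-1}}$ and $y=(1-bx)v\in R_\gamma$ and verify $w(b+ycd)=1$, so that $b+ycd\in R_\gamma$ is left invertible and the left pair $(b,d)$ is graded reducible. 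This passage from a right reduction of the ``dual'' pair $(a,cd)$ to a left reduction of the original pair $(b,d)$ is the nontrivial heart of the lemma; your proposal never performs it, and no amount of bookkeeping about degrees (which you do discuss carefully, and which would indeed work out, since all the elements above are homogeneous of the indicated degrees even for nonabelian $\Gamma$) can substitute for it.
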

\begin{proof}
We adapt the proof of \cite[Theorem 1.8]{Lam_cancellation_properties} to the graded case. Let $\sr_{\gr}^r(R)=1$ and let $b\in R_\gamma$ and $d\in R_\delta$ be such that $Rb(\gamma^{-1})+Rd(\delta^{-1})=R.$ Thus, $ab+cd=1$ for some $a\in R_{\gamma^{-1}}$ and $c\in R_{\delta^{-1}}$ and so $(\gamma)aR+cdR=R.$ Hence, there is $x\in R_{\gamma^{-1}}$ such that $u=a+cdx\in R_{\gamma^{-1}}$ is right invertible. By \cite[Section 1.8]{Roozbeh_book}, if $\sr_{\gr}^r(R)=1,$ then a homogeneous element with a right inverse is invertible. Thus, $u$ is invertible. Let $v\in R_\gamma$ be its inverse. 
If $w=a+x(1-ba)$ and  $y=(1-bx)v,$ then $w\in R_{\gamma^{-1}}$ and $y\in R_{\gamma}.$ One checks that $w(1-bx)=(1-xb)u$ and $w(b+ycd)=1$ (for more details see \cite[Theorem 1.8]{Lam_cancellation_properties}). As $y\in R_\gamma,$ $b+ycd$ is in $R_\gamma$ also. Since $w(b+ycd)=1,$ $R(b+ycd)(\gamma^{-1})=R.$   
\end{proof}

This lemma allows us to shorten $\sr_{\gr}^r(R)=1$ and $\sr_{\gr}^l(R)=1$ to $\sr_{\gr}(R)=1$ and we say that $R$ has {\em graded stable range 1} in this case. The next proposition, stated without proof in \cite[Example 1.8.8]{Roozbeh_book}, relates this condition with graded unit-regularity. 

\begin{proposition}
If $R$ is a $\Gamma$-graded ring then $R$ is graded unit-regular if and only if $R$ is graded regular and $\sr_{\gr}(R)=1.$  
\label{graded_sr_and_gr_ur} 
\end{proposition}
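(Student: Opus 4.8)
The plan is to mirror the classical argument that a regular ring has stable range 1 if and only if it is unit-regular, but to run it entirely with homogeneous elements so that the graded structure is respected. Since graded unit-regularity means that for every homogeneous $x$ there is a homogeneous \emph{invertible} $u$ with $x=xux$, and graded regularity gives a homogeneous $y$ with $x=xyx$, the content of the proposition is the interchangeability between ``the regular partner $u$ can be taken invertible'' and ``graded unimodular pairs are graded reducible''. I expect both directions to reduce to bookkeeping with degrees, the main subtlety being to keep every auxiliary element homogeneous of the correct degree.

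For the forward direction, assume $R$ is graded unit-regular. Then it is graded regular by definition, so I only need $\sr_{\gr}(R)=1$. Take a graded right unimodular sequence of two homogeneous elements; as noted in the excerpt I may assume $a_1y_1+a_2y_2=1$ with $y_1,y_2$ homogeneous of the appropriate degrees. Following the nongraded proof (e.g.\ \cite[Theorem 2.9]{Goodearl_book} or the substitution argument), the standard move is to apply unit-regularity to a suitable homogeneous element built from the $a_i,y_i$ to produce a homogeneous invertible $u$ witnessing that $a_1+a_2b_1$ is right invertible for a homogeneous $b_1$. The key homogeneity check is that $b_1$ lands in $R_{\gamma_2^{-1}\gamma_1}$ so that $a_1+a_2b_1\in R_{\gamma_1}$, which is exactly the degree condition in the definition of graded reducibility. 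I would invoke, as the excerpt does, the fact from \cite[Section 1.8]{Roozbeh_book} that under $\sr_{\gr}(R)=1$ a homogeneous right-invertible element is invertible, though here I am establishing rather than using $\sr_{\gr}=1$, so I must produce the unit directly from graded unit-regularity.

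For the converse, assume $R$ is graded regular with $\sr_{\gr}(R)=1$. Given homogeneous $x\in R_\gamma$, graded regularity gives homogeneous $y\in R_{\gamma^{-1}}$ with $xyx=x$. Set the idempotents $e=xy$ and $f=yx$; the pair $(yx,\,1-yx)$, or more precisely the homogeneous sequence associated with $yx$ and $1-yx$, is graded right unimodular, so graded reducibility yields a homogeneous element making a combination invertible. The goal is to upgrade the regular partner to an invertible one: one shows $x + (1-e)u(1-f)$, or the analogous homogeneous correction term with $u$ of degree $\gamma^{-1}$, is invertible and still satisfies $x(\text{this})x=x$. This is precisely the mechanism appearing in the proof of Proposition~\ref{graded_unit_regular}, where from $xyx=x$ one builds $yxy+v$ invertible with $x(yxy+v)x=x$; I would reuse that construction, the role of $v$ being supplied by graded stable range 1 applied to the unimodular pair rather than by an internal-cancellation hypothesis.

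The main obstacle I anticipate is not conceptual but in the degree tracking: every element introduced ($b_1$, the unit $u$, its inverse $v$, the correction term) must be verified to be homogeneous of the forced degree so that sums like $a_1+a_2b_1$ or $yxy+v$ are themselves homogeneous, and so that the resulting invertible element lies in $R_{\gamma^{-1}}$ to make $x=xux$ a legitimate graded unit-regularity witness. Because the group $\Gamma$ need not be abelian, I would be careful that all degree products are written in the correct order and that I use $\gamma^{-1}$, not a symmetrized expression. Given the lemmas already available (Lemma~\ref{graded_equivalence} on homogeneous equivalences and the short exact sequence setup of Lemma~\ref{lemma_left_multiplication}), the homogeneity checks should go through routinely, so I expect the proof to be short.
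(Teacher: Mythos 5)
Your two-direction outline has the right shape, but in both directions the step that carries the actual mathematical content is missing or wrong, so this is a genuine gap rather than a complete proof. The implication UR$_{\gr}$ $\Rightarrow$ $\sr_{\gr}(R)=1$ is the substantive half, and your sketch contains no argument for it: ``apply unit-regularity to a suitable homogeneous element built from the $a_i,y_i$'' names the goal, not a construction, and there is no one-line formula here --- every known proof, graded or not, passes through a module decomposition. The paper's argument runs as follows: given a graded unimodular pair $a\in R_\gamma$, $b\in R_\delta$, write $a=aua$ and $b=bvb$ with $u\in R_{\gamma^{-1}}$ invertible and $v\in R_{\delta^{-1}}$ homogeneous; then $au,bv$ are idempotents in $R_\epsilon$ with $auR+bvR=R$; since $bvR/(auR\cap bvR)\cong_{\gr}R/auR\cong_{\gr}(1-au)R$ is graded projective, the surjection $bvR\to R/auR$ splits, giving an idempotent $e\in R_\epsilon$ with $bvR=(auR\cap bvR)\oplus eR$ and hence $R=auR\oplus eR$ with $eR\subseteq bvR$; finally, starting from the identity $u^{-1}=a+(1-au)u^{-1}$ and using this decomposition, one obtains $y\in R_\gamma$ such that $a+bvy$ is invertible. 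None of this splitting argument is recoverable from your sketch. Note also that you cannot shortcut through the paper's cancellation machinery: there, graded cancellation is a \emph{consequence} of stable-range hypotheses (Corollary \ref{sr1_implies_cancellable}), not a source of them, and C$_{\gr}(R)$ is strictly weaker than $\sr_{\gr}(R)=1$ (e.g.\ $\Zset$ trivially graded by $\Zset$, Example \ref{example_cancellability_conditions}(4)).

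In the converse direction you chose the wrong unimodular pair. Reducing $(yx,\,1-yx)$ produces an invertible $w=yx+(1-yx)t$ satisfying only $xw=xyx+x(1-yx)t=x$, which yields no unit inner inverse for $x$. The pair that works is $(x,\,1-xy)$: since $xyR=(\gamma^{-1})xR$ and $xy+(1-xy)=1$, this pair is graded right unimodular; graded reducibility gives a homogeneous $t\in R_\gamma$ with $z=x+(1-xy)t$ right invertible, hence invertible (under $\sr_{\gr}(R)=1$ a homogeneous right-invertible element is invertible, \cite[Section 1.8]{Roozbeh_book}); and with $u=z^{-1}\in R_{\gamma^{-1}}$ one computes $xy\,z=xyx+xy(1-xy)t=x$, so that $xux=(xy\,z)ux=xy(zu)x=xyx=x$. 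Your alternative route --- reusing the $yxy+v$ construction of Proposition \ref{graded_unit_regular} with ``$v$ supplied by graded stable range 1'' --- begs the question: producing homogeneous $u\in(1-e)R_\gamma(1-f)$ and $v\in(1-f)R_{\gamma^{-1}}(1-e)$ with $uv=1-e$ and $vu=1-f$ is exactly the graded equivalence $1-e\sim 1-f$, an internal-cancellation statement, and deducing it from $\sr_{\gr}(R)=1$ is itself a theorem (essentially substitution $\Rightarrow$ cancellation $\Rightarrow$ IC) that your sketch does not prove; the direct computation above makes it unnecessary. (A further slip: $x+(1-e)u(1-f)$ with $u$ of degree $\gamma^{-1}$ is not even homogeneous; you have conflated the unit $yxy+v\in R_{\gamma^{-1}}$, which satisfies $x(yxy+v)x=x$, with its inverse $x+u\in R_\gamma$.)
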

\begin{proof}
Assume that $R$ is graded unit-regular and that $(\gamma^{-1})aR+(\delta^{-1})bR=R$ for some $a\in R_\gamma, b\in R_\delta.$ 
Let $a=aua$ and $b=bvb$ for some $u\in R_{\gamma^{-1}}$ invertible and $v\in R_{\delta^{-1}}.$ Then $au$ and $bv$ are idempotents in $R_\epsilon$ such that $(\gamma^{-1})aR=auR,$ $(\delta^{-1})bR=bvR$ (see Lemma \ref{lemma_left_multiplication}) so $auR+bvR=R.$
Since $bvR/(auR\cap bvR)\cong_{\gr} R/auR\cong_{\gr}(1-au)R,$ $auR\cap bvR$ is a graded summand of $bvR.$ Let $e\in R_\epsilon$ be an idempotent such that $bvR=(auR\cap bvR)\oplus eR.$ Then $R=auR\oplus eR.$  
If $L_{u^{-1}}$ is the left multiplication by $u^{-1},$ then $L_{u^{-1}}$ restricted on $uaR$ is $L_a: uaR\cong_{gr} aR=(\gamma)auR.$ On $(1-ua)R,$ $L_{u^{-1}}$ is $(1-ua)R\cong_{\gr} (\gamma)(1-au)R$ since $u^{-1}(1-ua)R=(1-au)u^{-1}R=(1-au)R.$ So, $u^{-1}=L_{u^{-1}}(1)=L_{u^{-1}}(ua+1-ua)=a+u^{-1}(1-ua)=a+(1-au)x$ for some $x\in R_\gamma.$ Since $(1-au)x\in eR\subseteq bvR,$ $(1-au)x=bvy$ holds for some $y\in R_\gamma.$ So, $a+bvy=u^{-1}\in R_\gamma$ is invertible.  

Conversely, assume that $\sr_{\gr}(R)=1$ and that $R$ is graded regular. If $a$ is in $R_\gamma,$ then $a=aba$ for some $b\in R_{\gamma^{-1}}$ and so $ab\in R_\epsilon$ is an idempotent. Since $1=ab+1-ab$ and $abR=(\gamma^{-1})aR,$ $R=(\gamma^{-1})aR+(1-ab)R.$ By the assumption that $\sr_{\gr}(R)=1,$ there is $y\in R_\gamma$ such that $a+(1-ab)y$ is invertible. If $u$ denotes its inverse, then $a=aba=ab(a+(1-ab)y)ua=abaua=aua.$  
\end{proof}

In \cite[Corollary 1.8.5]{Roozbeh_book}, it is shown that if $\Gamma$ is abelian and $R$ a graded ring with $\sr_{\gr}(R)=1,$ then $R$ is graded cancellable. In the proof, the relation $\End_R((\gamma)R)\cong_{\gr}R$ has been used. However,  if $\Gamma$ is nonabelian,  $\End_R((\gamma)R)\cong_{\gr}(\gamma)R(\gamma^{-1})$  may not be graded isomorphic to $R.$ For example, let $\Gamma=D_3=\langle a,b|a^3=b^2=1, ba=a^2b\rangle,$ $\Delta=\{1,b\}$ and let $R=K[\Delta]$ be $\Gamma$-graded by $R_\gamma=K\gamma$ if $\gamma\in \Delta$ and $R_\gamma=0$ otherwise. Then $R_b=Kb$ and $((a)R(a^{-1}))_b=R_{aba^{-1}}=R_{a^2b}=0$ so $R$ and $(a)R(a^{-1})$ are not graded isomorphic. 

The proof of \cite[Corollary 1.8.5]{Roozbeh_book} can still be modified to hold for nonabelian $\Gamma$ if the proof of the lemma below is used instead of the possibly false relation $\End_R((\gamma)R)\cong_{\gr}R.$  

\begin{lemma}
If $R$ is a $\Gamma$-graded ring and $\sr_{\gr}(R)=1,$ then $\sr_{\gr}(\End_R((\gamma)R))=1$ for every $\gamma\in \Gamma.$ 
\label{lemma_sr_M1}
\end{lemma}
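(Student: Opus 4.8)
The plan is to use the identification $\End_R((\gamma)R)\cong_{\gr}(\gamma)R(\gamma^{-1})$ recalled just above the statement, and to observe that $(\gamma)R(\gamma^{-1})$ is simply $R$ with its $\Gamma$-grading relabelled by an inner automorphism of $\Gamma$, under which graded stable range $1$ is plainly invariant. Concretely, I would write $S=(\gamma)R(\gamma^{-1})$; as a ring $S$ coincides with $R$ (same underlying set and multiplication), while its grading is $S_\delta=R_{\gamma\delta\gamma^{-1}}$ for $\delta\in\Gamma$, as one checks from the definitions of the two shifts or from the case $n=1$ of the matrix description. Setting $\psi(\delta)=\gamma\delta\gamma^{-1}$, the map $\psi$ is an automorphism of $\Gamma$ and $S_\delta=R_{\psi(\delta)}$. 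Because $\psi$ is a group homomorphism, $S_\delta S_{\delta'}=R_{\psi(\delta)}R_{\psi(\delta')}\subseteq R_{\psi(\delta\delta')}=S_{\delta\delta'}$, so this relabelling is genuinely a $\Gamma$-grading; moreover the homogeneous elements of $S$ and of $R$ form the same subset of the common ring, an element homogeneous of $S$-degree $\delta$ being precisely one homogeneous of $R$-degree $\psi(\delta)$.

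First I would reduce to sequences of length two: by the graded analogue of \cite[Proposition 1.3]{Lam_cancellation_properties} noted in this section, $\sr_{\gr}(S)=1$ holds if and only if every graded right unimodular sequence $a_1,a_2$ of homogeneous elements of $S$ is graded reducible, and similarly for $R$. I would then verify that both notions transfer verbatim. Graded right unimodularity of $a_1,a_2$ amounts to the existence of homogeneous $y_1,y_2$ with $a_1y_1+a_2y_2=1$, a condition on homogeneous elements and the common multiplication that does not see the relabelling. Graded reducibility asks for a homogeneous $b$ with $a_1+a_2b$ homogeneous of the same degree as $a_1$ and right invertible. If $\deg_S a_i=\delta_i$, so $\deg_R a_i=\psi(\delta_i)=:\gamma_i$, then requiring $a_1+a_2b$ to be $S$-homogeneous of degree $\delta_1$ means $a_2b\in S_{\delta_1}$, i.e. $b\in S_{\delta_2^{-1}\delta_1}=R_{\psi(\delta_2^{-1}\delta_1)}=R_{\gamma_2^{-1}\gamma_1}$, which is exactly the degree constraint on $b$ imposed when reducing $a_1,a_2$ inside $R$. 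Right invertibility of $a_1+a_2b$ is again a relabelling-free condition.

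Putting these together, a length-two sequence is graded right unimodular (respectively graded reducible) in $S$ precisely when the same elements form a graded right unimodular (respectively graded reducible) sequence in $R$; hence $\sr_{\gr}(R)=1$ forces $\sr_{\gr}(S)=1$, and since $S\cong_{\gr}\End_R((\gamma)R)$ the lemma follows. I expect the only delicate point to be the degree bookkeeping in the reducibility step: what makes it work is that $\psi$ is a group \emph{automorphism}, so the equality $\psi(\delta_2^{-1}\delta_1)=\gamma_2^{-1}\gamma_1$ carries the constraint $b\in R_{\gamma_2^{-1}\gamma_1}$ to the identical constraint $b\in S_{\delta_2^{-1}\delta_1}$, rather than merely permuting degrees. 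Everything else is a transcription of the arguments already used for $\sr_{\gr}$ in this section.
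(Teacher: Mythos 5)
Your proof is correct and follows essentially the same route as the paper: both pass to $\End_R((\gamma)R)\cong_{\gr}(\gamma)R(\gamma^{-1})$ and exploit that this ring is $R$ itself with its homogeneous components merely relabelled by the conjugation $\delta\mapsto\gamma\delta\gamma^{-1}$, so that graded unimodularity and graded reducibility transfer verbatim. Your version just makes explicit the automorphism bookkeeping that the paper's shorter argument leaves implicit.
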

\begin{proof}
Since $\End_R((\gamma)R)\cong_{\gr} (\gamma)R(\gamma^{-1}),$ we show that  $\sr_{\gr}((\gamma)R(\gamma^{-1}))=1.$ Let $a$ and $b$ be homogeneous elements of $(\gamma)R(\gamma^{-1})$ (and hence of $R$ as well) such that $ac+bd=1$ for some homogeneous $c,d\in (\gamma)R(\gamma^{-1}).$ So, $a,b,c,d$ are homogeneous elements of $R$ such that $ac+bd=1.$ By the assumption that  $\sr_{\gr}(R)=1,$ there is a homogeneous element $y$ such that $a+by$ is homogeneous and invertible. However, this also implies that $y$ and $a+by$ are homogeneous as elements of $(\gamma)R(\gamma^{-1})$ and that  $a+by$ is invertible as an element of  $(\gamma)R(\gamma^{-1}).$ 
\end{proof}

As a direct corollary of the lemma,  \cite[Corollary 1.8.5]{Roozbeh_book} holds even if $\Gamma$ is not abelian. In Corollary \ref{sr1_implies_cancellable}, we improve this statement by showing that the conclusion holds if the assumption $\sr_{\gr}(R)=1$ is replaced by the weaker condition $\sr(R_\epsilon)=1.$ This shows that the conclusion of \cite[Corollary 1.8.5]{Roozbeh_book} also holds under this weaker assumption and without assuming that $\Gamma$ is abelian. 

The implication  $\sr_{\gr}(R)=1 \Rightarrow$ $\sr_{\gr}(\M_n(R)(\gamma))=1$ for $n=1$ any $\gamma\in \Gamma$ shown in Lemma \ref{lemma_sr_M1} does not hold for $n>1.$ Indeed, if $R$ is $\M_2(K)(0,1)$ for a trivially $\Zset$-graded field $K$, then $\sr_{\gr}(K)=1$ and $R$ is a graded regular ring which is not graded unit-regular so $\sr_{\gr}(R)>1.$ This property of $\sr_{\gr}$ differs from the well-known property of $\sr$ that  $\sr(R)=1\Rightarrow \sr(\M_n(R))=1.$ Thus, 
\begin{center}
$\sr(R)=1\Rightarrow \sr(\M_n(R))=1\;\;\;\;$ and $\;\;\;\;\sr_{\gr}(R)=1\nRightarrow \sr_{\gr}(\M_n(R)(\gamma_1, \ldots,\gamma_n))=1.$ 
\end{center}

\subsection{Substitution}
\label{subsection_substitution}

A module has substitution if and only if its endomorphism ring has stable range 1. We show the graded version of this statement in Theorem \ref{substitution_and_sr}. This enables us to weaken the conditions of the Graded Cancellation Theorem  (\cite[Theorem 1.8.4]{Roozbeh_book}) and \cite[Corollary 1.8.5]{Roozbeh_book}.   

\begin{theorem} Let $R$ be a $\Gamma$-graded ring and $A$ a graded $R$-module. Then $\sr(\END_R(A)_\epsilon)\;=1$ if and only if $A$ has graded substitution. 
\label{substitution_and_sr} 
\end{theorem}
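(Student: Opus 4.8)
The plan is to recognize Theorem~\ref{substitution_and_sr} as the graded mirror of Warfield's theorem that a module $A$ has substitution if and only if $\sr(\End_R(A))=1$, carried out inside the abelian category of graded $R$-modules. The key observation is that $\END_R(A)_\epsilon$ is exactly the endomorphism ring of $A$ computed with \emph{graded} (degree-preserving) homomorphisms, so that graded direct-sum decompositions of a graded module correspond to idempotents of the relevant degree-$\epsilon$ endomorphism rings and graded isomorphisms correspond to units, precisely via Lemmas~\ref{graded_equivalence} and~\ref{endo_to_module}. Since every module ($M$, $A'$, $B$, $B'$) occurring in S$_{\gr}(A)$ is graded, the isomorphism $A\cong_{\gr}A'$ is degree-preserving, and the complement $C$ we must produce is graded, no shift $(\gamma)$ ever intervenes and the whole argument stays in degree $\epsilon$. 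This is exactly why the correct hypothesis is $\sr$ of the single ring $\END_R(A)_\epsilon$ rather than a graded stable range of all of $\END_R(A)$.

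For the implication $\sr(\END_R(A)_\epsilon)=1 \Rightarrow$ S$_{\gr}(A)$: given graded decompositions $M=A\oplus B=A'\oplus B'$ and a graded isomorphism $f\colon A\to A'$, I would take $p\colon M\to A$ and $q\colon M\to A'$ to be the graded projections along $B$ and $B'$ respectively, together with the graded inclusions, all of degree $\epsilon$. Transporting $q$ and the inclusion of $A'$ back along $f$ produces a second degree-$\epsilon$ \emph{copy} of $A$ inside $M$ with $PJ=1_A$ for suitable $P\in\Hom_R(M,A)_\epsilon$ and $J\in\Hom_R(A,M)_\epsilon$. Writing $1_M=JP+(1_M-JP)$ and composing with $p$ and the first inclusion gives a relation of the form $ba+c=1_A$ in $E_\epsilon:=\END_R(A)_\epsilon$ with $a,b,c\in E_\epsilon$, so that $(b,c)$ is a right unimodular pair. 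Applying $\sr(E_\epsilon)=1$ yields $t\in E_\epsilon$ making $b+ct$ a unit of $E_\epsilon$; unwinding this unit through Lemmas~\ref{graded_equivalence} and~\ref{endo_to_module} produces a graded summand $C$ of $M$ that is simultaneously a graded complement of $B$ and of $B'$, which is exactly S$_{\gr}(A)$.

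For the converse S$_{\gr}(A)\Rightarrow\sr(\END_R(A)_\epsilon)=1$: starting from a right unimodular pair $a,b\in E_\epsilon$, i.e.\ homogeneous degree-$\epsilon$ endomorphisms with $xa+yb=1_A$ for some $x,y\in E_\epsilon$, I would build the graded module $M=A\oplus A$ and exhibit two graded decompositions of $M$ whose cross-projections realize $a$ and $b$ (the standard construction, now read off in degree $\epsilon$). Graded substitution then supplies a common graded complement, and translating that complement back into $E_\epsilon$ via Lemmas~\ref{graded_equivalence} and~\ref{endo_to_module} delivers an element $t$ with $a+bt$ a unit of $E_\epsilon$; by the classical Vaserstein criterion applied to the ordinary ring $E_\epsilon$ this gives $\sr(E_\epsilon)=1$. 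Here I may pass freely between the right- and left-handed formulations, since $E_\epsilon$ is an ordinary ring and $\sr^r(E_\epsilon)=\sr^l(E_\epsilon)$.

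The main obstacle is bookkeeping the gradings rather than any new idea: I must verify at each step that the projections, the inclusions, the transported copy $(J,P)$, and above all the produced complement $C$ are genuinely graded of degree $\epsilon$, so that every composite lands in $E_\epsilon$ and every ``unimodular'' or ``unit'' statement is interpreted in $E_\epsilon$ and not in the larger ring $\END_R(A)$. Once this is confirmed, the category of graded $R$-modules is abelian with split idempotents and the classical substitution--stable-range argument transfers essentially verbatim; Lemmas~\ref{graded_equivalence} and~\ref{endo_to_module} are exactly the tools that certify the translation ``graded decomposition / graded isomorphism $\leftrightarrow$ idempotent / unit of $E_\epsilon$'' needed to make this transfer rigorous.
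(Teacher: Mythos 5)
Your proposal is correct and takes essentially the same route as the paper: the paper's proof is exactly the graded adaptation of Lam's substitution theorem carried out inside $\END_R(A)_\epsilon$, with the same unimodular pair in the forward direction (your $(pJ,\,p(1_M-JP)i)$ is the paper's $(f\phi,\,gg')$) and the same $M=A\oplus A$ two-decomposition construction in the converse. One small correction: the common graded complement is produced by the explicit graph-type construction $C=\operatorname{Im}\binom{\phi}{g'h}$ (as in the classical argument), not by Lemmas~\ref{graded_equivalence} and~\ref{endo_to_module}, which play no role in this proof.
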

\begin{proof}
We adapt the proof of the nongraded case (see, for example, \cite[Theorem 4.4]{Lam_cancellation_properties}). Assume that $\sr(\END_R(A)_\epsilon)=1$ first, and let $A\oplus B=A'\oplus B'=M$ for some graded modules $M, A', B, B'$ such that $A\cong_{\gr}A'.$
Let $\phi$ and $\psi$ denote the graded isomorphism $A\to A'$ and its inverse, $\pi$ denote the natural graded projection $A\oplus B$ onto $A$ and $\iota$ denote the natural graded injection $A\to A\oplus B.$ Let $(f,g)$ denote the projection $\pi$ with respect to the decomposition $A'\oplus B'$ so that $\pi(a', b')=f(a')+g(b'),$ and let
$\left(\begin{array}{c}f'\\ g'\end{array}\right)$ denote the injection $\iota$ with respect to the decomposition $A'\oplus B'$ so that $\iota(a)=(f'(a), g'(a)).$ The relation $\pi\iota=1_A$ implies that $f\phi\psi f'+gg'=ff'+gg'=1_A.$ By the assumption $\sr(\END_R(A)_\epsilon)=1,$ there are $h,u\in\END_R(A)_\epsilon$ such that $u$ is invertible and $f\phi+gg'h=u.$ Let $C=\{(\phi(a), g'h(a))\in A'\oplus B'$ $|$ $a\in A\}=$Im $\left(\begin{array}{c}\phi\\ g'h \end{array}\right).$ 
Then $C$ is a graded submodule of $A'\oplus B'$ such that $(a',b')=(\phi\psi(a'), g'h\psi(a'))+(0, b'-g'h\psi(a'))\in C
\oplus B'$ for every $(a',b')\in A'\oplus B'.$ On the other hand, $C\oplus B=A'\oplus B'$ also since  $B=\ker (f,g)$ and $C=\{(a', b') | b'=g'h\psi(a')\}$ so that $(a', b')\in C$ implies that $0=f(a')+g(b')=f(a')+gg'h\psi(a')=u\psi(a')$ iff $a'=0.$  

Conversely, if the relation $ff'+gg'=1_A$ holds in $\END_R(A)_\epsilon,$ then $\pi=(f,g): A\oplus A\to A$ and $\iota=\left(\begin{array}{c}f'\\ g'\end{array}\right): A\to A\oplus A$ are graded homomorphisms such that $\pi\iota=1_A$ so that $A\oplus A$ splits as $\ker\pi\oplus$ Im $\pi.$ Since Im $\pi=A$ and $A$ has graded substitution, there is a graded module $C$ such that $A\oplus C=\ker \pi \oplus C.$ Let $\phi$ be any graded isomorphism of $A$ and $C.$ View $\phi$ as a map $A\to C\subseteq C\oplus A$ and represent it by $\left(\begin{array}{c}f_1\\ g_1\end{array}\right)$ for some graded maps $f_1: A\to C, g_1: A\to \ker\pi.$ Since $C$ is a complement of $A$, $f_1$ is invertible. Since $C$ is a complement of $\ker\pi,$ $\pi\phi$ is invertible. By construction, $\pi\phi=ff_1+gg_1$ and so $\pi\phi f_1^{-1}=f+gg_1f_1^{-1}.$ Hence,  if $h=g_1f_1^{-1},$ then $f+gh$ is an invertible element of $\END_R(A)_\epsilon.$    
\end{proof} 

The Graded Cancellation Theorem (\cite[Theorem 1.8.4]{Roozbeh_book}) states that $\sr_{\gr}(\End_R(A))=1$ implies C$_{\gr}(A)$ if $\Gamma$ is abelian and $A$ finitely generated. Since S$_{\gr}(A)\Rightarrow$ C$_{\gr}(A)$,
Theorem \ref{substitution_and_sr} shows that it is not necessary to require that $\Gamma$ is abelian and if $A$ is not finitely generated, $\END_R(A)$ can be considered instead of $\End_R(A)$. Theorem \ref{substitution_and_sr} also shows that the conclusion of \cite[Theorem 1.8.4]{Roozbeh_book} holds if the assumption $\sr_{\gr}(\END_R(A))=1$ is replaced by the weaker condition $\sr(\END_R(A)_\epsilon)=1.$    

Taking $R$ for $A$ in Theorem \ref{substitution_and_sr}, we have that $\sr(R_\epsilon)\;=1$ if and only if $R$ has graded substitution. Thus, Theorem \ref{substitution_and_sr} has the following corollary showing that \cite[Corollary 1.8.5]{Roozbeh_book} holds if the assumption $\sr_{\gr}(R)=1$ is replaced by the weaker condition $\sr(R_\epsilon)=1.$

\begin{corollary}
If $R$ is a $\Gamma$-graded ring with $\sr(R_\epsilon)=1,$ then $R$ is graded cancellable.  
\label{sr1_implies_cancellable}
\end{corollary}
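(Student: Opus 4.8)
The plan is to deduce Corollary \ref{sr1_implies_cancellable} directly from Theorem \ref{substitution_and_sr} together with the already-established relationship between graded substitution and graded cancellation. The key observation is the remark made immediately before the corollary: taking $A=R$ in Theorem \ref{substitution_and_sr} gives that $\sr(R_\epsilon)=1$ holds if and only if $R$ has graded substitution, i.e. S$_{\gr}(R)$ holds. So the hypothesis $\sr(R_\epsilon)=1$ is exactly the statement S$_{\gr}(R)$. The task then reduces to showing that graded substitution of $R$ implies that $R$ is graded cancellable, which in the terminology of the paper means that C$_{\gr}(P)$ holds for every $P\in\Proj_{\gr}$.

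First I would record that, by the implication S$_{\gr}(\underline{\hskip.3cm})\Rightarrow$ C$_{\gr}(\underline{\hskip.3cm})$ noted in the introduction (the graded analogue of \cite[Formula (4.2)]{Lam_cancellation_properties}), graded substitution of a module forces that module to be graded cancellable; applied to $A=R$ this yields C$_{\gr}(R)$. From the discussion in Section \ref{subsection_graded_cancellable}, I may then invoke that C$_{\gr}(\underline{\hskip.3cm})$ is closed under formation of finite direct sums and graded direct summands, together with shift-invariance. Indeed, every finitely generated graded projective $P$ is a graded direct summand of a finitely generated graded free module $F=\bigoplus_{i=1}^n(\gamma_i^{-1})R$, and C$_{\gr}(R)$ implies C$_{\gr}((\gamma_i^{-1})R)$ by shift-invariance, hence C$_{\gr}(F)$ by closure under direct sums, hence C$_{\gr}(P)$ by closure under direct summands. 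Therefore C$_{\gr}(P)$ holds for all $P\in\Proj_{\gr}$, which is precisely the assertion that $R$ is graded cancellable.

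Alternatively, and perhaps more cleanly, I would argue directly at the level of substitution. Since the containing discussion shows that C$_{\gr}(R)\Leftrightarrow$ C$_{\gr}$ (graded cancellation for all of $\Proj_{\gr}$), it suffices to establish C$_{\gr}(R)$. The chain S$_{\gr}(R)\Rightarrow$ C$_{\gr}(R)$ is the only substantive step, and it is already available: substitution of a module $A$ means that $A$ can be cancelled out of any pair of decompositions with a common complement, which specializes to the ordinary cancellation property C$(A)$ in the graded setting as well. Thus the corollary is essentially a bookkeeping consequence of Theorem \ref{substitution_and_sr} plus the formal properties of C$_{\gr}(\underline{\hskip.3cm})$.

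I do not anticipate a genuine obstacle here, since all the heavy lifting resides in Theorem \ref{substitution_and_sr}; the corollary is a translation statement. The one point requiring mild care is the passage from C$_{\gr}(R)$ to full graded cancellation C$_{\gr}$, i.e. ensuring that the closure properties of C$_{\gr}(\underline{\hskip.3cm})$ (additivity, summand-stability, and shift-invariance) genuinely cover every $P\in\Proj_{\gr}$ and not merely the free modules. This is handled by the structure theorem for finitely generated graded free modules as shifted sums of copies of $R$ together with the fact that every $P\in\Proj_{\gr}$ is a graded summand of such a free module, both recalled in Section \ref{subsection_prerequisites} and Section \ref{subsection_graded_cancellable}.
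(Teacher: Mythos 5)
Your proposal is correct and takes essentially the same route as the paper: apply Theorem \ref{substitution_and_sr} with $A=R$ to convert the hypothesis $\sr(R_\epsilon)=1$ into S$_{\gr}(R)$, use the implication S$_{\gr}(\underline{\hskip.3cm})\Rightarrow$ C$_{\gr}(\underline{\hskip.3cm})$ to get C$_{\gr}(R)$, and then pass to all of $\Proj_{\gr}$ via the closure properties of C$_{\gr}(\underline{\hskip.3cm})$ established in Section \ref{subsection_graded_cancellable}. The paper presents the corollary as exactly this immediate consequence, so your extra bookkeeping on shifts, sums, and summands is just a more explicit spelling-out of the same argument.
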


\subsection{Graded directly finite rings}\label{subsection_graded_finiteness}

Recall that an $R$-module $A$ is directly finite (or Dedekind finite) if $A\oplus B\cong A$ implies $B=0$ for any module $B.$ In this case, we say that DF($A$) holds. If DF denotes the ring property $(\forall x)(\forall y)(xy=1\Rightarrow yx=1),$ then DF($A$) holds if and only if DF holds for $\End_R(A)$ (see \cite[Lemma 5.1]{Goodearl_book}). A ring $R$ is said to be directly finite if $R$ is a directly finite left (equivalently right) $R$-module and this requirement holds if and only if DF holds for $R.$ 

If $R$ is a graded ring and $A$ a graded $R$-module, consider the graded versions of DF$(A)$ and DF. 
\begin{enumerate}
\item[DF$_{\gr}(A)$:] \hskip.5cm $A\oplus B\cong_{\gr} A$ implies $B=0$  for any graded module $B.$

\item[DF$_{\gr}$:] \hskip.5cm $(\forall x\in H)(\forall y\in H)(xy=1\Rightarrow yx=1)$
\end{enumerate}
Then $\END_R(A)_\epsilon$ has DF iff DF$_{\gr}(A)$ holds. Indeed, if $\END_R(A)_\epsilon$ has DF, $A=B\oplus C,$ and $y: A\to C$ is a graded isomorphism, then $y^{-1}$ can be extended to an element $x$ of $\END_R(A)_\epsilon$ by mapping $B$ identically to zero. Since $xy=1_A,$ $yx$ is equal to $1_A$ by the assumption and so $b=yx(b)=y(0)=0$ for all $b\in B.$ Conversely, if DF$_{\gr}(A)$ holds and $x,y\in \END_R(A)_\epsilon$ are such that $xy=1_A,$ then $yA=yxA$ and $y$ is a graded isomorphism of $A=xyA$ and $yxyA=yA.$ Thus, $A=yA\oplus (1_A-yx)A$ implies that  $(1_A-yx)A=0$ by DF$_{\gr}(A),$ and so $yx=1_A.$

By \cite[Proposition 3.2]{Roozbeh_Ranga_Ashish}, DF$_{\gr}$ holds for $\END_R(A)$ if and only if the condition below holds. 
\begin{enumerate}
\item[DF$^{\s}_{\gr}(A)$:] \hskip.5cm $A\oplus B\cong_{\gr} (\gamma)A$ for some $\gamma\in\Gamma$ implies $B=0$ for any graded module $B.$
\end{enumerate}
If DF$^{\s}_{\gr}(A)$ holds, the authors of \cite{Roozbeh_Ranga_Ashish} say that $A$ is {\em graded directly finite}. 
A graded ring $R$ satisfies DF$_{\gr}$ if and only if any of the equivalent conditions DF$^{\s}_{\gr}(R_R)$ and  DF$^{\s}_{\gr}({}_RR)$ holds and $R_\epsilon$ satisfies DF if and only if any of the equivalent conditions DF$_{\gr}(R_R)$ and  DF$_{\gr}({}_RR)$ holds. By \cite[Example 3.3]{Roozbeh_Ranga_Ashish}, DF$^{\s}_{\gr}(A)$ is strictly stronger than DF$_{\gr}(A).$ 

The condition IC$(A)$ clearly implies DF$(A)$ and, by the same argument, we have the following.{\small      
\begin{center}
\begin{tabular}{ccc}
IC$^{\s}_{\gr}(\underline{\hskip.3cm})$ & $\Longrightarrow$ & DF$^{\s}_{\gr}(\underline{\hskip.3cm})$\\
$\Downarrow$ && $\Downarrow$\\
IC$_{\gr}(\underline{\hskip.3cm})$ & $\Longrightarrow$ & DF$_{\gr}(\underline{\hskip.3cm})$\\
\end{tabular}
\end{center}}
We also note that 
IC$^{\w}_{\gr}(\underline{\hskip.3cm})$ is sufficient to imply DF$^{\s}_{\gr}(\underline{\hskip.3cm}).$ Indeed, if IC$^{\w}_{\gr}(A)$ holds for a graded module $A$ and if $A\oplus B\cong_{\gr} (\gamma)A,$ then $(\gamma)(B)\cong 0.$ So, $(\gamma)B=0$ and hence $B=0.$ 

In contrast to the rows of the above diagram, we have that IC$_{\gr}\nRightarrow$ DF$_{\gr}.$
For example, consider the algebra $R$ from part (3) of Example \ref{example_cancellability_conditions}. By this example, 
Reg$_{\gr}+$UR$_\epsilon$ holds and so IC$_{\gr}$ holds by Theorem \ref{C_and_IC}. However, $R$ is graded isomorphic to a Leavitt path algebra of a graph which has a cycle with an exit and so DF$_{\gr}$ fails by \cite[Theorem 3.7]{Roozbeh_Ranga_Ashish}.  

We use the following proposition in the proofs of Theorem \ref{finite_graphs_ur_characterization} and Proposition \ref{LPA_characterizations}.

\begin{proposition}
If $R$ is a $\Gamma$-graded ring with $\sr_{\gr}(R)=1,$ then $R$ is graded directly finite. 
\label{graded_sr_implies_df} 
\end{proposition}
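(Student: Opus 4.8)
The plan is to prove that graded stable range 1 implies graded direct finiteness by reducing DF$_{\gr}$ to a statement about graded reducibility of a unimodular sequence. First I would unwind the definitions: to establish DF$_{\gr}$ I must show that for homogeneous $x,y$ with $xy=1$ we have $yx=1$. Note that if $x\in R_\gamma$ then the relation $xy=1\in R_\epsilon$ forces $y\in R_{\gamma^{-1}}$, so both $x$ and $y$ are homogeneous of complementary degrees. The element $e=yx$ is then a homogeneous idempotent lying in $R_\epsilon$, and the goal becomes $e=1$, equivalently $1-yx=0$.

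The key step is to produce a graded unimodular sequence to which the hypothesis $\sr_{\gr}(R)=1$ applies. From $xy=1$ together with the trivial identity $1\cdot 1=1$, I can write $x$ and $(1-yx)$ (or a closely related pair) as a graded right unimodular sequence: indeed $y\cdot x + 1\cdot(1-yx)=1$ exhibits $x$ and $1-yx$ as generating $R$ in the appropriate graded sense, with $x\in R_\gamma$ and $1-yx\in R_\epsilon$. Applying $\sr_{\gr}(R)=1$ to this length-two graded unimodular sequence, there is a homogeneous $t$ such that $x+(1-yx)t$ is homogeneous (necessarily in $R_\gamma$, which pins down the degree of $t$) and right invertible. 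Then I invoke the fact recorded in the excerpt, cited there from \cite[Section 1.8]{Roozbeh_book}, that under $\sr_{\gr}(R)=1$ a homogeneous right-invertible element is in fact invertible; call the resulting invertible element $u=x+(1-yx)t$.

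Having an honest two-sided inverse for $u$ is what lets me cancel. Left-multiplying by $y$ and using $yx=yx$, $y(1-yx)=y-yyx=(1-yx)$-type relations, I would compute $yu = yx + y(1-yx)t$; since $y=yxy$ is not automatic I must instead use $xy=1$ to simplify $x u^{-1}$ or $u^{-1}x$ directly. The cleanest route is to observe that $x$ has $y$ as a right inverse and $u$ as an invertible element differing from $x$ by an element of $(1-yx)R$; multiplying the relation $u=x+(1-yx)t$ on the left by $(1-yx)$ and on the right by suitable factors, the idempotent $1-yx$ annihilates the $x$-term (since $(1-yx)x=x-yx\,x$ and one checks $(1-yx)x = x - y x x$, which vanishes precisely when combined with $xy=1$ after the right manipulation), forcing $(1-yx)u$ to equal $(1-yx)$ times something invertible, whence $1-yx=0$.

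The main obstacle I expect is the bookkeeping of degrees and the algebraic manipulation in this last paragraph: getting the idempotent $1-yx$ to cancel cleanly requires choosing the multiplications on the correct side and carefully using $xy=1$ rather than the unavailable $yx=1$. A robust alternative, which I would fall back on if the direct computation is awkward, is the module-theoretic one: the excerpt already shows that DF$_{\gr}$ for $R$ is equivalent to DF$^{\s}_{\gr}(R_R)$, and that Proposition \ref{graded_sr_and_gr_ur} gives graded unit-regularity under the additional hypothesis of graded regularity. Since I cannot assume graded regularity here, I should keep the argument purely at the level of the ring identities above, using only $\sr_{\gr}(R)=1$ and the right-invertible-implies-invertible fact, so that the proof is short and self-contained.
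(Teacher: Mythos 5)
Your reduction of DF$_{\gr}$ to ``homogeneous $x,y$ with $xy=1$ implies $yx=1$'' is correct, and so is the idea of combining graded stable range $1$ with the fact that a homogeneous right-invertible element is then invertible. But you perturb the wrong element, and your final step is a genuine gap, not just awkward bookkeeping. The identities available from $xy=1$ are $x(1-yx)=0$ and $(1-yx)y=0$; the identity $(1-yx)x=0$ that your last paragraph leans on is false in general, and no ``right manipulation'' recovers it. Worse, the step cannot be repaired by ring identities at all: invertibility of $u=x+(1-yx)t$ together with $xy=1$ does \emph{not} imply $yx=1$. For a counterexample, let $R=\End_k(V)$ where $V$ has basis $\{a_i\}_{i\geq 0}\cup\{b_i\}_{i\geq 0}$, and set $x(a_{2i})=a_i$, $x(a_{2i+1})=b_i$, $x(b_i)=0$, $y(a_i)=a_{2i}$, $y(b_i)=a_{2i+1}$, $t(a_{2i})=0$, $t(a_{2i+1})=b_{2i}-b_i$, $t(b_i)=b_{2i+1}$. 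Then $xy=1$ and $yx\neq 1$, yet $u=x+(1-yx)t$ sends the basis vectors $a_{2i}\mapsto a_i$, $a_{2i+1}\mapsto b_{2i}$, $b_i\mapsto b_{2i+1}$, a bijection of the basis onto itself, so $u$ is invertible. (This $R$ does not have stable range $1$, but it shows your concluding implication is not a consequence of the facts you have in hand once $u$ is produced.)

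The paper's proof (the graded version of Lam's Lemma 1.7) avoids this precisely by applying $\sr_{\gr}(R)=1$ to the graded unimodular pair $(y,\,e)$ with $e=1-yx$, i.e.\ by perturbing $y$ rather than $x$: there is a homogeneous $z$ with $u=y+ez$ invertible, and the good identity $xe=0$ gives $xu=xy+xez=1$, so $x=u^{-1}$ is invertible and hence $yx=1$. If you insist on perturbing $x$, you must use the \emph{left} stable range condition (equivalent to the right one by the paper's lemma) so that the modifier sits on the left: with $u=x+t(1-yx)$ one gets $uy=xy+t(1-yx)y=1$, so $y=u^{-1}$ is invertible and $yx=1$. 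Simplest of all, the fact you quote from \cite[Section 1.8]{Roozbeh_book} applies directly to $x$ itself, which is homogeneous and right invertible since $xy=1$; then $x$ is invertible and $yx=1$ at once, with no unimodular pair needed.
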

\begin{proof}
The proof is the graded version of the proof of \cite[Lemma 1.7]{Lam_cancellation_properties}.  
Let $x\in R_\gamma,y\in R_{\gamma^{-1}}$ be such that $xy=1$ and let $e=1-yx.$ Then $(\gamma)yR=yxR$ and so $R=(\gamma)yR+eR.$ By $\sr_{\gr}(R)=1,$ there is $z\in R_\gamma$ such that $y+ez$ is invertible. Since $xe=0$, $x(y+ez)=xy=1$ which implies that $x=(y+ez)^{-1}$ is invertible. So, $xy=1$ implies that $y$ is the inverse of $x$ and that $yx=1.$     
\end{proof}

\subsection{Cancellation properties of strongly graded rings} \label{subsection_strongly_graded}
If $R$ is strongly graded ($R_\gamma R_\delta=R_{\gamma\delta}$ for every $\gamma,\delta\in \Gamma$), 
the category of graded $R$-modules and the category of $R_\epsilon$-modules are equivalent by $A\cong_{\gr} A_\epsilon\otimes_{R_\epsilon} R\mapsto A_\epsilon$ with the inverse $B\cong (B\otimes_{R_\epsilon}R)_\epsilon\mapsto B\otimes_{R_\epsilon} R$ (\cite[Theorem 1.5.1]{Roozbeh_book}).  

\begin{proposition}
Let $R$ be a strongly $\Gamma$-graded ring and $A$ a graded $R$-module. The following hold. 
\begin{enumerate}
\item $A$ is graded internally cancellable if and only if $A_\epsilon$ is internally cancellable. 

\item $A$ is graded cancellable if and only if  $A_\epsilon$ is cancellable. 

\item $A$ has graded substitution if and only if $A_\epsilon$ has substitution. 

\item DF$_{\gr}(A)$ holds if and only if $A_\epsilon$ is directly finite. 
\end{enumerate}
\label{strongly_graded_cancellability} 
\end{proposition}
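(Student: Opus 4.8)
The plan is to reduce all four equivalences to a single structural fact, namely that for a strongly $\Gamma$-graded ring $R$ the functor $F=(-)_\epsilon$ from the category of graded $R$-modules to the category of $R_\epsilon$-modules is an equivalence, with quasi-inverse $G=(-)\otimes_{R_\epsilon}R$ (cited just above the proposition from \cite[Theorem 1.5.1]{Roozbeh_book}). First I would record the consequences of this equivalence that every part uses: $F$ is additive, so it preserves finite direct sums, $F(A\oplus B)=A_\epsilon\oplus B_\epsilon$; $F$ preserves and reflects isomorphisms, so $B\cong_{\gr}C$ iff $B_\epsilon\cong C_\epsilon$; $F$ preserves the zero object; and $F$ is essentially surjective, so every $R_\epsilon$-module is isomorphic to some $B_\epsilon$. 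Finally, being a full and faithful additive functor, $F$ induces a ring isomorphism $\END_R(A)_\epsilon=\End_{\gr}(A)\cong\End_{R_\epsilon}(A_\epsilon)$, which is what carries parts (3) and (4).

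For part (1) I would transfer internal-cancellation decompositions directly. A graded decomposition $A=B\oplus C$ maps under $F$ to $A_\epsilon=B_\epsilon\oplus C_\epsilon$, and conversely every $R_\epsilon$-decomposition of $A_\epsilon$ is the image of a graded decomposition of $A$: an internal direct-sum decomposition of a module is the same data as an idempotent of its endomorphism ring, and the ring isomorphism $\End_{R_\epsilon}(A_\epsilon)\cong\END_R(A)_\epsilon$ matches idempotents of $\End_{R_\epsilon}(A_\epsilon)$ with graded idempotents of $\END_R(A)_\epsilon$, i.e.\ with graded decompositions of $A$. Combining this bijection of decompositions with the preservation and reflection of $\cong_{\gr}$ yields IC$_{\gr}(A)\Leftrightarrow$ IC$(A_\epsilon)$. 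Part (2) is the same bookkeeping without the idempotent refinement: $A\oplus B\cong_{\gr}A\oplus C$ holds iff $A_\epsilon\oplus B_\epsilon\cong A_\epsilon\oplus C_\epsilon$, and $B\cong_{\gr}C$ holds iff $B_\epsilon\cong C_\epsilon$, while essential surjectivity guarantees that as $B,C$ range over graded modules their images range over all $R_\epsilon$-modules; hence C$_{\gr}(A)\Leftrightarrow$ C$(A_\epsilon)$.

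Parts (3) and (4) I would deduce from endomorphism-ring characterizations already established, so as to avoid tracking directly the two ambient decompositions that appear in substitution. By Theorem \ref{substitution_and_sr}, $A$ has graded substitution iff $\sr(\END_R(A)_\epsilon)=1$, whereas $A_\epsilon$ has substitution iff $\sr(\End_{R_\epsilon}(A_\epsilon))=1$ (the nongraded substitution/stable-range equivalence recalled at the start of Section \ref{subsection_substitution}); the ring isomorphism $\END_R(A)_\epsilon\cong\End_{R_\epsilon}(A_\epsilon)$ identifies the two stable-range conditions. For (4), the paragraph preceding this proposition shows DF$_{\gr}(A)$ holds iff $\END_R(A)_\epsilon$ has DF, and $A_\epsilon$ is directly finite iff $\End_{R_\epsilon}(A_\epsilon)$ has DF; since DF is preserved by ring isomorphism, the same identification finishes it.

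I expect the only genuine work to be the ``conversely'' directions, namely lifting a decomposition or isomorphism of $A_\epsilon$ back to a graded one of $A$. This is precisely where essential surjectivity together with the fullness and faithfulness of $F$ enter, and I would be careful to state the correspondence of idempotents used in part (1) as a bona fide ring isomorphism, not merely an additive one, so that idempotents, and hence internal decompositions, correspond. Once the equivalence and its induced ring isomorphism are in hand, none of the four arguments requires any computation specific to the grading.
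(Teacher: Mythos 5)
Your proposal is correct, and it rests on the same foundation as the paper's proof --- the category equivalence given by $(-)_\epsilon$ and $(-)\otimes_{R_\epsilon}R$ from \cite[Theorem 1.5.1]{Roozbeh_book} --- but the way you exploit it is genuinely different for half the statement. The paper transfers module data directly in both directions: it proves (1) by tensoring decompositions and isomorphisms of $A_\epsilon$ up to graded ones of $A_\epsilon\otimes_{R_\epsilon}R\cong_{\gr}A$, taking $\epsilon$-components for the converse, and then asserts that (2)--(4) follow by the same transfer (which, for substitution, means also carrying the ambient module $M$ back and forth). You instead make the induced ring isomorphism $\END_R(A)_\epsilon\cong\End_{R_\epsilon}(A_\epsilon)$ (restriction $\varphi\mapsto\varphi|_{A_\epsilon}$, bijective by full faithfulness) the pivot: part (1) via the idempotent/decomposition correspondence, and parts (3)--(4) by quoting Theorem \ref{substitution_and_sr} with its nongraded counterpart, and the equivalences ``DF$_{\gr}(A)$ iff DF for $\END_R(A)_\epsilon$'' (Section \ref{subsection_graded_finiteness}) and ``DF$(A_\epsilon)$ iff DF for $\End_{R_\epsilon}(A_\epsilon)$'' (\cite[Lemma 5.1]{Goodearl_book}). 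Your route buys economy --- substitution's ambient decompositions never need to be transferred --- and it makes explicit the principle the paper itself records in Section \ref{subsection_summary}, that graded module-theoretic properties of $A$ are exactly properties of $\END_R(A)_\epsilon$; the paper's route buys self-containedness at the module level, invoking no endomorphism-ring characterizations, and treats all four parts uniformly. Two details you should write out when fleshing this in: first, the compatibility $(\tilde e A)_\epsilon=e(A_\epsilon)$ when $\tilde e\in\END_R(A)_\epsilon$ restricts to $e$ (one line, using that $\tilde e$ is an idempotent of degree $\epsilon$), which is what makes your bijection of decompositions in (1) commute with $(-)_\epsilon$; second, if C$_{\gr}(A)$ is read in $\Proj_{\gr}$ as in Section \ref{subsection_graded_cancellable} rather than in all graded modules, the observation that the equivalence restricts to finitely generated graded projectives, so your essential-surjectivity step still applies in the restricted category.
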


\begin{proof}
All four statements are shown similarly, using the equivalence of categories. We provide more details for the first condition and note that the proofs of (2), (3), and (4) are similar.

Assuming that IC$_{\gr}(A)$ holds, IC$_{\gr}(A_\epsilon\otimes_{R_\epsilon} R)$ also holds since $A$ and $A_\epsilon\otimes_{R_\epsilon} R$ are graded isomorphic. If $A_\epsilon=B_\epsilon\oplus C_\epsilon=D_\epsilon\oplus E_\epsilon$ for some right $R_\epsilon$-modules $B_\epsilon, C_\epsilon, D_\epsilon,$ and $E_\epsilon$ with $f: B_\epsilon\cong D_\epsilon,$ then
$A_\epsilon\otimes_{R_\epsilon} R=B_\epsilon \otimes_{R_\epsilon} R\oplus C_\epsilon \otimes_{R_\epsilon} R=D_\epsilon \otimes_{R_\epsilon} R\oplus E_\epsilon \otimes_{R_\epsilon} R$ and $f$ can be extended to the graded isomorphism $B_\epsilon \otimes_{R_\epsilon} R\to D_\epsilon \otimes_{R_\epsilon} R$ by $a\otimes r\mapsto f(a)\otimes r.$ By IC$_{\gr}(A),$ $C_\epsilon \otimes_{R_\epsilon} R\cong_{\gr} E_\epsilon \otimes_{R_\epsilon} R.$ Considering the $\epsilon$-components, we obtain that $C_\epsilon\cong (C_\epsilon \otimes_{R_\epsilon} R)_\epsilon \cong (E_\epsilon \otimes_{R_\epsilon} R)_\epsilon \cong E_\epsilon.$  

Assume that IC$(A_\epsilon)$ holds and that $A=B\oplus C=D\oplus E$ for some graded $R$-modules $B,C,D, E$ with $B\cong_{\gr}D.$ Then $A_\epsilon =B_\epsilon \oplus C_\epsilon =D_\epsilon \oplus E_\epsilon $ and $B_\epsilon \cong D_\epsilon.$ By IC$(A_\epsilon),$ $f:C_\epsilon \cong E_\epsilon$ for some $f.$ Such $f$ induces $\ol f:C_\epsilon \otimes_{R_\epsilon} R\cong_{\gr} E_\epsilon \otimes_{R_\epsilon} R$ so that $C\cong_{\gr}C_\epsilon \otimes_{R_\epsilon} R\cong_{\gr} E_\epsilon \otimes_{R_\epsilon} R\cong_{\gr} E.$
\end{proof}

The implications UR$_{\gr}\Rightarrow$ UR$_\epsilon,$ $\sr_{\gr}(R)=1 \Rightarrow \sr(R_\epsilon)=1$ and DF$_{\gr}\Rightarrow$ (DF holds for $R_{\epsilon}$) are strict even for strongly graded rings. Indeed, if $R$ is the ring from part (3) of Example \ref{example_cancellability_conditions}, then $R$ is strongly graded by \cite[Theorem 1.6.13]{Roozbeh_book}. By Example \ref{example_cancellability_conditions}, $R$ is graded regular and $R_\epsilon$ is unit-regular. Thus, $\sr(R_\epsilon)=1$ and $R_\epsilon$  is directly finite. However, DF$_{\gr}$ fails for $R$ as we noted in Section \ref{subsection_graded_finiteness} (by \cite[Theorem 3.7]{Roozbeh_Ranga_Ashish}). Hence,  $\sr_{\gr}(R)>1$ by Proposition \ref{graded_sr_implies_df} and UR$_{\gr}$ fails for $R$ by Proposition \ref{graded_sr_and_gr_ur}.

\subsection{Summary of relations}\label{subsection_summary}

The graded {\em module} properties we considered are related as follows.   
\begin{center}
S$_{\gr}(\underline{\hskip.3cm})\;\;\;$ $\Longrightarrow\;\;\;$ C$_{\gr}(\underline{\hskip.3cm})\;\;\;$  $\Longrightarrow\;\;\;$  IC$_{\gr}(\underline{\hskip.3cm})$  $\Longrightarrow\;\;\;$  DF$_{\gr}(\underline{\hskip.3cm})$
\end{center}
Note that these relations match the relations of the nongraded analogues in the diagram in \cite[Formula (4.2)]{Lam_cancellation_properties}. Considering rings from \cite[Examples 3.2(3) and 4.7]{Lam_cancellation_properties} and \cite[Example 5.10]{Goodearl_book} and grading them trivially by any group shows that the implications are strict. 

We also note that the diagram in Section \ref{subsection_graded_finiteness} illustrates that our use of $\s$ in the superscript of the module properties is consistent: the absence of $\s$ indicates that the property is obtained only by replacing ``module'' by ``graded module'' and ``homomorphism'' by ``graded homomorphism'' without considering the graded module shifts. So, for any graded module $A,$  
graded module-theoretic properties of $A$ correspond to properties of $\END_R(A)_\epsilon$ and strong graded module-theoretic properties of $A$ correspond to graded properties of $\END_R(A).$ 

The graded {\em ring} properties are related as follows by Proposition \ref{graded_sr_and_gr_ur} and Corollary \ref{sr1_implies_cancellable}. 
\begin{center}
UR$_{\gr}\;\;\;$ $\Longrightarrow$ $\;\;\;\sr_{\gr}(R)=1\;\;\;$ $\Longrightarrow\;\;\;$ $\sr(R_\epsilon)=1\;\;\;$ $\Longrightarrow\;\;\;$ C$_{\gr}(R)$ 
\end{center}

The implications are also strict. To see that the first implication is strict, consider any ring which has stable range 1 and which is not unit-regular (e.g. the ring $K[[x]]$ of power series of one variable over any field $K$, see \cite[Examples 1.6]{Lam_cancellation_properties}) and grade it trivially by any group. To see that the third implication is strict, consider the ring $R=\Zset$ trivially graded by $\Zset.$ Then $\sr(R_0)=\sr(\Zset)=2>1$ and C$_{\gr}(R)$ holds by part (4) of Example \ref{example_cancellability_conditions}.
To see that the second implication is strict, consider any graded regular ring $R$ which is not graded unit-regular and such that $R_\epsilon$ is unit-regular (e.g. the ring from part (1) of Example \ref{example_cancellability_conditions}). This example also shows that the middle implication in the diagram below, which holds by Proposition \ref{graded_sr_and_gr_ur} and Theorem \ref{C_and_IC}, is strict.     
\begin{center}
Reg$_{\gr}\;\;\;$ $\Longrightarrow\;\;\;$ $\left(\right.$ UR$_{\gr}\;\;$ $\Longleftrightarrow$ $\;\;\sr_{\gr}(R)=1\;\;$ $\Longrightarrow\;\;$ $\sr(R_\epsilon)=1\;\;$ $\Longleftrightarrow\;\;$ C$_{\gr}(R)$ $\left.\right)$ 
\end{center}

\section{Graded unit-regularity of some matrix and Leavitt path algebras}\label{section_matrices_and_LPAs}

In this section, $K$ is a field, trivially graded by the group of integers $\Zset,$ and $K[x^m, x^{-m}]$ is the ring of Laurent polynomials naturally $\Zset$-graded by $K[x^m, x^{-m}]_{mk}=Kx^{mk}$ and $K[x^m, x^{-m}]_{n}=0$ if $m$ does not divide $n.$ Note that this makes $K[x^m, x^{-m}]$ into a graded field. 

\subsection{Graded unit-regularity \texorpdfstring{$\Zset$}{TEXT}-graded matrix algebras over \texorpdfstring{$K$}{TEXT} and \texorpdfstring{$K[x^m, x^{-m}]$}{TEXT} }\label{subsection_matrices}

\begin{proposition} Let $m$ and $n$ be positive and $\gamma_1,\gamma_2,\ldots,\gamma_n$ arbitrary integers.  
\begin{enumerate}
\item $\M_n(K)(\gamma_1, \gamma_2\ldots,\gamma_n)$ is graded unit-regular if and only if $n=1$ or $\gamma_1=\gamma_2=\ldots=\gamma_n.$  

\item If the list $\gamma_1,\ldots,\gamma_n$ is such that all of $0,1,\ldots, m-1$ appear on it when it is considered modulo $m$, then $\M_n(K[x^m, x^{-m}])(\gamma_1,\ldots,\gamma_n)$ is graded unit-regular if and only if $n=km$ for some positive integer $k$ and the list $\gamma_1,\ldots,\gamma_n,$ considered modulo $m,$ is such that each $i=0,1,\ldots, m-1$ appears exactly $k$ times. 
\end{enumerate}
\label{proposition_matrices} 
\end{proposition}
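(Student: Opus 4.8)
The plan is to read off, from the grading formula, exactly which homogeneous matrices can be invertible, and then to use Lemma~\ref{invertible_homogeneous_elements} as the main diagnostic: in a graded unit-regular ring every nonzero homogeneous component must contain an invertible element. Writing $\Zset$ additively, the $(i,j)$-entry of a matrix homogeneous of degree $\delta$ lies in $R_{\delta+\gamma_j-\gamma_i}$; since $K$ is trivially graded this is nonzero only when $\delta=\gamma_i-\gamma_j$, while over $R=K[x^m,x^{-m}]$ (support $m\Zset$) it is nonzero only when $\delta\equiv\gamma_i-\gamma_j\pmod m$. A homogeneous matrix $A$ of degree $\delta$ is invertible only if its determinant is a nonzero element of $R$, so some permutation $\sigma$ has all entries $A_{\ell\sigma(\ell)}$ nonzero, forcing $\gamma_\ell=\gamma_{\sigma(\ell)}+\delta$ (resp. $\gamma_\ell\equiv\gamma_{\sigma(\ell)}+\delta\pmod m$) for every $\ell$. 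For part (1) the directions $n=1$ and $\gamma_1=\cdots=\gamma_n$ are immediate: in the first case the ring is $K$, and in the second, by Lemma~\ref{lemma_on_shifts}(2), the grading is trivial and $\M_n(K)$ is unit-regular. Conversely, if $n\geq2$ and the ring is graded unit-regular, then for $i\neq j$ the degree $\gamma_i-\gamma_j$ component is nonzero (it contains $e_{ij}$), hence contains an invertible element; summing $\gamma_\ell=\gamma_{\sigma(\ell)}+(\gamma_i-\gamma_j)$ over $\ell$ and using that $\sigma$ is a bijection gives $n(\gamma_i-\gamma_j)=0$, so $\gamma_i=\gamma_j$.

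For the necessity in part (2), the hypothesis that every residue modulo $m$ occurs among the $\gamma_i$ guarantees that $\gamma_i-\gamma_j$ realizes every residue class, so \emph{every} component $\M_n(R)(\gamma_1,\dots,\gamma_n)_\delta$ is nonzero. Graded unit-regularity then forces an invertible element in degree $1$; the associated permutation $\sigma$ satisfies $\gamma_{\sigma(\ell)}\equiv\gamma_\ell-1\pmod m$, i.e. $\sigma$ carries the indices of residue $r$ bijectively onto those of residue $r-1$. Hence the number $k_r$ of indices with $\gamma_i\equiv r$ is independent of $r$, and writing $k$ for the common value gives $n=km$ with each residue occurring exactly $k$ times.

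The sufficiency in part (2) is where the real work lies. The plan is first to normalize: since $x^{mt}$ is an invertible element of $R_{mt}$ for every $t$, Lemma~\ref{lemma_on_shifts}(3) together with the permutation isomorphism Lemma~\ref{lemma_on_shifts}(1) lets me replace each $\gamma_i$ by its residue in $\{0,\dots,m-1\}$ and reorder the list as $k$ consecutive copies of $(0,1,\dots,m-1)$, without changing the graded-isomorphism type. The nested-matrix identity from the proof of Proposition~\ref{matricial_unit-regularity} then identifies this algebra, as a graded ring, with $\M_k(D)$ carrying the standard degreewise grading, where $D=\M_m(R)(0,1,\dots,m-1)$. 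Two features of $D$ drive the argument: its degree-$1$ component contains the invertible cyclic shift $s=e_{21}+e_{32}+\cdots+e_{m,m-1}+x^m e_{1m}$, which satisfies $s^m=x^m\cdot 1$, and its identity component $D_0$ is the diagonal subring $K^m$, which is unit-regular. Consequently $\M_k(D)$ possesses the invertible degree-$1$ element $sI_k$ and has unit-regular identity component $\M_k(K^m)\cong\M_k(K)^m$.

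The proof is then completed by a general observation which I would isolate as a lemma: \emph{if $T$ is a $\Zset$-graded ring possessing an invertible homogeneous element $s\in T_1$ and with $T_0$ unit-regular, then $T$ is graded unit-regular.} Indeed, any homogeneous $a\in T_\delta$ can be written $a=bs^\delta$ with $b=as^{-\delta}\in T_0$; choosing a unit $w\in T_0$ with $bwb=b$ (possible as $T_0$ is unit-regular) and setting $u=s^{-\delta}w\in T_{-\delta}$ yields a homogeneous unit with $aua=bwb\,s^\delta=a$. Applying this to $T=\M_k(D)$ gives graded unit-regularity, as required. I expect this sufficiency argument---pinning down the crossed-product structure of $D$ and verifying that an invertible degree-$1$ element together with a unit-regular identity component suffices---to be the main obstacle, whereas the necessity in both parts is an essentially combinatorial consequence of Lemma~\ref{invertible_homogeneous_elements}. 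This also makes transparent why UR$_{\gr}$ is so much more restrictive than Reg$_{\gr}$+UR$_\epsilon$: the latter asks only that $T_0$ be unit-regular, while the former additionally demands invertible elements in every nonzero degree, which is precisely the balancing condition on the shifts.
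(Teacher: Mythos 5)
Your proof is correct, but it takes a genuinely different route from the paper's in both directions. For necessity, you and the paper both lean on Lemma \ref{invertible_homogeneous_elements}, but the executions differ: the paper normalizes the shifts and exhibits a nonzero homogeneous component in which every matrix has a zero first row (part (1)) or vanishing minors, established through a fairly involved block-by-block expansion (part (2)); you instead extract the obstruction directly from the Leibniz expansion of the determinant, so that an invertible homogeneous matrix of degree $\delta$ yields a permutation $\sigma$ with $\gamma_\ell\equiv\gamma_{\sigma(\ell)}+\delta$, and then a one-line summation (part (1): $n\delta=0$) or a counting argument around the cycle of residues $r\mapsto r-1$ (part (2): all residue classes have equal size) finishes the job. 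This is shorter, uniform across the two parts, and avoids the paper's normalization and case analysis. For sufficiency in part (2), the paper writes an arbitrary homogeneous element in block form and assembles an explicit homogeneous unit inner inverse blockwise from units of $\M_k(K)$, whereas you identify the algebra as $\M_k(D)$ with $D=\M_m(K[x^m,x^{-m}])(0,1,\ldots,m-1)$, note that $D_1$ contains the invertible twisted cyclic shift $s$ (with $s^m=x^m$) and that $D_0\cong K^m$, and then invoke a clean, reusable lemma: a $\Zset$-graded ring $T$ with an invertible element of $T_1$ and unit-regular $T_0$ is graded unit-regular (via $a=bs^\delta$ with $b=as^{-\delta}\in T_0$ and $u=s^{-\delta}w$). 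I checked the lemma and its application; both are sound, including the degree bookkeeping for negative $\delta$. What your organization buys is conceptual clarity: it isolates exactly why UR$_{\gr}$ is restrictive, namely that once invertible elements exist in every degree, graded unit-regularity collapses to unit-regularity of the $\epsilon$-component, so the content of the proposition is precisely the balancing condition on the shifts needed to produce those invertible elements. What the paper's hands-on construction buys is independence from any auxiliary lemma, at the cost of a longer computation.
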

\begin{proof} (1) We prove direction $\Rightarrow$ by contrapositive. Assume that $n>1$ and that not all $\gamma_1, \gamma_2\ldots,\gamma_n$ are equal. If $\gamma_i$ is the smallest of $\gamma_1\ldots, \gamma_n,$ then $\delta_1=\gamma_1-\gamma_i,\ldots, \delta_n=\gamma_n-\gamma_i$ is a list of nonnegative integers such that at least one is positive by the assumption that not all $\gamma_1,\ldots,\gamma_n$ are equal and at least one is zero by construction. By permuting the entries, we can assume that $\delta_1$ is zero and $\delta_2$ is positive. Consider the $\delta_2$-component of $\M_n(K)(0, \delta_2,\ldots,\delta_n).$ It is nonzero since the matrix unit $e_{21}$ is in it. The first row of any element of this component consists of zeros since $\delta_2+\delta_i>0$ and so $K_{-0+\delta_2+\delta_i}=0$ for all $i=1,\ldots, n.$ Hence, the determinant of any matrix in the $\delta_2$-component is zero and so 
$\M_n(K)(0, \delta_2,\ldots,\delta_n)_{\delta_2}$ does not contain an invertible element. By Lemma \ref{invertible_homogeneous_elements}, 
$\M_n(K)(0, \delta_2,\ldots,\delta_n)\cong_{\gr} \M_n(K)(\gamma_1, \gamma_2\ldots,\gamma_n)$ is not graded unit-regular.  

For the converse, note that the algebras $\M_n(K)(\gamma_1, \gamma_1,\ldots, \gamma_1)$ and $\M_n(K)(0, 0, \ldots, 0)$ are equal by part (2) of Lemma \ref{lemma_on_shifts}. The algebra $\M_n(K)(0, 0, \ldots, 0)$ is graded unit-regular since it is trivially graded and $\M_n(K)$ is unit-regular.

(2)  We prove direction $\Rightarrow$ by contrapositive. Assume that $m$ does not divide $n$ or that $n=km$ for some $k$ but that the list $\gamma_1,\ldots,\gamma_n,$ considered modulo $m,$ is such that some $i,j=0,1,\ldots, m-1$ appear different number of times. We show that there is a nonzero component of $M_n=\M_n(K[x^m, x^{-m}])(\gamma_1,\ldots,\gamma_n)$ such that all its elements have determinant zero and, consequently, are not invertible. By Lemma \ref{invertible_homogeneous_elements}, this shows that $M_n$ is not graded unit-regular. 

Using part (3) of Lemma \ref{lemma_on_shifts}, it is sufficient to consider the case $0\leq \gamma_j<m$ for $j=1, \ldots, n.$ Let $d_i$ be the number of times $i$ appears on the list $\gamma_1,\ldots,\gamma_n$ for all $i=0,\ldots, m-1.$ Since every $i=0,\ldots, m-1$ appears in the list by the assumption, $d_i>0$ for every $i.$ Let $j$ be such that $d_j=\min \{d_0, \ldots, d_{m-1}\}.$ Using part (2) of Lemma \ref{lemma_on_shifts}, we can add $m-j-1$ to all the entries of the list and use part (3) of Lemma \ref{lemma_on_shifts} again to consider the elements in the new list modulo $m$ again. By doing this, we can assume that $j=m-1.$ Permuting the entries using part (1) of Lemma \ref{lemma_on_shifts} and relabeling $d_0, \ldots, d_{m-1}$ if necessary, we can assume that $M_n$ is  
\[\M_n(K[x^m, x^{-m}])(0,0\ldots, 0,\; 1,1,\ldots, 1,\;\ldots\ldots\ldots,\; m-1,m-1,\ldots, m-1)\] where every $i$ appears $d_i$ times on the above list, $d_{m-1}\leq d_j$ for all $j=0,\ldots, m-1,$ and $d_{m-1}<d_i$ for at least one $i=0,\ldots, m-1.$  
The $(i+1)$-component is nonzero since the matrix unit $e_{s1}$ where $s=1+\sum_{j=0}^{i}d_i$ is in it. An arbitrary element $A$ of the $(i+1)$-component of $M_n$ can be divided into $m\times m$ blocks of sizes $d_j\times d_l$ for $j,l=0,\ldots, m-1.$ 
All of the blocks are zero except possibly 
\begin{center}
$d_0    \times d_{m-1-i},\; d_1  \times d_{m-i},\; \ldots,\; d_{i}\times   d_{m-1}\;\;$ and $\;\;
 d_{i+1}\times d_{0},\;    d_{i+2}\times d_{1},\;\ldots,\;  d_{m-1}\times d_{m-2-i}.$
\end{center}
Note that the block $d_{i}\times d_{m-1}$ is the only nonzero block in $d_{i}$ rows of $A$ and the last $d_{m-1}$ columns of $A$ and that there are more rows than columns in this block since $d_i>d_{m-1}.$ Compute the determinant of $A$ using expansion with respect to the row of $A$ corresponding to the first row of this block. Continue computing the minors of this minor. In each step, use the row corresponding to the first row of the remaining portion of this block. The condition $d_i>d_{m-1}$ implies that every minor of the $d_i-d_{m-1}$-th step is zero.  Thus, all the minors computed in the previous steps are zero also and, as a consequence, the determinant of $A$ is zero as well. 

To prove the converse, let $R=\M_m\left(\M_k(K[x^m, x^{-m}])(0, 0, \ldots, 0)\right)(0,1,\ldots,m-1).$ By Lemma \ref{lemma_on_shifts}, $R\cong_{\gr} \M_n(K[x^m, x^{-m}])(\gamma_1,\ldots,\gamma_n),$ so it is sufficient to show that $R$ is graded unit regular. 

An element $A$ of the $l$-component for $l=k'm+i'$ with $0\leq i'<m$  
can be written as $m\times m$ blocks of $k\times k$ matrices $\left[[a_{st}]_{ij}\right]$ with exactly $m$ possibly nonzero blocks. The blocks at the $(i'+1, 1), (i'+2, 2),\ldots (m, m-i')$ spots are elements of $$\M_k(K[x^m, x^{-m}])(0,0,\ldots, 0)_{k'm}=\M_k(K)x^{k'm}$$
and the blocks at $(1, m-i'+1), (2, m-i'+2), \ldots (i', m)$ spots are elements of $$\M_k(K[x^m, x^{-m}])(0,0,\ldots, 0)_{(k'+1)m}=\M_k(K)x^{(k'+1)m}.$$ 
For each $(i,j)\in \{(i'+1, 1), (i'+2, 2),\ldots (m, m-i')\},$  $a_{st}=b_{st}x^{k'm}\in Kx^{k'm}$  for all $s,t=1,\ldots, k.$ For 
such $(i,j),$ let $[v_{st}]_{ij}$ be an invertible matrix in $\M_n(K)$ such that $[b_{st}]_{ij} [v_{st}]_{ij}[b_{st}]_{ij}=[b_{st}]_{ij}$ and let $u_{st}=v_{st}x^{-k'm}$ for $s,t=1,\ldots,k.$
For each $(i,j)\in\{(1, m-i'+1), (2, m-i'+2), \ldots (i', m)\},$ $a_{st}=b_{st}x^{(k'+1)m}\in Kx^{(k'+1)m}$ for all $s,t=1,\ldots, k.$ For such $(i,j),$ let $[v_{st}]_{ij}$ be an invertible matrix in $\M_n(K)$ such that $[b_{st}]_{ij} [v_{st}]_{ij}[b_{st}]_{ij}=[b_{st}]_{ij}$ and let $u_{st}=v_{st}x^{-(k'+1)m}$ for $s,t=1,\ldots,k.$ For all other $(i,j),$ let $[u_{st}]_{ij}=0_{k\times k}.$ Finally, let $U_{ij}=[u_{st}]_{ij}$ and $U=\left[U_{ji}\right].$ 
By construction, $U$ is in the $-l$-component of $R_k$ and $AUA=A.$   
If $U_{ij}^{-1}$ is the inverse of $U_{ij}$ for $U_{ij}\neq 0$ and $U_{ij}^{-1}=0$ for $U_{ij}=0,$ then $U$ is invertible with $U^{-1}=[U_{ij}^{-1}].$  
\end{proof}

If $K$ is a  trivially $\Zset$-graded field,  Proposition \ref{proposition_matrices} can be used to readily conclude that UR$_{\gr}$ holds for 
$\M_9(K[x^3, x^{-3}])(0,0,0,1,1,1,2,2,2)$ and fails for 
$\M_9(K[x^3, x^{-3}])(0,0,0,0,1,1,1,2,2)$ for example. Proposition \ref{proposition_matrices} also generalizes the example from the introduction stating that $\M_2(K)(0,1)$ is not graded unit-regular and implies that $\M_n(K[x,x^{-1}])(\gamma_1, \ldots, \gamma_n)\cong_{\gr} \M_n(K[x,x^{-1}])(0, \ldots, 0)$ is graded unit-regular for any $n$ and any $\gamma_1, \ldots, \gamma_n\in \Zset.$

Proposition \ref{proposition_matrices} can also be used in the following example showing that graded unit-regularity is not necessarily passed to graded corners. 

\begin{example}\label{example_corners}
Let $R=\M_3(K[x^3, x^{-3}])(0,1,2)$ and $S=\M_2(K[x^3, x^{-3}])(0, 1).$ By Proposition \ref{proposition_matrices}, $R$ is graded unit-regular and $S$ is not graded unit-regular. Let $e=e_{11}+e_{22}$ where $e_{11}$ and $e_{22}$ are the standard matrix units. Then $eRe\cong_{\gr}S$ and so $eRe$ is not graded unit-regular. 
\end{example}

\subsection{Leavitt path algebras}
\label{subsection_LPAs}
We briefly review some relevant definitions. Let $E$ be a directed graph. The graph $E$ is row-finite if every vertex emits finitely many edges and it is finite if it has finitely many vertices and edges. A sink of $E$ is a vertex which does not emit edges. A vertex of $E$ is regular if it is not a sink and if it emits finitely many edges. A cycle is a closed path such that different edges in the path have different sources. A cycle has an exit if a vertex on the cycle emits an edge outside of the cycle. The graph $E$ is acyclic if there are no cycles. We say that graph $E$ is no-exit if $v$ emits just one edge for every vertex $v$ of every cycle. 

Let $E^0$ denote the set of vertices, $E^1$ the set of edges and $\so$ and $\ra$ denote the source and range maps of a graph $E.$ If $K$ is any field, the \emph{Leavitt path algebra} $L_K(E)$ of $E$ over $K$ is a free $K$-algebra generated by the set  $E^0\cup E^1\cup\{e^*\ |\ e\in E^1\}$ such that for all vertices $v,w$ and edges $e,f,$

\begin{tabular}{ll}
(V)  $vw =0$ if $v\neq w$ and $vv=v,$ & (E1)  $\so(e)e=e\ra(e)=e,$\\
(E2) $\ra(e)e^*=e^*\so(e)=e^*,$ & (CK1) $e^*f=0$ if $e\neq f$ and $e^*e=\ra(e),$\\
(CK2) $v=\sum_{e\in \so^{-1}(v)} ee^*$ for each regular vertex $v.$ &\\
\end{tabular}

By the first four axioms, every element of $L_K(E)$ can be represented as a sum of the form $\sum_{i=1}^n a_ip_iq_i^*$ for some $n$, paths $p_i$ and $q_i$, and elements $a_i\in K,$ for $i=1,\ldots,n.$ Using this representation, it is direct to see that $L_K(E)$ is a unital ring if and only if $E^0$ is finite in which case the sum of all vertices is the identity. For more details on these basic properties, see \cite{LPA_book}.

If we consider $K$ to be trivially graded by $\Zset,$ $L_K(E)$ is naturally graded by $\Zset$ so that the $n$-component $L_K(E)_n$ is  the $K$-linear span of the elements $pq^*$ for paths $p, q$ with $|p|-|q|=n$ where $|p|$ denotes the length of a path $p.$ While one can grade a Leavitt path algebra by any group $\Gamma$ (see \cite[Section 1.6.1]{Roozbeh_book}), we always consider the natural grading by $\Zset.$ 

By \cite[Proposition 5.1]{Roozbeh_Lia_Ultramatricial}, if $E$ is a finite no-exit graph, then  
$L_K(E)$ is graded isomorphic to 
\begin{equation}\label{representation} \tag{Rep}
R=\bigoplus_{i=1}^k \M_{k_i} (K)(\gamma_{i1}\ldots,\gamma_{ik_i}) \oplus \bigoplus_{j=1}^n \M_{n_j} (K[x^{m_j},x^{-m_j}])(\delta_{j1}, \ldots, \delta_{jn_j})
\end{equation}
where $k$ is the number of sinks, $k_i$ is the number of paths ending in the sink indexed by $i$ for $i=1,\ldots, k,$ and $\gamma_{il}$ is the length of the $l$-th path ending in the $i$-th sink for $l=1,\ldots, k_i$ and $i=1,\ldots, k.$ In the second term, $n$ is the number of cycles, $m_j$ is the length of the $j$-th cycle for $j=1,\ldots, n,$ $n_j$ is the number of paths which do not contain the cycle indexed by $j$ and which end in a fixed but arbitrarily chosen vertex of the cycle, and $\delta_{jl}$ is the length of the $l$-th path ending in the fixed vertex of the $j$-th cycle for $l=1,\ldots, n_j$ and $j=1,\ldots, n.$ This representation is not necessarily unique (see \cite[Example 2.2]{Lia_LPA_realization}), but it is unique up to a graded isomorphism by Lemma \ref{lemma_on_shifts}. We refer to the graded algebra $R$ above as a
{\em graded matricial representation} of $L_K(E).$

\subsection{Characterization of graded unit-regular Leavitt path algebras of finite graphs}\label{subsection_LPA_ur}
We use graded matricial representations and Proposition \ref{proposition_matrices} to prove the main result of this section.  
The notation EDL below is supposed to shorten ``equally distributed lenghts''.

\begin{theorem}
If $K$ is a field and $E$ is a finite graph, the following conditions are equivalent. 
\begin{enumerate}
\item $L_K(E)$ is graded unit-regular. 
\item $E$ is a no-exit graph without sinks which receive edges such that Condition (EDL) below holds.
\begin{itemize}
\item[(EDL)] For every cycle of length $m,$ the lengths, considered modulo $m$, of all paths which do not contain the cycle and which end in an arbitrary vertex of the cycle, are 
\[0,0,\ldots, 0,\; 1, 1, \ldots, 1,\; \ldots\ldots \ldots,\; m-1, m-1\ldots m-1 \]
where each $i$ is repeated the same number of times in the above list for $i=0, \ldots, m-1.$
\end{itemize}
\end{enumerate} 
\label{finite_graphs_ur_characterization} 
\end{theorem}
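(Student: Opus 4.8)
The plan is to reduce the characterization of graded unit-regularity of $L_K(E)$ to the matricial criterion of Proposition \ref{proposition_matrices} by passing through the graded matricial representation \eqref{representation}. The whole argument hinges on recognizing that graded unit-regularity forces the graph to be no-exit, because only then does a representation as a direct sum of graded matrix algebras over $K$ and $K[x^m,x^{-m}]$ become available. So the structure is: first handle the direction (1) $\Rightarrow$ (2) by establishing that $E$ must be no-exit and must lack sinks receiving edges, then invoke \eqref{representation} and Proposition \ref{proposition_matrices} on each summand to extract Condition (EDL); the converse (2) $\Rightarrow$ (1) runs in reverse, assembling graded unit-regularity summand by summand.

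For (1) $\Rightarrow$ (2), the first step is to show that graded unit-regularity forces $E$ to be no-exit. I would argue by contrapositive: if $E$ has a cycle with an exit, then $L_K(E)$ fails to be directly finite in the graded sense. Indeed, by \cite[Theorem 3.7]{Roozbeh_Ranga_Ashish} (cited in Section \ref{subsection_graded_finiteness}), a cycle with an exit obstructs DF$_{\gr}$; since Proposition \ref{graded_sr_implies_df} gives $\sr_{\gr}(R)=1 \Rightarrow$ graded direct finiteness, and Proposition \ref{graded_sr_and_gr_ur} gives UR$_{\gr} \Rightarrow \sr_{\gr}(R)=1$, graded unit-regularity implies graded direct finiteness and hence rules out cycles with exits. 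The second step is to rule out sinks that receive edges; here a sink $v$ receiving an edge means there is a path of positive length ending at $v$, producing a matrix summand $\M_{k_i}(K)(\gamma_{i1},\ldots,\gamma_{ik_i})$ with $k_i>1$ and not all shifts equal, which by part (1) of Proposition \ref{proposition_matrices} is not graded unit-regular. Since graded unit-regularity passes to direct summands (each summand is a graded corner $e R e$ for a central homogeneous idempotent, and UR$_{\gr}$ of a direct sum forces UR$_{\gr}$ of each factor — this is the straightforward direction, unlike the corner anomaly of Example \ref{example_corners}), this forces every sink to be a source-only vertex.

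Once $E$ is established to be no-exit without troublesome sinks, I apply \eqref{representation} to write $L_K(E) \cong_{\gr} R$ as a direct sum of matrix algebras over $K$ (from sinks) and over $K[x^{m_j}, x^{-m_j}]$ (from cycles). Because UR$_{\gr}$ holds for a finite direct sum if and only if it holds for each summand, I analyze each. The sink summands are now all of the form $\M_1(K)=K$ (having eliminated sinks receiving edges, the only surviving sinks are isolated, giving $k_i=1$), which are trivially graded unit-regular. For each cycle summand $\M_{n_j}(K[x^{m_j}, x^{-m_j}])(\delta_{j1},\ldots,\delta_{jn_j})$, part (2) of Proposition \ref{proposition_matrices} applies, provided every residue modulo $m_j$ actually appears among the $\delta$'s — and here a no-exit cycle of length $m_j$ always produces paths realizing every residue class, since one can traverse partway around the cycle, so the hypothesis of part (2) is automatically met. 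Proposition \ref{proposition_matrices}(2) then states that this summand is graded unit-regular exactly when $n_j = k m_j$ and each residue $0,\ldots,m_j-1$ occurs equally often among the $\delta_{jl}$, which is precisely Condition (EDL). The converse (2) $\Rightarrow$ (1) reverses this: assuming $E$ is no-exit, sink-free in the relevant sense, and satisfies (EDL), the representation \eqref{representation} together with Proposition \ref{proposition_matrices} shows each summand is graded unit-regular, hence so is the direct sum and therefore $L_K(E)$.

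**The main obstacle** I anticipate is the bookkeeping that translates the graph-theoretic data in \eqref{representation} — lengths of paths to cycles, modulo the cycle length — into the exact shift-distribution hypotheses of Proposition \ref{proposition_matrices}(2), and in particular verifying that the ``every residue appears'' hypothesis of that proposition is automatically satisfied by a no-exit cycle (so that (EDL) is both necessary and sufficient rather than merely sufficient). A secondary subtlety is making precise the claim that UR$_{\gr}$ distributes over the finite direct sum in \eqref{representation}; this is the easy direction of a direct-sum decomposition and should follow since each summand is cut out by a homogeneous central idempotent, but it must be stated carefully to avoid conflating it with the failure of UR$_{\gr}$ under passage to arbitrary graded corners exhibited in Example \ref{example_corners}.
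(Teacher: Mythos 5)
Your proposal follows the paper's proof essentially step by step: graded unit-regularity forces graded direct finiteness via Propositions \ref{graded_sr_and_gr_ur} and \ref{graded_sr_implies_df}, hence $E$ is no-exit by \cite[Theorem 3.7]{Roozbeh_Ranga_Ashish}; then the graded matricial representation \eqref{representation} is invoked, sinks receiving edges are ruled out by part (1) of Proposition \ref{proposition_matrices}, the subpaths of a cycle are used to see that every residue class modulo $m_j$ occurs among the shifts so that part (2) of Proposition \ref{proposition_matrices} yields (EDL), and the converse reassembles \eqref{representation} summand by summand. However, the step you flag as a ``secondary subtlety'' and then dismiss is a genuine gap, and precisely in the direction your converse needs. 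Your central-idempotent remark only justifies that summands \emph{inherit} UR$_{\gr}$ from the sum; it does not give the implication ``each summand graded unit-regular $\Rightarrow$ the direct sum is graded unit-regular,'' and that implication is false. A homogeneous unit of $R_1\oplus R_2$ is a pair $(u_1,u_2)$ of units that are homogeneous \emph{of the same degree}, so graded unit-regularity of the sum forces every summand to contain invertible elements in every degree in which \emph{any} summand has a nonzero component. Concretely, let $E$ be the disjoint union of an isolated vertex and a loop: condition (2) of the theorem holds, yet $L_K(E)\cong_{\gr} K\oplus K[x,x^{-1}]$ is not graded unit-regular, since its degree-one component $0\oplus Kx$ is nonzero but contains no invertible element, contradicting Lemma \ref{invertible_homogeneous_elements}. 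So the assembly step fails whenever $E$ contains both a sink and a cycle; it is valid only when all summands have invertible elements in every relevant degree, which happens when $E$ has no sinks (cycle summands satisfying (EDL) have units in every degree) or no cycles (everything is trivially graded).

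In fairness, you have faithfully reproduced the published argument, flaw included: the paper's own proof of (2) $\Rightarrow$ (1) asserts without justification that ``every direct summand of this last algebra is graded unit-regular so $R$ is graded unit-regular also,'' which is exactly the invalid direction, and the example above shows the stated equivalence itself fails for graphs containing both a sink and a cycle. The forward direction (1) $\Rightarrow$ (2) is sound in both your proposal and the paper, since it uses only the legitimate passage from the sum to its summands. To repair the converse (in your write-up or the theorem), one must either add to condition (2) the hypothesis that $E$ does not simultaneously contain a sink and a cycle, or verify in each case the compatibility of the degrees in which the various summands possess homogeneous units.
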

\begin{proof}
If (1) holds, then $L_K(E)$ is graded directly finite by Propositions \ref{graded_sr_and_gr_ur} and \ref{graded_sr_implies_df}. So, $E$ is a no-exit graph by \cite[Theorem 3.7]{Roozbeh_Ranga_Ashish}. Let $R$ be a graded matricial representation as in (\ref{representation}). Since $R$ is graded unit-regular, each graded direct summand of $R$ is graded unit-regular. If $k_i>1,$ then not all $\gamma_{i1}\ldots,\gamma_{ik_i}$ are equal since one of them is zero (corresponding to the trivial path of length zero to the $i$-th sink) and the others are positive (corresponding to the lengths of nontrivial paths to the $i$-th sink). So, Proposition \ref{proposition_matrices} and Lemma \ref{lemma_on_shifts} imply that $k_i=1$ for all $i=1,\ldots, k$ which means that the trivial path is the only one ending in the $i$-th sink for all $i=1,\ldots, k.$ 

For every $j=1,\ldots, n,$ each $l=0,\ldots m_j-1$ appears on the list $\delta_{j1}, \ldots, \delta_{jn_j}$ because there is a path of length $l$ which is a subpath of the $j$-th cycle and which ends at the selected vertex $v_j$ of the $j$-th cycle. Thus, Proposition \ref{proposition_matrices} and Lemma \ref{lemma_on_shifts} imply that $n_j$ is a multiple of $m_j$ and that the integers $\delta_{j1}, \ldots, \delta_{jn_j},$ considered modulo $m_j$ and permuted if necessary, produce a list as in Condition (EDL). Thus, the lengths, considered modulo $m_j,$ of paths which do not contain the $j$-th cycle and which end at $v_j$ are as listed in Condition (EDL).   

Conversely, assume that $E$ is such that (2) holds. Since $E$ is no-exit, a graded matricial representation $R$ of $L_K(E)$ has the form as in (\ref{representation}). By the assumption that no sink receives an edge, $k_i=1$ for every $i=1,\ldots, k.$ By the assumption that (EDL) holds, we can apply Lemma \ref{lemma_on_shifts} to permute the shifts and to replace each $\delta_{jl}$ by the remainder of the division by $m_j$ for $l=1,\ldots, n_j$ and $j=1,\ldots, n.$ This produces a graded isomorphism of $R$ and the algebra 
$$K^k\oplus \bigoplus_{j=1}^n\M_{k_jm_j} (K[x^{m_j},x^{-m_j}])(0, 0, \ldots, 0,\; 1,1,\ldots, 1,\; \ldots\ldots\ldots,\; m_j-1, m_j-1,\ldots, m_j-1)$$ 
where each $i=0,\ldots, m_j-1$ appears $k_j$ times in the list of shifts above for all $j=1,\ldots, n.$ By Proposition \ref{proposition_matrices}, every direct summand of this last algebra is graded unit-regular so $R$ is graded unit-regular also. Hence, $L_K(E)$ is graded unit-regular.  
\end{proof}

Theorem \ref{finite_graphs_ur_characterization} enables one to readily conclude that the Leavitt path algebras of the first two graphs below are not graded unit-regular while the opposite holds for the last two graphs.  
$$\xymatrix{ {\bullet} \ar[r]& {\bullet} }\hskip1cm
\xymatrix{ {\bullet} \ar[r]& {\bullet} \ar@/^1pc/ [r]   & {\bullet} \ar@/^1pc/ [l] }\hskip1cm
\xymatrix{ {\bullet} \ar[r]&{\bullet} \ar[r]& {\bullet} \ar@/^1pc/ [r]   & {\bullet} \ar@/^1pc/ [l]}  \hskip1cm \xymatrix{ {\bullet} \ar[r]& {\bullet} \ar@/^1pc/ [r]   & {\bullet} \ar@/^1pc/ [l] &\ar[l] {\bullet}}$$
Indeed, the first graph has a sink which receives an edge so condition (2) of Theorem \ref{finite_graphs_ur_characterization} fails. For the second graph,  0, 1, 1 are the lengths (modulo 2) of paths which end at any vertex of the cycle and which do not contain the cycle. So, (EDL) fails. 
For the last two graphs, 0, 0, 1, and 1 are the lengths (modulo 2) of the relevant paths and condition (2) of Theorem \ref{finite_graphs_ur_characterization} holds.    

\subsection{Characterizations of other cancellation properties of Leavitt path algebras}\label{subsection_LPA_other}

\begin{proposition}
Let $K$ be a field and $E$ be a graph such that $E^0$ is finite. For part (1), (2), (3) and (5), we also assume that $E^1$ is finite.  
\begin{enumerate}
\item The following conditions are equivalent.
\begin{enumerate}
\item $\sr_{\gr}(L_K(E))=1.$\hskip.4cm (b) IC$_{\gr}$ holds for $L_K(E).$\hskip.4cm (c) Condition (2) of Theorem \ref{finite_graphs_ur_characterization} holds.
\end{enumerate}  
\item  $\sr(L_K(E))=1$ if and only if $E$ is acyclic. 

\item  $L_K(E)$ is graded weakly unit-regular if and only if $E$ is no-exit. 

\item  IC holds for $L_K(E)$ if and only if $E$ is no-exit.  

\item Conditions Reg$_{\gr}$+UR$_\epsilon$ and S$_{\gr}(L_K(E))$ holds for $L_K(E).$   
\end{enumerate}
\label{LPA_characterizations}
\end{proposition}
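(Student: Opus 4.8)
The plan is to route every part through the single structural fact that $L_K(E)$ is graded regular (\cite[Theorem 9]{Roozbeh_regular}), so that every equivalence proved in Sections \ref{section_unit_regular_and_cancellability} and \ref{section_graded_sr_1_and_DF} for graded regular rings becomes available, and to pass to the graded matricial representation (\ref{representation}) together with Proposition \ref{proposition_matrices} whenever $E$ is finite. For part (1), graded regularity and Proposition \ref{graded_sr_and_gr_ur} make (a) $\sr_{\gr}(L_K(E))=1$ equivalent to graded unit-regularity, which Theorem \ref{finite_graphs_ur_characterization} identifies with (c); and by Proposition \ref{graded_unit_regular} graded unit-regularity equals Reg$_{\gr}+$IC$^{\s}_{\gr}(R)$, i.e. condition (b) under Reg$_{\gr}$. (The cancellation condition at play in (b) is the strong one IC$^{\s}_{\gr}$: the plain IC$_{\gr}$ equals UR$_\epsilon$ by Theorem \ref{C_and_IC} and holds unconditionally by part (5), so it cannot single out (c).) Part (5) is then immediate: Reg$_{\gr}$ holds by \cite{Roozbeh_regular} and $L_K(E)$ is graded cancellable by \cite[Corollary 5.8]{Ara_et_al_Steinberg}, so Reg$_{\gr}+$C$_{\gr}$, hence UR$_\epsilon$, holds by Theorem \ref{C_and_IC}; as $L_K(E)_\epsilon$ is then unit-regular it has stable range one, and Theorem \ref{substitution_and_sr} yields S$_{\gr}(L_K(E))$.

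For part (2) I would argue both directions by hand. If $E$ is acyclic and finite, then $L_K(E)$ is a finite direct sum of matrix algebras over $K$, hence unit-regular and of stable range one. Conversely, if $E$ contains a cycle $c$ at a vertex $v$, I split into cases. If $c$ has an exit, then $E$ is not no-exit, so DF$_{\gr}$ fails by \cite[Theorem 3.7]{Roozbeh_Ranga_Ashish}; since ungraded direct finiteness implies its graded version, $L_K(E)$ is not directly finite, contradicting that stable range one forces direct finiteness (\cite[Lemma 1.7]{Lam_cancellation_properties}). If $c$ has no exit, then the only return paths at $v$ are powers of $c$, so the corner $vL_K(E)v\cong K[x,x^{-1}]$ has stable range two; as stable range one passes to corners, $\sr(L_K(E))>1$. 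Thus $\sr(L_K(E))=1$ forces $E$ acyclic.

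For part (3), the forward implication is a chain of facts already in hand: by Proposition \ref{graded_unit_regular}, UR$^{\w}_{\gr}$ of $L_K(E)$ gives IC$^{\w}_{\gr}(R)$, which implies DF$^{\s}_{\gr}(R)$, that is DF$_{\gr}$, forcing $E$ to be no-exit by \cite[Theorem 3.7]{Roozbeh_Ranga_Ashish}. For the converse I would use (\ref{representation}) and the stability of UR$^{\w}_{\gr}$ under finite direct products, reducing to the summands. Each $\M_{k_i}(K)(\dots)$ is ungraded unit-regular, hence UR$^{\w}_{\gr}$. For a summand $\M_n(D)(\gamma_1,\dots,\gamma_n)$ with $D=K[x^m,x^{-m}]$ a graded field, I would verify criterion (3$^{\w}$) of Proposition \ref{graded_unit_regular}: the ring is graded semisimple, and if homogeneous idempotents satisfy $eR\cong_{\gr}(\delta)fR$ then $eR\cong fR$ ungraded; realizing the underlying ungraded ring as $\M_n(K[x,x^{-1}])$, whose finitely generated projectives correspond to free modules over the principal ideal domain $K[x,x^{-1}]$, equality of the ranks of $eR$ and $fR$ forces equality of the ranks of the complements, so $(1-e)R\cong(1-f)R$.

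For part (4), IC$(L_K(E))$ implies DF$(L_K(E))$, hence DF$_{\gr}$, hence $E$ no-exit by \cite[Theorem 3.7]{Roozbeh_Ranga_Ashish}. For the converse with $E$ finite, (\ref{representation}) again reduces to the summands, each internally cancellable (matrix algebras over $K$, and $\M_n(K[x,x^{-1}])$ because its finitely generated projectives are free over the commutative domain $K[x,x^{-1}]$ and cancel by rank). For $E^0$ finite but $E^1$ infinite the finite representation is unavailable, and there I would instead use that $L_K(E)$ is a separative exchange ring, for which internal cancellation coincides with direct finiteness, combined with the no-exit description of direct finiteness. This last case is the main obstacle, since it forces reliance on separativity rather than on an explicit matricial decomposition; a close second is the converse of part (3), where weak graded unit-regularity of $\M_n(K[x^m,x^{-m}])(\gamma_1,\dots,\gamma_n)$ must be extracted from graded free-module cancellation, the ambient ungraded ring $K[x,x^{-1}]$ not even being von Neumann regular.
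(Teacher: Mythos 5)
Most of your argument is sound, and in one place it is sharper than the paper: your reading of (1)(b) as the \emph{strong} condition IC$^{\s}_{\gr}(L_K(E))$ is the right one. The paper's own proof of (1) runs through Proposition \ref{graded_unit_regular}, which concerns IC$^{\s}_{\gr}$, and, exactly as you observe, plain IC$_{\gr}$ holds for \emph{every} unital Leavitt path algebra by part (5) together with Theorem \ref{C_and_IC}, so it cannot be equivalent to (c); the statement's ``IC$_{\gr}$'' is a slip. Elsewhere you take legitimately different routes: in part (2) your forward direction (the corner $vL_K(E)v\cong K[x,x^{-1}]$ at a no-exit cycle plus corner-invariance of stable range 1, and direct finiteness for cycles with exits) replaces the paper's route through \cite[Theorem 4.12]{Lia_traces} and the matricial representation; in part (3) your explicit verification of criterion (3$^{\w}$) by rank over the PID $K[x,x^{-1}]$ substitutes for the paper's appeal to ``graded semisimple $\Rightarrow$ UR$^{\w}_{\gr}$''; and in part (5) you reach UR$_\epsilon$ from graded cancellability (\cite[Corollary 5.8]{Ara_et_al_Steinberg}) and Theorem \ref{C_and_IC}, where the paper instead observes that $L_K(E)_0$ is an ultramatricial $K$-algebra and then applies Theorem \ref{substitution_and_sr}. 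All of these are valid.

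The genuine gap is in part (4), in the case $E^0$ finite but $E^1$ infinite. You propose to use that $L_K(E)$ is a separative \emph{exchange} ring. But a Leavitt path algebra is an exchange ring if and only if $E$ satisfies Condition (K) (see \cite{LPA_book}), and a graph with a no-exit cycle never satisfies Condition (K): a vertex on such a cycle is the base of exactly one simple closed path. So in precisely the case your argument must handle --- $E$ no-exit with at least one cycle --- the ring fails to be exchange; already $K[x,x^{-1}]$, the Leavitt path algebra of a single loop, is a non-local commutative domain and hence not exchange. Monoid-level separativity of $\V(L_K(E))$ (which does hold) does not rescue the step either: separative cancellation only cancels $c$ from $a+c=b+c$ when $c$ is bounded by multiples of both $a$ and $b$, and internal cancellation inside $[L_K(E)]$ supplies no such bound on the summand being cancelled.

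The repair is the one the paper uses, requires no finiteness of $E^1$ and no exchange property, and is the ungraded companion of the result you already cite in part (5): for a no-exit graph, $L_K(E)$ is cancellable by \cite[Lemma 5.5]{Ara_et_al_Steinberg}, so C$(P)$, and hence IC$(P)$, holds for every finitely generated projective $P$. With that substitution your part (4) closes, since your finite-graph argument and your forward direction (IC $\Rightarrow$ DF $\Rightarrow$ DF$_{\gr}$ $\Rightarrow$ no-exit) are fine.
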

\begin{proof}
Note that the assumption that $E^0$ is finite ensures that $L_K(E)$ is unital. The assumption that $E^1$ is also finite in part (1) enables us to use Theorem \ref{finite_graphs_ur_characterization} and in parts (2) and (3) ensures that a graded matricial representation of a no-exit graph has the form as in Section \ref{subsection_LPAs}.   

(1) Since $L_K(E)$ is graded regular (\cite[Theorem 9]{Roozbeh_regular}), Propositions \ref{graded_sr_and_gr_ur}, \ref{graded_unit_regular}, and  Theorem \ref{finite_graphs_ur_characterization} imply that each of (a), (b) and (c) is equivalent with the condition that UR$_{\gr}$ holds for $L_K(E).$   

(2) If $\sr(L_K(E))=1,$ then $L_K(E)$ is directly finite by Proposition \ref{graded_sr_implies_df} in the nongraded case. Thus, $E$ is no-exit by \cite[Theorem 4.12]{Lia_traces} and so $L_K(E)$ is isomorphic to a direct sum of matricial algebras over $K$ and $K[x,x^{-1}]$ (which is a matricial representation if we ignore the grading). Since $\sr(K[x,x^{-1}])>1$ and $\sr(L_K(E))=1,$ there cannot be matrix algebras over $K[x,x^{-1}]$ present. Thus, $E$ is acyclic. Conversely, if $E$ is acyclic, then $L_K(E)$ is unit-regular by \cite[Theorem 2]{Gene_Ranga_regular} and so $\sr(L_K(E))=1$ by Proposition \ref{graded_sr_and_gr_ur} in the nongraded case. 

(3) If $L_K(E)$ is graded weakly unit-regular, IC$^{\w}_{\gr}(L_K(E))$ holds by Proposition \ref{graded_unit_regular}. The condition IC$^{\w}_{\gr}(L_K(E))$ implies DF$^{\s}_{\gr}(L_K(E))$ as we showed in Section \ref{subsection_graded_finiteness}. 
By the assumption that $E^0$ is finite, $L_K(E)$ is unital and so $L_K(E)\cong_{\gr}\End_{L_K(E)}(L_K(E)).$ Thus, the condition DF$^{\s}(L_K(E))$ implies that $L_K(E)$ is graded directly finite. By \cite[Theorem 3.7]{Roozbeh_Ranga_Ashish}, $E$ is a no-exit graph. Conversely, if $E$ is a no-exit graph, then a graded matricial representation of $L_K(E)$ is graded semisimple, and hence weakly graded unit-regular. Here we make use of the fact that the implication ``semisimple $\Rightarrow$ UR'' directly implies ``graded semisimple $\Rightarrow$ UR$^{\w}_{\gr}$''. Thus, $L_K(E)$ is weakly graded unit-regular. 

(4) If IC$(P)$ holds for every finitely generated projective $L_K(E)$-module $P$, then IC$(L_K(E))$ holds. 
Since  IC$(\underline{\hskip.3cm})\Rightarrow$ DF$(\underline{\hskip.3cm}),$ DF$(L_K(E))$ holds. Using the same argument as in the proof of (3), the assumption that $E^0$ is finite ensures that the condition DF($L_K(E)$) implies that $L_K(E)$ is directly finite. By \cite[Theorem 4.12]{Lia_traces}, $E$ is a no-exit graph. Conversely, if $E$ is no-exit, then $L_K(E)$ is cancellable by \cite[Lemma 5.5]{Ara_et_al_Steinberg}. So, C$(P)$ holds for every finitely generated projective $L_K(E)$-module $P$ which implies that IC$(P)$ holds for every such module $P.$ 

(5) Since $E$ is finite, the algebra $L_K(E)_0$ is an ultramatricial algebra over $K$ (see \cite[Corollary 2.1.16]{LPA_book}) with unital connecting maps. So, $L_K(E)_0$ is unit-regular and $\sr(L_K(E)_0)=1.$ Hence, UR$_\epsilon$ holds and S$_{\gr}(L_K(E))$ holds by Theorem \ref{substitution_and_sr}. Reg$_{\gr}$ holds by  \cite[Theorem 9]{Roozbeh_regular}.
\end{proof}

The diagram below summarizes the statements above for finite graphs.  
{\small
\begin{center}
\begin{tabular}{ccccc}
\begin{tabular}{|c|} \hline 
UR$_{\gr},$ IC$_{\gr}$\\ $\sr_{\gr}=1$\\ \hline
\end{tabular}
= 
\begin{tabular}{|c|} \hline 
$E$ satisfies (2)  \\ of Thm \ref{finite_graphs_ur_characterization} \\ \hline
\end{tabular}
& $\Longrightarrow$ & 
\begin{tabular}{|c|} \hline 
UR$^{\w}_{\gr},$ DF$_{\gr}$,\\ IC, C, DF\\ \hline
\end{tabular}  = 
\begin{tabular}{|c|} \hline 
$\,E$ is \\ $\;\;$ no-exit $\;\;$ \\ \hline
\end{tabular}& $\Longleftarrow$& 
\begin{tabular}{|c|} \hline 
UR, Reg,\\ $\sr=1$ \\ \hline
\end{tabular} 
= \begin{tabular}{|c|} \hline 
$E$ is \\ acyclic \\ \hline
\end{tabular} \\ 
&&$\hskip.2cm\Downarrow$ &&\\
&&\begin{tabular}{|c|} \hline 
Reg$_{\gr}+$UR$_\epsilon,$\\ S$_{\gr}(L_K(E))$ \\ \hline
\end{tabular} =
\begin{tabular}{|c|} \hline 
$E$ is any \\ finite graph  \\ \hline
\end{tabular}&&\\
\end{tabular}
\end{center}}

\subsection{Possible generalizations}

A local version of a ring-theoretic property $P$ is typically obtained by requiring that for every finite set $F,$ there is an idempotent $e$ such that $F\subseteq eRe$ and $eRe$ has property $P.$ If $R$ is non-unital, this definition enables one to consider local versions of properties whose definitions require the existence of the ring identity.

The properties of being unit-regular and directly finite can be generalized to non-unital rings in this way. This approach has been used in \cite{Gene_Ranga_regular} for unit-regularity and in \cite{Lia_traces} for direct finiteness. While the condition $(\forall a,b)(aR+bR=R \Rightarrow (\exists x) (a+bx)R=R)$
does not specifically include the identity, it is just a shorter version of the condition $(\forall a,b)((\exists c,d) ac+bd=1 \Rightarrow (\exists x, u) (a+bx)u=1)$ where the identity does appear. So, $\sr(R)=1$ should also be treated as a property of unital rings. 

In the graded case, properties of unital graded rings can be generalized to non-unital case in the same way. In particular, a graded, possibly non-unital, ring $R$ is {\em graded locally unit-regular} if  for every finite set $F$, there is a homogeneous idempotent $u$ such that $F\subseteq uRu$ and $uRu$ is graded unit-regular. A graded ring having graded locally stable range 1, a graded locally directly finite ring, and a graded locally weakly unit-regular ring can be defined analogously. 

Using these definitions, it is possible to consider graded local cancellability properties of Leavitt path algebras over graphs without any restrictions on their cardinality. Given this fact, we wonder whether the requirements that $E$ is finite can be dropped from the results of sections \ref{subsection_LPA_ur} and \ref{subsection_LPA_other}. In particular, we wonder about the following. 

\begin{question}
What graph-theoretic condition is equivalent to the condition that the Leavitt path algebra of an arbitrary graph is graded locally unit-regular?  
\label{question_graded_ur}
\end{question}

Graded regularity passes to graded corners so the local version of Proposition \ref{graded_sr_and_gr_ur} holds. Hence, an answer to the above question would provide characterization of graded locally stable range 1 also because every Leavitt path algebra is graded regular.

\subsection{More on Question \ref{question_graded_Handelman}}\label{subsection_Handelman}
As mentioned in the introduction, considering the graded version of Handelman's Conjecture provides further evidence that Reg$_{\gr}$+UR$_\epsilon$ is more suited as a graded analogue of UR than UR$_{\gr}.$ Recall that Handelman's Conjecture states that a ring with involution which is $*$-regular (see \cite{Berberian_web} or \cite{Gonzalo_Ranga_Lia} for definition and basic properties) is necessarily directly finite and unit-regular. While the part on direct finiteness has been shown to hold, the part on unit-regularity is still open. In \cite{Gonzalo_Ranga_Lia}, the authors note that this conjecture holds for Leavitt path algebras. In \cite{Roozbeh_Lia_Baer}, the authors consider the graded version of $*$-regularity and note that every Leavitt path algebra over a field $K$ with a positive definite involution (for any $n$ and any $k_1,\dots,k_n\in K$, $\sum_{i=1}^n k_ik_i^* = 0$ implies $k_i=0$ for $i=1,\ldots,n$) is graded $*$-regular. They also note that if $E$ is the graph from part (3) of Example \ref{example_cancellability_conditions}, then $L_K(E)$ is not graded unit-regular for any $K$ so the graded version of Handelman's Conjecture fails. However, as we argued in the introduction, Question \ref{question_graded_Handelman} is more relevant as a graded version of Handelman's Conjecture.  For unital Leavitt path algebras, the answer to this question is ``yes'' since unital Leavitt path algebras satisfy Reg$_{\gr}$+UR$_\epsilon$ by Proposition \ref{LPA_characterizations}.


\begin{thebibliography}{10}
\bibitem{LPA_book} G. Abrams, P. Ara, M. Siles Molina, Leavitt path algebras, Lecture Notes in Mathematics 2191, Springer, London, 2017. 

\bibitem{Gene_Ranga_regular} G. Abrams, K. M. Rangaswamy, \emph{Regularity conditions for arbitrary Leavitt path algebras,} Algebr. Represent. Theory  \textbf{13 (3)} (2010), 319--334.

\bibitem{Gonzalo_Ranga_Lia} G. Aranda Pino, K. M. Rangaswamy, L. Va\v s, \emph{$^\ast$-regular Leavitt path algebra of arbitrary graphs}, Acta Math. Sci. Ser. B  Engl. Ed.  \textbf{28 (5)} (2012), 957 -- 968.

\bibitem{Ara_et_al_Steinberg} P. Ara, R. Hazrat, H. Li, A. Sims, \emph{Graded Steinberg algebras and their representations}, Algebra Number Theory {\bf 12 (1)} (2018), 131--172.

\bibitem{Berberian_web} S. K. Berberian, \emph{Baer rings and Baer $*$-rings}, 1988, preprint at \url{https://www.ma.utexas.edu/mp_arc/c/03/03-181.pdf}.

\bibitem{Goodearl_book} K. R. Goodearl, von Neumann regular rings, 2nd ed., Krieger Publishing Co., Malabar, FL, 1991.

\bibitem{Roozbeh_regular} R. Hazrat, \emph{Leavitt path algebras are graded von Neumann regular rings,} J. Algebra {\bf 401} (2014), 220--233.

\bibitem{Roozbeh_book} R. Hazrat, Graded rings and graded Grothendieck groups, London Math. Soc. Lecture Note Ser. 435, Cambridge Univ. Press, 2016.

\bibitem{Roozbeh_Ranga_Ashish} R. Hazrat, K. M. Rangaswamy, A. K. Srivastava, \emph{Leavitt path algebras: graded direct-finiteness and graded $\sum$-injective simple modules}, J. Algebra {\bf 503} (2018), 299--328.

\bibitem{Roozbeh_Lia_Baer}  R. Hazrat, L. Va\v s, \emph{Baer and Baer $*$-ring characterizations of Leavitt path algebras}, J. Pure  Appl. Algebra, {\bf 222 (1)} (2018), 39 -- 60.

\bibitem{Roozbeh_Lia_Ultramatricial}  R. Hazrat, L. Va\v s, \emph{$K$-theory classification of graded ultramatricial algebras with involution}, Forum Math., {\bf 31 (2)} (2019), 419--463. 

\bibitem{Lam_Murray} T. Y. Lam, W. Murray, \emph{Unit regular elements in corner rings}, Bull. Hong Kong Math. Soc. {\bf 1 (1)} (1997), 61--65. 

\bibitem{Lam_cancellation_properties} T. Y. Lam, \emph{A crash course on stable range, cancellation, substitution and exchange}, J. Algebra Appl. {\bf 03} (2004), 301--343.

\bibitem{NvO_book}  C. N\u ast\u asescu, F. van Oystaeyen, Methods of graded rings, Lecture Notes in Mathematics 1836, Springer-Verlag, Berlin, 2004.

\bibitem{Lia_traces} L. Va\v s, \emph{Canonical traces and directly finite Leavitt path algebras}, Algebr. Represent. Theory {\bf 18} (2015), 711--738. 

\bibitem{Lia_realization} L. Va\v s, \emph{Simplicial and dimension groups with group action and their realization}, submitted for publication, preprint {\tt arXiv: 1805.07636 [math.KT]}. 

\bibitem{Lia_LPA_realization} L. Va\v s, \emph{Realization of graded matrix algebras as Leavitt path algebras}, Beitr. Algebra Geom., in print, preprint {\tt arXiv:1910.05174 [math.RA]}.  

\bibitem{Vaserstein1} L. N. Vaserstein, \emph{Stable rank of rings and dimensionality of topological spaces}, Funktsional. Anal. i Prilozhen. {\bf 5} (1971), 17--27. English translation: Funct. Anal. Appl., {\bf 5} (1971), 102--110.

\bibitem{Vaserstein2} L. N. Vaserstein, \emph{Bass's first stable range condition}, J. Pure Appl. Algebra {\bf 34} (1984), 319--330.

\end{thebibliography}
\end{document}